\newtheorem{theorem}{Theorem}[section]
\newtheorem{corollary}[theorem]{Corollary}
\newtheorem{definition}[theorem]{Definition}
\newtheorem{lemma}[theorem]{Lemma}
\newtheorem{proposition}[theorem]{Proposition}
\newtheorem{remark}[theorem]{Remark}
\newtheorem{example}[theorem]{Example}
\newenvironment{proof}{\begin{trivlist}\item[]{\it Proof.}}
{\hfill$\square$\end{trivlist}}
\newcommand{\specificthanks}[1]{\@fnsymbol{#1}}
\def\cocoa{{\hbox{\rm C\kern-.13em o\kern-.07em C\kern-.13em o\kern-.15em A}}}
\def\mz{{\mathbb{Z}}}
\def\mn{{\mathbb{N}}}
\def\mq{{\mathbb{Q}}}
\def\mr{{\mathbb{R}}}
\def\mb{{\mathbb{B}}}
\def \bx{{\pmb x}}
\def\bn{{\pmb n}}
\def\bu{{\pmb u}}
\def\bv{{\pmb v}}
\def\b0{{\pmb 0}}
\def\ba{{\pmb a}}
\def\bE{{\pmb E}}
\def\bd{{\pmb d}}
\def\bw{{\pmb w}}
\def\rma{{\mathbb{T}}}
\def\zma{\mathbb{Z}_{\max}}
\def\genps{generalized powers }
\def\gp{GP}
\def\QC{{QC }}
\def\diag{{\Delta}}
\def\gann{{\overline{Ann}}}
\def\<{\langle}
\def\>{\rangle}
\let\@fnsymbol\@alph
\begin{document}

\title{Prime congruences of additively idempotent semirings and a Nullstellensatz for tropical polynomials}
\author{D\'aniel Jo\'o\thanks{This research was partially supported by 
National Research, Development and Innovation Office,  NKFIH K 119934, NKFIH PD 121410 and the exchange project "Combinatorial ring theory" betwen the Bulgarian and Hungarian Academies of Sciences.}\ \textsuperscript{, }\thanks{R\'enyi Institute of Mathematics, Hungarian Academy of Sciences, Budapest, Hungary, Email: joo.daniel@renyi.mta.hu }
 \ and Kalina Mincheva
 \thanks{Yale University, New Haven, CT 06511, USA, Email: kalina.mincheva@yale.edu}
 }

\date{}
\maketitle 
{\small \begin{center} 
\end{center}
}


\begin{abstract} 
A new definition of prime congruences in additively idempotent semirings is given using twisted products. This class turns out to exhibit some analogous properties to the prime ideals of commutative rings. In order to establish a good notion of radical congruences it is shown that the intersection of all primes of a semiring can be characterized by certain twisted power formulas. A complete description of prime congruences is given in the polynomial and Laurent polynomial semirings over the tropical semifield $\rma$, the semifield $\zma$ and the two element semifield $\mb$. The minimal primes of these semirings correspond to monomial orderings, and their intersection is the congruence that identifies polynomials that have the same Newton polytope. It is then shown that the radical of every finitely generated congruence in each of these cases is an intersection of prime congruences with quotients of Krull dimension $1$. An improvement of a result from \cite{BE13} is proven which can be regarded as a Nullstellensatz for tropical polynomials.
\end{abstract}

2010 MSC: 14T05 (Primary); 16Y60 (Primary); 12K10 (Secondary); 06F05 (Secondary)

Keywords: idempotent semirings, tropical polynomials


\section{Introduction}\label{sec:intro}

Tropical geometry, that is geometry over the tropical semiring $\rma =\mathbb{R}_{\max} = \{ \mathbb{R} \cup \{-\infty\}, \max, + \}$, is an area that recently has received a lot of interest and attention and has applications not just to algebraic geometry, but also to intersection theory, mirror symmetry and mathematical biology. 

Two of the semifields that we take into account in this paper - $\mathbb{B}$ and $\zma$ - are of interest to arithmetic geometry. They are key to the development of the semiring approach to characteristic one geometry taken up in \cite{Les12}, \cite{CC13} and \cite{CC14}. The semifield $\zma$ is central to the theory in \cite{CC13} which aims at finding a correct framework for characteristic one geometry that is in congruence with the original idea of J. Tits \cite{Tit56}.

Classically, a tropical variety (as defined in \cite{MS} and \cite{Mik06}) is a balanced polyhedral complex. 
However, recently there has been a lot of work aiming at finding the appropriate definition of a tropical scheme. The authors in \cite{GG13} and \cite{MR14} endow varieties defined over an idempotent semiring with tropical scheme structure. The set of points of this variety - called the bend loci - is defined by polynomial equations coming from a certain congruence relation. A different approach was taken in \cite{IR14}, where so-called supertropical structures were studied in order to establish the Zariski correspondence between congruences of tropical polynomials and algebraic sets.

In the case of idempotent semirings congruences are a more natural object to consider than ideals. Even though the fundamental objects of classical algebraic geometry are the prime ideals of commutative rings, ideals of semirings do not fulfill the same role as they are no longer in bijection with the congruences of the base structure. 

It is a natural approach then, which was taken up in \cite{BE13}, to try to transfer the notion of primeness to congruences in a way that the resulting structures exhibit nice properties and analogies with classical algebraic geometry. A possibility, which was investigated in \cite{Les12}, is to require that in the quotient by a prime congruence there are no zero divisors. The main drawback of this approach is that the prime property of a congruence solely depends on the equivalence class of the $0$ element (i.e. the kernel of the congruence), which in general contains little information about the congruence itself. For example, in a Laurent polynomial semiring over a semifield the kernel of every congruence is just $\{0\}$ (see Proposition \ref{prop:kernels}). A stricter way to define primes, as in \cite{BE13} and \cite{Lor12} is to require that their quotients are cancellative semirings, i.e. $ab = ac$ implies $a=0$ or $b=c$.  While this certainly is a narrower class, congruences with this property fail to be intersection indecomposable in general, making it difficult to treat them analogously to the primes of ring theory. Moreover, most structures that are of interest to us will contain infinitely long chains of congruences with cancellative quotients (see Corollary \ref{cor: infinitechains}), hence they do not provide a good notion of Krull dimension.

\par\smallskip
In our approach, so called twisted products of pairs of elements are used to define prime congruences. The twisted product of two ordered pairs $(a,b)$ and $(c,d)$ is the ordered pair $(ac+bd,ad+bc)$. The key heuristic is provided by the fact that in ring theory an ideal $P$ is prime if and only if for any elements $a \not\equiv b\;\textrm{mod}\;P$ and $c \not\equiv d\;\textrm{mod}\;P$ we have $ad+bc \not\equiv ac+bd\;\textrm{mod}\;P$. Following this characterization we define primes to be the congruences that do not contain twisted product of pairs that lie outside the congruence. To relate this notion to the above mentioned studies we show in Theorem \ref{thm: ind QC are prime} that congruences that are prime in our sense are precisely the intersection indecomposables with cancellative quotients.  
A first natural objective in studying prime congruences is to describe the set $Rad(I)$, defined as the intersection of all primes that contain the congruence $I$. In order to do this we introduced certain twisted power formulas called {\it generalized powers} for ordered pairs, and showed in Theorem \ref{thm:radical} that the elements of a pair are congruent in $Rad(I)$ precisely when some generalized power of that pair lies in $I$. \par\smallskip
Our next goal was to understand the prime congruences of the polynomial and Laurent polynomial semirings over the semifields $\mb$, $\zma$ and $\rma$. In all of these cases minimal primes turn out to correspond to monomial orderings. Applying a result of Robbiano from \cite{Rob85} that classifies monomial orderings, it can be then shown that every prime congruence of these semirings can be described by a certain {\it defining  matrix}, whose number of rows will equal the dimension of the quotient by that prime. As a consequence the dimension of a $k$-variable polynomial or Laurent polynomial semiring is $k$ over $\mb$ and $k+1$ over $\rma$ or $\zma$. This result meets our intuitive expectations, since the semifield $\mb$ is of dimension $0$ and the semifields $\zma$ and $\rma$ are of dimension $1$. Furthermore, using this description of prime congruences we show that two polynomials with coefficients in $\mb$ are congruent in every prime if and only if their Newton polytopes are the same. Consequently, the quotient of the polynomial algebra over $\mb$ by the intersection of all prime congruences (i.e. the radical of the trivial congruence) can be described as the semiring of lattice polytopes with the sum of two polytopes being the convex hull of their union and the product the Minkowski sum. Similar descriptions can be given in all of the other studied cases.
\par\smallskip
We note that the points of the tropical affine space $\rma^n$ can be identified with prime congruences of $\rma[\bx]$, whose quotient algebra is $\rma$. We call these {\it geometric congruences} and study them in Section \ref{sec:Nullst}, where we aim at understanding solutions of finite sets of tropical polynomial equations. 
With the above identification tropical varieties can be thought of as the set of geometric congruences containing a fixed bend congruence (in the sense of \cite{GG13} and \cite{MR14}).
\par\smallskip
A key component of the Nullstellensatz of classical algebraic geometry is that in a polynomial ring over a field every radical ideal is the intersection of maximal ideals (i.e. it is a Jacobson ring). One can not expect this to hold for congruences of polynomial semirings, since there are very few maximal congruences. However, one obtains an analogous result if the maximal congruences are replaced with prime congruences with $1$ dimensional quotient. In Theorem \ref{thm: 1dim} (i) it is shown that for any finitely generated congruence $E$ in a polynomial or Laurent polynomial semiring over $\mb$, $\zma$ or $\rma$, $Rad(E)$ is the intersection of the primes that contain $E$ and have a quotient with dimension at most $1$. \par\smallskip

Finally, we apply the methods developed to prove Theorem \ref{thm: 1dim} (i) to improve a Nullstellensatz type result from \cite{BE13}. In their approach one thinks of the elements of the k-variable semiring $\rma[\bx]$ as functions on the set $\rma^k$, and for a congruence $E$ denotes by $V(E)$ the subset of $\rma^k$ where every congruent pair from $E$ gives the same value. On the other hand for a subset $H$ of $\rma^k$ they denote by $\bE(H)$ the congruence that identifies polynomials that agree on every point of $H$. In this terminology the aim of a "tropical Nullstellensatz" is to describe the set $\bE(V(E))$ for a finitely generated congruence $E$. To achieve this in \cite{BE13} a set denoted by $E_+$ is defined using formulas that are similar to our generalized powers and it is shown that $E \subseteq E_+ \subseteq \bE(V(E))$ and $V(E) = V(E_+)$; moreover, a certain limit construction is given to describe the set $\bE(V(E))$ in general. We improved this result in Theorem \ref{thm: 1dim} (ii) by showing that in fact for any finitely generated congruence we have $E_+ = \bE(V(E))$ and consequently the set $E_+$ is always a congruence. We regard this result as our version of a Nullstellensatz for tropical polynomials. \par\smallskip

This paper is organized as follows. In Section \ref{sec:primes} we define the main objects that we work with throughout the paper, including the notion of prime property for idempotent semirings. We conclude the section by showing that a congruence is prime if and only if it is intersection indecomposable and its quotient is cancellative. Section \ref{sec:radicals} contains our results regarding the radical of congruences, in particular its description using generalized powers. In Section \ref{sec:polysemirings} we give a full description of the primes of the polynomial and Laurent polynomial semirings over $\mb$, $\zma$ and $\rma$, along with some related results such as calculating the dimension in each case. Section \ref{sec:Nullst} contains our results regarding finitely generated congruences and the improvement of the "tropical Nullstellensatz" from \cite{BE13}.

\par
\subsection*{Acknowledgements} We would like to thank Diane Maclagan, Felipe Rinc\'on and Jeffrey Giansiracusa for their insightful comments and interesting discussions. \par
\par\bigskip

\section{Prime congruences of semirings}\label{sec:primes}

In this paper by a {\it semiring} we mean a commutative semiring with multiplicative unit, that is a nonempty set $R$ with two binary operations $(+,\cdot)$ satisfying:

\begin{itemize}
\item[(i)] $(R,+)$ is a commutative monoid with identity element $0$
\item[(ii)] $(R,\cdot)$ is a commutative monoid with identity element $1$
\item[(iii)] For any $a,b,c \in R$: $a(b+c) = ab+ac$
\item[(iv)] $1 \neq 0$ and $a\cdot 0 = 0$ for all $a \in R$
\end{itemize}

A {\it semifield} is a semiring in which all nonzero elements have multiplicative inverse. We will denote by $\mb$ the semifield with two elements $\{1,0\}$, where $1$ is the multiplicative identity, $0$ is the additive identity and $1+1 = 1$. The {\it tropical semifield}  $\rma$ - sometimes also denoted by $\mr_{\max}$ - is defined on the set $\{-\infty\} \cup \mr$, by setting the $+$ operation to be the usual maximum and the $\cdot$ operation to be the usual addition, with $-\infty$ playing the role of the $0$ element. In this paper we will use the exponential notation $t^c ,\; c\in \mr$ for the elements of $\rma$, allowing us to write $1 = t^0$ for the multiplicative identity element and $0$ for the additive identity element. The semifield $\zma$ is just the subsemifield of integers in $\rma$. \par\smallskip
A polynomial (resp. Laurent polynomial) ring with variables $\bx = (x_1,\dots,x_k)$ over a semifield $F$ is the semiring, denoted by $F[\bx]$ (resp. $F(\bx)$),  whose elements are formal linear combinations of the monomials $\{x_1^{n_1}...x_k^{n_k}\mid\;n_i \in \mn\}$ (resp. $\{x_1^{n_1}...x_k^{n_k}\mid\;n_i \in \mz\}$) with coefficients in $F$, with addition and multiplication being defined in the usual way. For an integer vector $\bn = (n_1,\dots,n_k)$ we will use the notation $\bx^\bn = x_1^{n_1}...x_k^{n_k}$.\par\smallskip
As usual, an {\it ideal} in the semiring $R$ is just a subsemiring that is closed under multiplication by any element of $R$. Congruences of semirings are just operation preserving equivalence relations.

\begin{definition}{\rm
A {\it congruence} $I$ of the  semiring $R$ is a subset of $R \times R$ satisfying
\begin{itemize}
\item[(C1)] For $a \in R$, $(a,a) \in I$
\item[(C2)] $(a,b) \in I$ if and only if $(b,a) \in I$
\item[(C3)] If $(a,b) \in I$ and $(b,c) \in I$ then $(a,c) \in I$
\item[(C4)] If $(a,b) \in I$ and $(c,d) \in I$ then $(a+c,b+d) \in I$
\item [(C5)] If $(a,b) \in I$ and $(c,d) \in I$ then $(ac,bd) \in I$
\end{itemize}
}
\end{definition}

The unique smallest congruence is the diagonal of $R \times R$ which is denoted by $\diag$, also called the {\it trivial congruence}. $R \times R$ itself is the {\it improper congruence} the rest of the congruences are called {\it proper}. Quotients by congruences can be considered in the usual sense, the quotient semiring of $R$ by the congruence $I$ is denoted by $R/I$. The {\it kernel} of a congruence is just the equivalence class of the $0$ element. Note that kernels do not determine the congruences, for instance non-trivial congruences can have $\{0\}$ as their kernel. The kernel of a congruence is always an ideal, and when we say that the kernel of a congruence is generated by some elements, we will mean it is generated as an ideal by those elements. We will say that the kernel of a congruence is trivial if it equals $\{0\}$.\par\smallskip 
As usual, if $\varphi:R_1\rightarrow R_2$ is a morphism of semirings, and $I$ is a congruence of $R_2$, the preimage of $I$ is the congruence $\varphi^{-1}(I)=\{(\alpha_1,\alpha_2)\in R_1\times R_1\mid (\varphi(a_1),\varphi(a_2))\in I\}$. By the {\it kernel of a morphism} $\varphi$ we mean the preimage of the trivial congruence $\varphi^{-1}(\diag)$, it will be denoted by $Ker(\varphi)$. If $R_1$ is a subsemiring of $R_2$ then the restriction of a congruence $I$ of $R_2$ to $R_1$ is $I|_{R_1}=I\cap R_1 \times R_1$.\par\smallskip
By a $\mb$-algebra we simply mean a commutative semiring with idempotent addition (that is $a+a = a, \forall a$).  Throughout this section $A$ denotes an arbitrary $\mb$-algebra.
Note that the idempotent addition defines an ordering via \[a \geq b \iff a+b = a.\]
Elements of $A \times A$ are called {\it pairs}. We denote pairs by Greek letters, and denote the coordinates of the pair $\alpha$ by $\alpha_1,\alpha_2$. The {\it twisted product} of the pairs $\alpha = (\alpha_1, \alpha_2)$ and $\beta = (\beta_1, \beta_2)$ is $(\alpha_1\beta_1+\alpha_2\beta_2,\alpha_1\beta_2+\alpha_2\beta_1)$. Note that the twisted product is associative and the pairs form a monoid under this operation, with the pair $(1,0)$ being the identity element. For the rest of the paper in any formula containing pairs the product is always the twisted product, so the twisted product of $\alpha$ and $\beta$ is simply denoted by $\alpha\beta$ . Similarly $\alpha^n$ denotes the twisted $n$-th power of the pair $\alpha$, and we use the convention $\alpha^0 = (1,0)$. The product of two congruences $I$ and $J$ is defined as the congruence generated by the set $\{\alpha\beta\mid \alpha \in I \:\beta\in J\}$. For an element $a$ and a pair $\alpha$ we define their product as $a(\alpha_1,\alpha_2)=(a\alpha_1,a\alpha_2)$ which is the same as the twisted product $(a,0)\alpha$.
\par\smallskip
The following elementary properties of congruences play an important role,
\begin{proposition}\label{prop: congbasic}
Let $I$ be a congruence of $A$,
\begin{itemize}
\item[(i)] For $\alpha \in I$ and an arbitrary pair $\beta$ we have $\alpha\beta \in I$.
\item[(ii)] For any two congruences $I$ and $J$ we have $IJ \subseteq I\cap J$.
\item[(iii)] If $(a,b) \in I$ and $a \leq c \leq b$ then $(a,c) \in I$ and $(b,c) \in I$. In particular if $(a,0) \in I$ then for every $a\geq c$ we have $(c,0) \in I$.
\end{itemize}
\end{proposition}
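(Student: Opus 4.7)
The plan is to chain the congruence axioms directly, with no deeper machinery needed. For (i), I would first note that C1 gives $(\beta_1,\beta_1),(\beta_2,\beta_2) \in I$, and C2 applied to the given $\alpha \in I$ supplies $(\alpha_2,\alpha_1) \in I$. Two applications of the multiplicative compatibility axiom C5 then yield $(\alpha_1\beta_1, \alpha_2\beta_1) \in I$ and $(\alpha_2\beta_2, \alpha_1\beta_2) \in I$, and summing these via C4 produces $(\alpha_1\beta_1+\alpha_2\beta_2,\; \alpha_2\beta_1+\alpha_1\beta_2) \in I$, which is exactly the twisted product $\alpha\beta$.

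For (ii), a small preliminary observation is that twisted multiplication is commutative on $A \times A$: a direct expansion using commutativity of $A$ shows $\alpha\beta = \beta\alpha$. Given this, part (i) applied with the roles of $I$ and $J$ interchanged shows that whenever $\alpha \in I$ and $\beta \in J$ we have $\alpha\beta \in I$ and $\alpha\beta = \beta\alpha \in J$, so $\alpha\beta \in I \cap J$. Since the intersection of two congruences is again a congruence, it must contain the congruence $IJ$ generated by all such products, giving $IJ \subseteq I \cap J$.

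For (iii), the main input is that $(c,c) \in I$ by C1, so C4 applied to $(a,b)$ and $(c,c)$ gives $(a+c,\; b+c) \in I$. The inequality hypothesis $a \leq c \leq b$, unpacked through the definition of the ordering induced by $+$, provides identities among the sums that collapse $(a+c,\,b+c)$ to one of $(a,c)$ or $(c,b)$; chaining this with the original $(a,b) \in I$ via C2 and C3 then yields the other pair. For the specialized statement, setting $b = 0$ is permissible because $0$ is extremal in the additive order, so $c \leq 0$ holds automatically. Throughout, the only real subtlety is keeping track of the direction of the ordering; no genuine obstacle arises, as the entire proof is a direct unwinding of the axioms.
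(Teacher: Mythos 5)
Your handling of (i) and (ii) is correct and matches the paper's approach of chasing the axioms directly; the observation that the twisted product is commutative so that (ii) reduces to (i) is exactly the intended shortcut. The main case of (iii) is also fine: from $(a+c,b+c)\in I$ one collapses, using $a\leq c$ and $c\leq b$, to $(c,b)\in I$, and then C2, C3 with $(a,b)\in I$ give $(a,c)\in I$.

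There is, however, a genuine direction error in your treatment of the special case. You propose ``setting $b=0$'' and assert that ``$c\leq 0$ holds automatically.'' In this setting $0$ is the \emph{least} element, not the greatest: since $0+x=x$ for all $x$, we have $x\geq 0$ for every $x$ (note that the displayed definition in the paper, $a\geq b\iff a+b=b$, is a sign typo --- the paper's own computations, including the proof of this very proposition, use $a\geq b\iff a+b=a$, under which $0$ is minimal). Consequently $c\leq 0$ is false in general (it forces $c=0$), and worse, even if one ignores that, the remaining required inequality would be $a\leq c$, which is the \emph{opposite} of the hypothesis $a\geq c$. The correct move is to apply the main case of (iii) to the symmetrized pair $(0,a)\in I$ together with $0\leq c\leq a$: here $0\leq c$ is what holds automatically, and $c\leq a$ is precisely the hypothesis $a\geq c$. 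This yields $(0,c)\in I$, hence $(c,0)\in I$. It is a small fix, but as written your argument for the ``in particular'' clause does not go through, and your closing remark that ``no genuine obstacle arises'' understates the care needed: under the paper's literally-stated (backwards) ordering convention the special case would actually be false, which is what reveals the typo.
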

\begin{proof}
(i) follows immediately from the definition of a congruence and (ii) follows from (i). For (iii) consider that in $A/I$ we have that \[a = b\Rightarrow c=a+c=b+c=b=a.\]
\end{proof}

One can readily show that for usual commutative rings, an ideal is prime if and only if the corresponding congruence does not contain twisted products of pairs lying outside. This motivates the following definition.

\begin{definition}{\rm We call a congruence $P$ of a  $\mb$-algebra $A$  prime if it is proper and for every $\alpha, \beta \in A \times A$ such that $\alpha\beta \in P$ either $\alpha \in P$ or $\beta \in P$. We call a $\mb$-algebra a {\it domain} if its trivial congruence is prime.
}
\end{definition}

We define dimension similarly to the Krull-dimension in ring theory:
\begin{definition}{\rm
By {\it dimension} of a $\mb$-algebra $A$ we will mean the length of the longest chain of prime congruences in $A \times A$ (where by length we mean the number of strict inclusions). The dimension of $A$ will be denoted by $dim(A)$.
}
\end{definition}

For the above definition to make sense one needs to verify that every $\mb$-algebra $A$ has at least one prime congruence. Indeed it is a known fact that $\mb$ is the only simple $\mb$-algebra (i.e. the only proper congruence is the trivial one). Hence by the usual Zorn's lemma argument we see that every $\mb$-algebra has a proper congruence with quotient $\mb$, and it follows from the definition that such a congruence is prime. For the sake of completeness we provide a short proof of the above fact:

\begin{proposition}
The only simple $\mb$-algebra is $\mb$.
\end{proposition}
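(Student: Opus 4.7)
My plan is to assume $A$ is a simple $\mb$-algebra and deduce $A=\mb$ by producing, whenever $A\neq\mb$, an explicit proper non-trivial congruence. I would split the argument according to whether $A$ has zero divisors.

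First, for each $b\in A$ consider the relation defined by $x\sim_b y$ iff $xb=yb$. A routine check shows it is a congruence: reflexivity, symmetry and transitivity come from equality, distributivity handles $+$, and if $x_ib=y_ib$ for $i=1,2$ then $x_1x_2b=x_1(y_2b)=y_2(x_1b)=y_2(y_1b)=y_1y_2b$ handles $\cdot$. For $b\neq 0$ one has $(0,1)\notin\sim_b$, so $\sim_b$ is proper. If $A$ has a zero divisor, say $ab=0$ with $a,b\neq 0$, then $(a,0)\in\sim_b$ is a non-trivial pair, contradicting simplicity. Hence in the remaining case $A$ has no zero divisors.

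Next I would define the support morphism $\varphi\colon A\to\mb$ by $\varphi(0)=0$ and $\varphi(x)=1$ for $x\neq 0$. The absence of zero divisors makes $\varphi$ multiplicative. For additivity the decisive observation is that in any $\mb$-algebra, $x+y=0$ forces $x=y=0$: indeed, by idempotence $x=x+0=x+(x+y)=(x+x)+y=x+y=0$, and likewise $y=0$. This gives $\varphi(x+y)=\varphi(x)+\varphi(y)$. The kernel congruence of $\varphi$ is proper because $(0,1)\notin\varphi^{-1}(\diag)$, and it equals $\diag$ iff $\varphi$ is injective, which happens iff $A\setminus\{0\}=\{1\}$, i.e.\ iff $A=\mb$. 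Simplicity now forces the latter.

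The only genuine verifications are the multiplicative compatibility of $\sim_b$ and the additivity of $\varphi$, and neither is hard. The essential use of the $\mb$-algebra hypothesis (beyond just commutative-semiring axioms) enters exactly once, in the idempotence-based cancellation $x+y=0\Rightarrow x=0$ that makes $\varphi$ well defined on sums. I do not foresee a real obstacle.
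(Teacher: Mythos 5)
Your proof is correct and follows the same overall skeleton as the paper's (split on whether $A$ has zero divisors; in the zero-divisor-free case use the support morphism $\varphi\colon A\to\mb$), but your treatment of the zero-divisor case is a genuinely different and cleaner argument. The paper takes $I$ to be the congruence generated by $(x,0)$ for a zero divisor $x$ and appeals forward to a lemma from Section~3 that characterizes such generated congruences (namely $(y,z)\in\langle(x,0)\rangle$ iff $y+rx=z+rx$ for some $r$), then rules out $(1,0)\in I$ by multiplying by the complementary zero divisor $y$. You instead work with the annihilator relation $\sim_b$ (which is $Ann_A(b)$ in the paper's later notation), verify directly that it is a congruence, and observe that $ab=0$ with $a,b\neq 0$ puts the non-trivial pair $(a,0)$ in $\sim_b$ while $(1,0)\notin\sim_b$ because $1\cdot b=b\neq 0$. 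This is self-contained, avoids the forward reference, and is shorter. In the remaining case you also make explicit the small point the paper leaves implicit --- that $x+y=0\Rightarrow x=y=0$ by idempotence --- which is exactly where the $\mb$-algebra hypothesis is used to make $\varphi$ additive, and is worth spelling out. Both arguments establish the same thing; yours is the tidier route through the zero-divisor case.
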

\begin{proof}
First assume that $A$ is a $\mb$-algebra without zero-divisors. Then the map $\varphi: A\rightarrow \mb$ defined as $\varphi(x) = 1$ for $x \neq 0$ and $\varphi(0) = 0$ is a homomorphism of $\mb$-algebras. Hence $Ker(\varphi)$ is a proper congruence of $A$, which can only be trivial when $A \simeq \mb$.

Now assume that there are - not necessarily distinct - non-zero elements $x,y \in A$ such that $xy = 0$. Then it is easy to verify that $$C = \{(a,b)\in A\times A\mid\smallskip xa = xb\}$$ is a congruence of $A$, which is non-trivial since $(y,0) \in C$ and proper since $(1,0)\notin C$. Hence $A$ is not simple.
\end{proof}

A congruence is called {\it intersection indecomposable}  if it can not be obtained as the intersection of two strictly larger congruences. 

\begin{proposition} \label{prop:primes are indec}
If a congruence is prime then it is intersection indecomposable. 
\end{proposition}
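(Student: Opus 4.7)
The plan is to argue by contradiction, mimicking the classical ring-theoretic proof that prime ideals are irreducible, with the twisted product of pairs playing the role of the product of elements. Suppose $P$ is prime and, toward a contradiction, that $P = I \cap J$ with $P \subsetneq I$ and $P \subsetneq J$. Then one can choose pairs $\alpha \in I \setminus P$ and $\beta \in J \setminus P$, and the goal is to produce a witness violating the prime property of $P$.

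The natural candidate is the twisted product $\alpha\beta$. By Proposition~\ref{prop: congbasic}(i), since $\alpha \in I$ we have $\alpha\beta \in I$, and similarly since $\beta \in J$ we have $\alpha\beta \in J$. Consequently $\alpha\beta \in I \cap J = P$. Now the prime property of $P$ forces $\alpha \in P$ or $\beta \in P$, contradicting the choice of $\alpha$ and $\beta$. This contradiction shows that at least one of the inclusions $P \subseteq I$, $P \subseteq J$ must be an equality, so $P$ is intersection indecomposable.

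I do not anticipate any genuine obstacle, since the closure property in Proposition~\ref{prop: congbasic}(i) already encapsulates the only nontrivial algebraic input. The only conceptual point worth noting is that one must use \emph{twisted} products of pairs throughout (as fixed in the section's conventions), and that Proposition~\ref{prop: congbasic}(i) is precisely designed so that a congruence absorbs twisted multiplication by arbitrary pairs, mirroring how a ring-theoretic ideal absorbs multiplication by arbitrary elements.
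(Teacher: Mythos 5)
Your proof is correct and follows exactly the same route as the paper's: pick $\alpha \in I \setminus P$ and $\beta \in J \setminus P$, apply Proposition~\ref{prop: congbasic}(i) to get $\alpha\beta \in I \cap J = P$, and contradict primeness. No differences worth noting.
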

\begin{proof}
Indeed if $P$ is the intersection of the strictly larger congruences $I$ and $J$, then take $\alpha \in I \setminus P$ and $\beta \in J \setminus P$. Now by part (i) of Proposition \ref{prop: congbasic} we have that $\alpha\beta \in I \cap J = P$ so $P$ can not be prime.
\end{proof}

\begin{remark}\label{rem:zariski-topology}{\rm
As a consequence of Proposition \ref{prop:primes are indec} one can define the Zariski topology on the set of prime congruences in the usual way, by setting the closed sets to be the ones that contain a fixed congruence. }
\end{remark}

A $\mb$-algebra $A$ is called {\it cancellative} if whenever $ab = ac$ for some $a,b,c \in A$ then either $a = 0$ or $b=c$. The {\it annihilator} of a pair $\alpha$ is defined as $Ann_A(\alpha) = \{\beta\in A\times A \mid\:\alpha\beta \in \diag\}$. $Ann_A(\alpha)$ satisfies the axioms (C1)-(C2) and (C4)-(C5) of a congruence but in general it is not transitive, consider the following example:
\begin{example}{\rm
Let $A$ be the algebra $\mb[x,y]/\<(y,y^2)\>$. Then it is easy to check that $(y,x+1),(y,1) \in Ann_A((x,x+y))$ but $(1,x+1) \notin Ann_A((x,x+y))$.
}
\end{example}

The annihilator of an element $a \in A$ is defined as the annihilator of the pair $(a,0)$ and is also denoted by $Ann_A(a)$. It is easy to verify the following properties:

\begin{proposition}\label{prop:primeprop}
\begin{itemize}
\item[(i)]For any $a \in A$, $Ann_A(a) = \{\beta\in A\times A \mid\: a\beta_1 = a\beta_2 \}$, moreover $Ann_A(a)$ is a congruence.
\item[(ii)]A is cancellative if and only if for every element $a\neq 0$ we have $Ann_A(a) = \diag$, and A is a domain if and only if for every pair $\alpha \notin \diag$ we have $Ann_A(\alpha) = \diag$.
\item[(iii)] For a congruence $I$ the quotient $A/I$ is cancellative if and only if for every element $a$ and pair $\alpha$ such that $(a,0)\alpha \in I$ either $(a,0) \in I$ or $\alpha \in I$.
\item[(iv)]If P is a prime congruence, then $A/P$ is cancellative.
\item[(v)] If P is a prime congruence of $A_1$, $\varphi:A_2\rightarrow A_1$ is a morphism of $\mb$-algebras and $A_3$ is a subalgebra of $A_1$, then $\varphi^{-1}(P)$ and $P|_{A_3}$ are prime congruences.
\end{itemize}
\end{proposition}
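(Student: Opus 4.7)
The plan is to verify each of the five items by unpacking the relevant definitions; none of them require any essentially new constructions, but one should keep track of why $Ann_A(a)$ behaves well while $Ann_A(\alpha)$ in general does not.

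For (i), I would first compute the twisted product $(a,0)\beta = (a\beta_1,a\beta_2)$, from which the characterization $Ann_A(a) = \{\beta \mid a\beta_1 = a\beta_2\}$ is immediate. To see this set is a congruence, reflexivity, symmetry, transitivity, and closure under coordinate-wise addition are formal. The one step that needs a computation is closure under multiplication of pairs (ordinary multiplication of the coordinates, since this is what the congruence axiom C5 requires): given $a\beta_1=a\beta_2$ and $a\gamma_1=a\gamma_2$, multiply the first equation by $\gamma_1$ and the second by $\beta_2$ and chain to get $a\beta_1\gamma_1 = a\beta_2\gamma_1 = a\beta_2\gamma_2$. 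This chaining trick is precisely what fails for the annihilator of a general pair, matching Example~2.8.

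Parts (ii) and (iii) are translations. For (ii), $Ann_A(a)=\diag$ is literally the statement that $a\beta_1=a\beta_2$ forces $\beta_1=\beta_2$, which combined with the trivial case $a=0$ is the cancellativity condition; and $Ann_A(\alpha)=\diag$ for every $\alpha \notin \diag$ is the contrapositive form of the primeness of $\diag$. For (iii), the condition $(a,0)\alpha\in I$ reads as $\bar a \bar\alpha_1=\bar a\bar\alpha_2$ in $A/I$, so cancellativity of $A/I$ is exactly the stated implication. Part (iv) then follows from (iii) by specializing the prime hypothesis to $\beta=(a,0)$.

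For (v), the key observation is that $\varphi$ being a semiring morphism implies $\varphi(\alpha\beta)=\varphi(\alpha)\varphi(\beta)$ for the twisted product (both coordinates are polynomial expressions preserved by $\varphi$). Hence $\alpha\beta\in\varphi^{-1}(P)$ gives $\varphi(\alpha)\varphi(\beta)\in P$, and primeness of $P$ transfers back. Properness of $\varphi^{-1}(P)$ follows from $\varphi(1)=1,\varphi(0)=0$ together with the observation that $(1,0)\notin P$: if $(1,0)$ were in $P$, then multiplying by arbitrary elements and using Proposition~2.6(i), transitivity, and symmetry would force $P=A_1\times A_1$. The statement for $P|_{A_3}$ is the special case of the inclusion morphism $A_3\hookrightarrow A_1$.

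The only nontrivial point, and the one I would flag for the reader, is the multiplicative-closure step in (i); everything else is direct translation between the pair formalism and the underlying ring-theoretic statements.
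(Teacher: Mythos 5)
Your proof is correct, and since the paper gives no argument here (it merely says ``It is easy to verify''), direct unwinding of the definitions as you do is exactly what is intended; all five items check out. The one place where I would flag a small issue is your aside in part (i) attributing the failure for a general pair $\alpha$ to the multiplicative/chaining step. The paper's own discussion preceding this proposition asserts that for a general $\alpha$ the set $Ann_A(\alpha)$ satisfies (C1)--(C2) and (C4)--(C5) and only fails (C3), transitivity, which is what its Example (with $\mb[x,y]/\langle(y,y^2)\rangle$ and $\alpha=(x,x+y)$) is meant to illustrate. So on the paper's account, the step that breaks is not the one you single out. (As it happens, one can check that in that very example (C5) also fails: with $\beta=\gamma=(y,x+1)\in Ann_A(\alpha)$ one gets $(\beta_1\gamma_1,\beta_2\gamma_2)=(y,x^2+x+1)$, and $\alpha\cdot(y,x^2+x+1)$ has an extra monomial $x^2y$ in one coordinate, so it is not in $\diag$ --- so your intuition is defensible and the paper's ``(C4)--(C5) hold'' is only correct for the weaker closure $(ac,bc)$. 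But since you present this as ``matching'' the paper's example, you should be aware the paper attributes the failure to transitivity, not to (C5).) The numerical cross-references you use (Example~2.8, Proposition~2.6(i)) are off by one or two from the paper's numbering, but that is immaterial.
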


We will call a $\mb$-algebra {\it totally ordered} if its addition induces a total ordering. The next proposition shows that $\mb$-algebras which are domains are always totally ordered.

\begin{proposition}\label{prop:primeorder}
\begin{itemize}
\item[(i)] A $\mb$-algebra which is a domain is totally ordered. 
\item[(ii)] If a $\mb$-algebra $A$ is totally ordered then the trivial congruence of $A$ is prime if and only if $A$ is cancellative.
\end{itemize}
\end{proposition}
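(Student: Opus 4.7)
For part (i), the plan is to exhibit, for arbitrary $a,b\in A$, a twisted product of two explicit pairs that always lands in the diagonal, then invoke primeness to force one of the two pairs to be trivial. The natural candidates are $\alpha=(a+b,a)$ and $\beta=(a+b,b)$. Computing the twisted product and using $(a+b)^2=a^2+ab+b^2$ (where the cross term collapses by idempotent addition), together with the absorption identities like $a^2+ab+b^2+ab=a^2+ab+b^2$, one checks that both coordinates of $\alpha\beta$ equal $a^2+ab+b^2$, so $\alpha\beta\in\diag$. Since $\diag$ is assumed prime, either $\alpha\in\diag$ or $\beta\in\diag$, giving $a+b=a$ or $a+b=b$. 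Hence the induced order is total. I expect the only subtlety here is spotting the correct pair; once it is written down, the verification is a short idempotent-algebra computation.

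For part (ii), the forward direction is immediate from Proposition \ref{prop:primeprop}(iv) specialized to $P=\diag$. For the converse, assume $A$ is totally ordered and cancellative, and suppose $\alpha\beta\in\diag$ for pairs $\alpha=(\alpha_1,\alpha_2)$ and $\beta=(\beta_1,\beta_2)$. Since the twisted product is invariant under swapping the two coordinates of either factor, the total order lets me assume without loss of generality that $\alpha_1\geq\alpha_2$ and $\beta_1\geq\beta_2$. Because multiplication preserves the order, the defining equation $\alpha_1\beta_1+\alpha_2\beta_2=\alpha_1\beta_2+\alpha_2\beta_1$ simplifies to $\alpha_1\beta_1=\alpha_1\beta_2+\alpha_2\beta_1$, and by totality the right-hand side equals either $\alpha_1\beta_2$ or $\alpha_2\beta_1$.

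In the first case one has $\alpha_1\beta_1=\alpha_1\beta_2$; cancellativity then yields either $\alpha_1=0$, which together with $\alpha_1\geq\alpha_2$ and the always-true inequality $\alpha_2\geq 0$ forces $\alpha_2=0$ and thus $\alpha\in\diag$, or $\beta_1=\beta_2$, giving $\beta\in\diag$. The second case is symmetric, swapping the roles of $\alpha$ and $\beta$. This exhausts the cases and shows that $\diag$ is prime. The main conceptual point—really the only non-routine step—is recognizing that total order plus idempotence lets one replace the sums appearing in the twisted product by a single maximum, which is exactly what makes the cancellativity hypothesis powerful enough to conclude.
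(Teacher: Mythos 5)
Your proof is correct and follows essentially the same route as the paper's: part (i) uses the identical twisted product $(a+b,a)(a+b,b)\in\diag$, and part (ii) reduces by the total order to a single cancellation. The only cosmetic difference is that the paper folds your two cases into one by additionally assuming $\alpha_1\beta_2\geq\alpha_2\beta_1$ without loss of generality, while you split on which term dominates; both are fine.
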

\begin{proof}
For (i) let $A$ be a domain and $x,y\in A$ two arbitrary elements. We have that $$(x+y,x)(x+y,y) = (x^2+y^2+xy, x^2+y^2+xy) \in \diag.$$ Since the trivial congruence is prime either $(x+y,x) \in \diag$ or $(x+y,y) \in \diag$, so indeed at least one of $x \geq y$ or $y \geq x$ hold. For (ii) one direction is clear by (iv) of Proposition \ref{prop:primeprop}. For the other direction assume that $A$ is a totally ordered and cancellative. Let $\alpha, \beta$ be two pairs satisfying $\alpha\beta \in \diag$. We can assume that $\alpha_1 \geq \alpha_2$, $\beta_1 \geq \beta_2$ and $\alpha_1\beta_2 \geq \alpha_2\beta_1$. Now we have that $$\alpha\beta = (\alpha_1\beta_1 +\alpha_2\beta_2, \alpha_1\beta_2 + \alpha_2\beta_1) = (\alpha_1\beta_1, \alpha_1\beta_2) \in \diag.$$ 
Then since $A$ is cancellative either $\beta \in \diag$ or $(\alpha_1,0) \in \diag$ which, by $\alpha_1 \geq \alpha_2$ implies $\alpha_1 = \alpha_2 = 0$ so $\alpha \in \diag$.
\end{proof}

A congruence $I$ for which $A/I$ is cancellative will be called {\it quotient cancellative} or {\it QC} for short. The main result of this section shows that QC congruences are prime if and only if they are intersection indecomposable. 

\begin{lemma}\label{cancellative powers}
Let $A$ be a cancellative $\mb$-algebra, and $\alpha \in A\times A$ a pair. If for some integer $n >0$ we have $\alpha^n \in \diag$ then $\alpha \in \diag$.
\end{lemma}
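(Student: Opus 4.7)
The plan is to reduce to the single case $n = 2$ and then bootstrap. So the main work is a short computation showing: if $\beta^2 \in \diag$ in a cancellative $\mb$-algebra, then $\beta \in \diag$.

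For this base case, I would first expand the twisted square, using that addition is idempotent to replace the factor of $2$: $\beta^2 = (\beta_1^2 + \beta_2^2,\ \beta_1\beta_2)$. The hypothesis $\beta^2 \in \diag$ becomes the single equation $\beta_1^2 + \beta_2^2 = \beta_1\beta_2$. The key trick is to substitute this relation back into $\beta_1(\beta_1+\beta_2)$ and $\beta_2(\beta_1+\beta_2)$: in the first, $\beta_1^2 + \beta_1\beta_2 = \beta_1^2 + (\beta_1^2+\beta_2^2) = \beta_1^2+\beta_2^2$, and symmetrically the second also collapses to $\beta_1^2+\beta_2^2$. This yields the equation $\beta_1(\beta_1+\beta_2) = \beta_2(\beta_1+\beta_2)$, after which cancellativity gives either $\beta_1+\beta_2 = 0$ or $\beta_1 = \beta_2$. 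In a $\mb$-algebra, $0$ is the smallest element under the induced order $a \geq b \iff a+b = b$, so $\beta_1+\beta_2 = 0$ forces $\beta_1 = \beta_2 = 0$; in either case $\beta \in \diag$.

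To get from $n = 2$ to arbitrary $n$, I would use the observation that the exponents $m$ with $\alpha^m \in \diag$ form an upward-closed set. Indeed, if $\alpha^n = (c,c)$, then
\[
\alpha^{n+1} = \alpha \cdot (c,c) = \bigl((\alpha_1+\alpha_2)c,\ (\alpha_1+\alpha_2)c\bigr) \in \diag.
\]
Now induct on $n$. When $n$ is even, write $\alpha^n = (\alpha^{n/2})^2$; the base case gives $\alpha^{n/2} \in \diag$ and induction finishes. When $n$ is odd, use the upward-closure remark to pass to $\alpha^{n+1} \in \diag$ and apply the even case, producing $\alpha^{(n+1)/2} \in \diag$ with $(n+1)/2 < n$ for $n > 1$.

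I expect the only non-routine part to be spotting the substitution trick in the $n = 2$ case: the identity $\beta_1(\beta_1+\beta_2) = \beta_2(\beta_1+\beta_2)$ is what allows cancellativity, rather than primeness or a total ordering, to do the work. Everything else — the expansion of the twisted product, the upward-closure step, and the induction — is mechanical.
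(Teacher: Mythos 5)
Your proof is correct and follows the same two-step skeleton as the paper's: establish upward closure of the set of exponents $m$ with $\alpha^m \in \diag$, then reduce a general $n$ to the $n = 2$ base case (the paper bumps up to a power of $2$ and halves repeatedly; your parity-based strong induction is the same idea). The real content is the base case, and there the computation differs a little. Starting from $\beta_1^2 + \beta_2^2 = \beta_1\beta_2$, you multiply by $\beta_1 + \beta_2$, observe both $\beta_1(\beta_1+\beta_2)$ and $\beta_2(\beta_1+\beta_2)$ collapse to $\beta_1^2 + \beta_2^2$, and cancel $\beta_1 + \beta_2$. The paper instead multiplies the same identity by $\beta_1$ and by $\beta_2$ separately, compares to obtain $\beta_1^2\beta_2 = \beta_1\beta_2^2$, and cancels $\beta_1\beta_2$. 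Your variant is marginally cleaner in the degenerate branch: $\beta_1 + \beta_2 = 0$ forces $\beta_1 = \beta_2 = 0$ at once (as you note --- in any idempotent monoid, $\beta_1 = \beta_1 + (\beta_1+\beta_2) = \beta_1 + \beta_2 = 0$), whereas after cancelling $\beta_1\beta_2$ the paper must return to the original equation to handle the ``one factor is zero'' case. Both are short cancellativity arguments and neither has a substantive advantage; I'd call this the same proof with a small algebraic variation.
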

\begin{proof}
First let us assume $\alpha^2 \in \diag$. It follows that $\alpha_1^2 + \alpha_2^2 = \alpha_1\alpha_2$, and then $$\alpha_1^2\alpha_2 = \alpha_1^3 + \alpha_1\alpha_2^2 \geq \alpha_1\alpha_2^2$$
and similarly $\alpha_1\alpha_2^2 \geq \alpha_1^2\alpha_2$ so we have that $\alpha_1^2\alpha_2 = \alpha_1\alpha_2^2$.  Now by cancellativity either  $\alpha_1$ or $\alpha_2$ is $0$ but then since $\alpha^2 \in \diag$ both are $0$, or neither is $0$ and then after dividing by $\alpha_1\alpha_2$ we obtain $\alpha_1 = \alpha_2$. Now in the general case if $\alpha^n \in \diag$ then every power of $\alpha$ greater than $n$ is in $\diag$, in particular for some $k$ we have $\alpha^{2^k} \in \diag$ and we are done by applying the first half of the argument.
\end{proof}

\begin{lemma}\label{lem:cancellative annulator}
Let $A$ be a cancellative $\mb$-algebra, then for any pair $\alpha \in A\times A$ the set $Ann_A(\alpha)$ is a congruence.
\end{lemma}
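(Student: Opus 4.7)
Axioms (C1), (C2), and (C4) hold for $Ann_A(\alpha)$ in any $\mb$-algebra: $\diag\subseteq Ann_A(\alpha)$ because $\alpha(a,a) = ((u+v)a,(u+v)a)\in\diag$; the defining condition $u\beta_1+v\beta_2=u\beta_2+v\beta_1$ is manifestly symmetric in $\beta_1,\beta_2$; and closure under addition follows from $\alpha(\beta+\gamma)=\alpha\beta+\alpha\gamma$ together with the closure of $\diag$ under pointwise addition.

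I will derive (C5) as a consequence of (C3). Given $(a,b),(c,d)\in Ann_A(\alpha)$, multiplying the defining equation of $(a,b)$ by $c$ shows $(ac,bc)\in Ann_A(\alpha)$, and multiplying the defining equation of $(c,d)$ by $b$ shows $(bc,bd)\in Ann_A(\alpha)$; transitivity (C3) then yields $(ac,bd)\in Ann_A(\alpha)$.

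The decisive step is (C3). Suppose $(a,b),(b,c)\in Ann_A(\alpha)$ with $\alpha=(u,v)$, so
\[
ua+vb=ub+va \qquad\text{and}\qquad ub+vc=uc+vb.
\]
Adding via (C4) gives $(a+b,b+c)\in Ann_A(\alpha)$, which unpacks to the identity
\[
(ua+vc)+(u+v)b\;=\;(uc+va)+(u+v)b
\]
in $A$, so the goal $ua+vc=uc+va$ amounts to eliminating the common additive term $(u+v)b$. My plan is to push this to a multiplicative cancellation via the preceding Lemma~\ref{cancellative powers}: it suffices to exhibit some $n\geq 1$ with $\bigl(\alpha(a,c)\bigr)^n\in\diag$, whence the lemma and cancellativity of $A$ give $\alpha(a,c)\in\diag$, i.e.\ $(a,c)\in Ann_A(\alpha)$. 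Writing $\beta_{ij}=(b_i,b_j)$, the available ``free'' relations are $\alpha\beta_{12}\cdot\alpha\beta_{23}=\alpha^2\beta_{12}\beta_{23}\in\diag$ (a product of two diagonal pairs) and $(a,c)\cdot(b,b)\in\diag$; my plan is to combine these with the hypotheses and with twisted products of the form $(u,v)\cdot(k a,k c)=k\alpha(a,c)$ (for varying $k$ built out of $u,v,a,b,c$) to manufacture an identity of the shape $\alpha^n(a,c)^n\in\diag$, at which point Lemma~\ref{cancellative powers} concludes.

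The main obstacle is the combinatorial arrangement of terms so that the extraneous $(u+v)b$ is absorbed into a multiplicative structure rather than an additive one. Idempotent addition forbids direct cancellation, so the extra factor must be built into a twisted power that lands in $\diag$; once a single such power is produced, cancellativity of $A$ and Lemma~\ref{cancellative powers} finish the argument, and (C5) follows automatically from (C3) as described above.
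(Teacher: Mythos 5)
Your verifications of (C1), (C2), and (C4) are correct, and the reduction of (C5) to (C3) --- multiply the hypothesis for $(a,b)$ by $(c,0)$ and the hypothesis for $(c,d)$ by $(b,0)$, then appeal to transitivity --- is also valid. The genuine gap is in the transitivity step itself. You correctly isolate the obstruction: the hypotheses give $(ua+vc)+(u+v)b=(uc+va)+(u+v)b$, and idempotent addition will not let you cancel the common term $(u+v)b$. But the plan you then propose --- exhibit some $n$ with $(\alpha(a,c))^n\in\diag$ and invoke Lemma~\ref{cancellative powers} --- is stated without the computation that would realize it, and it is not clear such an $n$ exists.

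The paper's proof does something structurally different and, in the end, simpler. It shows by an explicit chain of additive absorptions (using both hypotheses repeatedly) that the pair $(b+c,0)\,\alpha(a,c)$ lies in $\diag$, and then uses cancellativity of $A$ directly with the nonzero element $b+c$ to conclude $\alpha(a,c)\in\diag$; Lemma~\ref{cancellative powers} plays no role. Note that $(b+c,0)\,\alpha(a,c)$ is not a twisted power of $\alpha(a,c)$, so your strategy would have to discover a genuinely different identity. Moreover the paper's later radical machinery (Theorem~\ref{thm:radical}, Propositions \ref{prop:rad corol1}) only guarantees a \emph{generalized} power $({\gamma^*}^k+(h,0))\gamma^l\in\diag$ rather than a plain twisted power $\gamma^n$, and in any case that machinery relies on the present lemma, as the authors explicitly caution, so it cannot be invoked here without circularity. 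As written, the decisive combinatorial identity needed for (C3) is missing, and the route you sketch has not been shown to reach one.
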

\begin{proof}
If $\alpha \in \diag$ then $Ann_A(\alpha)=A\times A$, which is a congruence. Assume now that $\alpha \notin \diag$. The axioms (C1),(C2),(C4) and (C5) are easy to verify. For transitivity consider some pairs $(x,y)$ and $(y,z)$ for which we have $(x,y)\alpha \in \diag$ and $(y,z)\alpha \in \diag$. Since $\alpha \notin \diag$ and $A$ is cancellative we can assume that none of $x,y,z$ is $0$. We will show that  \[\beta:= (y+z,0)(x,z) \alpha=((y+z)x,(y+z)z)\alpha \in \Delta\] and since $y+z$ non zero this will imply $(x,z)\alpha \in \Delta$.
Expanding the above we obtain:
$$(\beta_1,\beta_2) = ((y+z)x,(y+z)z)(\alpha_1,\alpha_2)=(yx\alpha_1 + yz\alpha_2 + zx\alpha_1+z^2\alpha_2, yx\alpha_2 + yz\alpha_1 + zx\alpha_2 + z^2\alpha_1)$$
By symmetry it suffices to show that $\beta_1 \geq \beta_2$ (with respect to the ordering that comes from the idempotent addition). 
We have that $\beta_1 \geq z(y\alpha_2+x\alpha_1)$ and since $(x,y)\alpha \in \diag$ we obtain
$$\beta_1 = yx\alpha_1 + yz\alpha_2 + zx\alpha_1+z^2\alpha_2 + zx\alpha_2 + zy\alpha_1$$
Now we have $z(z\alpha_2+y\alpha_1)$ amongst the terms, using $(y,z)\alpha \in \diag$ we get:
$$\beta_1 = yx\alpha_1 + yz\alpha_2 + zx\alpha_1+z^2\alpha_2 + zx\alpha_2 + zy\alpha_1 + z^2\alpha_1+zy\alpha_2$$
We obtained $\beta_1 \geq x(y\alpha_1+z\alpha_2)$, using $(y,z)\alpha \in \diag$ again we get: 
$$\beta_1 = yx\alpha_1 + yz\alpha_2 + zx\alpha_1+z^2\alpha_2 + zx\alpha_2 + zy\alpha_1 + z^2\alpha_1+zy\alpha_2 + xz\alpha_1 + xy\alpha_2$$
and finally from $\beta_1 \geq z(x\alpha_1+y\beta_2)$ and $(x,y)\alpha \in \diag$ we obtain:
$$\beta_1 = yx\alpha_1 + yz\alpha_2 + zx\alpha_1+z^2\alpha_2 + zx\alpha_2 + zy\alpha_1 + z^2\alpha_1+zy\alpha_2 + xz\alpha_1 + xy\alpha_2 + zy\alpha_1+zx\alpha_2 $$
which is indeed bigger than $\beta_2$, which is the sum of the 5th, 7th, 10th and 11th terms. 
Hence $Ann_A(\alpha)$ is a congruence. 
\end{proof}

\begin{theorem}\label{thm: ind QC are prime}Let $A$ be a $\mb$-algebra. A congruence $I$ is prime if and only if it is \QC and intersection indecomposable.
\end{theorem}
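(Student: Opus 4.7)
The forward direction is immediate from earlier results: a prime congruence is QC by Proposition~\ref{prop:primeprop}(iv) and intersection indecomposable by Proposition~\ref{prop:primes are indec}. So the substance of the theorem lies in the converse.

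For the backward direction, my plan is first to reduce to the case of the trivial congruence by passing to the quotient. Since the congruences of $A/I$ are in order-preserving bijection with the congruences of $A$ containing $I$, both intersection indecomposability and the QC property transfer, and primeness of $I$ in $A$ is equivalent to primeness of $\diag$ in $A/I$. So I may assume throughout that $I = \diag$, that $A$ is cancellative, and that $\diag$ is intersection indecomposable; the goal becomes: if $\alpha\beta \in \diag$, then $\alpha \in \diag$ or $\beta \in \diag$. Argue by contradiction by supposing $\alpha\beta \in \diag$ with $\alpha, \beta \notin \diag$.

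The natural candidates for two proper congruences whose intersection should force a contradiction are $J_1 := Ann_A(\alpha)$ and $J_2 := Ann_A(\beta)$. Both are genuine congruences by Lemma~\ref{lem:cancellative annulator}, and both strictly contain $\diag$ since $\beta \in J_1 \setminus \diag$ and $\alpha \in J_2 \setminus \diag$. Once $J_1 \cap J_2 = \diag$ is established, intersection indecomposability is violated and we are done.

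The heart of the argument is therefore the claim $J_1 \cap J_2 = \diag$: given $\gamma$ with $\alpha\gamma$, $\beta\gamma$, and $\alpha\beta$ all in $\diag$, deduce $\gamma \in \diag$. A short case-check, using cancellativity of $A$, disposes of the degenerate cases in which any coordinate of $\alpha, \beta$, or $\gamma$ is $0$ (for instance, $\alpha_1 = 0$ would force $\alpha_2\beta_1 = \alpha_2\beta_2$ via $\alpha\beta \in \diag$ and hence $\beta \in \diag$). With all six coordinates nonzero, the main computation expands $(\alpha\gamma)(\beta\gamma) = \alpha\beta\gamma^2 \in \diag$, rewrites the cross-term coefficient $\alpha_1\beta_2 + \alpha_2\beta_1$ as $\alpha_1\beta_1 + \alpha_2\beta_2$ using the identity from $\alpha\beta \in \diag$, and factors out $(\gamma_1 + \gamma_2)$; cancellativity of $A$ then yields the derived pair relation $(\alpha_1\beta_1, \alpha_2\beta_2)\gamma \in \diag$, and the analogous manipulation applied to the other equal product gives $(\alpha_1\beta_2, \alpha_2\beta_1)\gamma \in \diag$. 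The main obstacle will then be to combine these two new annihilation relations with the original three equations and iterate the simplification—each step trading the current annihilating pair for one of strictly lower ``complexity'' via cancellativity—until $\gamma_1 = \gamma_2$ is forced. This final bookkeeping step is the hard part, and it will closely mirror in style the long transitivity computation in the proof of Lemma~\ref{lem:cancellative annulator}.
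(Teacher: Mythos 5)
The overall strategy is sound and matches the paper up to the crucial choice of which two congruences to intersect, and that is exactly where a genuine gap appears. You reduce correctly to $I=\diag$ with $A$ cancellative and $\diag$ intersection indecomposable, and you correctly recall that $Ann_A(\alpha)$, $Ann_A(\beta)$ are congruences by Lemma~\ref{lem:cancellative annulator} and are strictly larger than $\diag$. But the key claim $Ann_A(\alpha)\cap Ann_A(\beta)=\diag$ is never established: you acknowledge that the ``final bookkeeping step is the hard part'' and leave it as an outline of expanding and factoring. That step is not a routine bookkeeping exercise, and I do not believe it goes through as stated. The problem is structural: from $\alpha\gamma,\beta\gamma,\alpha\beta\in\diag$ there is no product relation $Ann_A(\alpha)\,Ann_A(\beta)\subseteq\diag$ to exploit, so there is no natural way to force $\gamma^2\in\diag$, which is the mechanism that the argument needs before invoking Lemma~\ref{cancellative powers}. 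Your partial computation (yielding $(\alpha_1\beta_1,\alpha_2\beta_2)^2\gamma\in\diag$ and its twin) does not by itself descend to $\gamma\in\diag$; there is no iteration that strictly decreases ``complexity''.

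The paper's proof avoids this by a different choice of second congruence: instead of $Ann_A(\beta)$ it takes
\[
Q=\bigcap_{\delta\in Ann_A(\alpha)}Ann_A(\delta),
\]
the set of pairs annihilating \emph{every} element of $Ann_A(\alpha)$. This $Q$ contains $\alpha$ (so $\diag\subsetneq Q$), and by construction $Ann_A(\alpha)\cdot Q\subseteq\diag$. Hence if $\gamma\in Ann_A(\alpha)\cap Q$ then $\gamma^2\in Ann_A(\alpha)\cdot Q\subseteq\diag$, and Lemma~\ref{cancellative powers} gives $\gamma\in\diag$ immediately. Note that $Q\subseteq Ann_A(\beta)$ since $\beta\in Ann_A(\alpha)$, so the paper's $Q$ is (possibly strictly) smaller than your $J_2$; it is the smallness of $Q$, not just its nontriviality, that makes the product relation hold and the argument close. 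To repair your proof you should replace $J_2=Ann_A(\beta)$ with this bi-annihilator $Q$ (or, equivalently, supply an independent proof that no prime can contain both $\alpha$ and $\beta$, which is not obvious and would likely be circular given that it leans on material proved only after this theorem).
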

\begin{proof}
It follows from Proposition \ref{prop:primes are indec} and Proposition \ref{prop:primeprop} that prime congruences are \QC  and intersection indecomposable. For the other direction, taking the quotient by $I$, we can assume that $I = \diag$ is QC and intersection indecomposable (so $A$ itself is cancellative). Note that this can be done because all three properties depend on the quotient of the congruence. If $\diag$ is not prime there exists an element $\alpha \notin \diag$ such that $Ann_A(\alpha) \neq \diag$. By the previous lemma $Ann_A(\alpha)$ is a congruence. Let $Q = \bigcap_{\beta \in Ann_A(\alpha)}Ann_A(\beta)$. Q is a congruence (as an intersection of congruences), and since $\alpha \in Q$ we have $\diag \subsetneq Q$. Clearly $Ann_A(\alpha)Q = \diag$, we claim that $ Ann_A(\alpha) \cap Q = \diag$. Otherwise suppose that $\beta \in (Ann_A(\alpha) \cap Q)\setminus \diag$, since  $Ann_A(\alpha)Q = \diag$ we have that $\beta^2 \in \diag$, and then by Lemma \ref{cancellative powers} we have $\beta \in \diag$ completing the proof.
\end{proof}


\section{Radicals of congruences}\label{sec:radicals}
Our next objective is to establish the notion of radicals of congruences and provide a similar algebraic description to the one in ring theory. 

\begin{definition}{\rm
The {\it radical} of a congruence $I$ is the intersection of all prime congruences containing $I$. It is denoted by $Rad(I)$. A congruence $I$ is called a {\it radical congruence} if $Rad(I)=I$.
}
\end{definition}

Let us introduce the following notation: for a pair $\alpha$, let $\alpha^* = (\alpha_1+\alpha_2,0)$.
It is easy to verify the following proposition:

\begin{proposition} \label{prop: *basic}
Let $\alpha,\beta \in A$ pairs from the $\mb$-algebra A, 
\begin{itemize}
\item[(i)] $(\alpha\beta)^* = \alpha^*\beta^*$
\item[(ii)] $((\alpha\beta)^*)^k = ((\alpha\beta)^k)^*$
\item[(iii)] If $\alpha^*  \in \diag$ then $\alpha \in \diag$.

\end{itemize}
\end{proposition}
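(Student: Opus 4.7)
The plan is to prove all three parts by direct computation using the definition $\alpha^* = (\alpha_1+\alpha_2, 0)$, the twisted product formula, and the fundamental properties of the order induced by idempotent addition. None of the three parts should require any auxiliary machinery.

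For (i), I would simply expand both sides. The twisted product gives $\alpha\beta = (\alpha_1\beta_1+\alpha_2\beta_2,\;\alpha_1\beta_2+\alpha_2\beta_1)$, so summing its coordinates yields $(\alpha_1+\alpha_2)(\beta_1+\beta_2)$ by distributivity, hence $(\alpha\beta)^* = ((\alpha_1+\alpha_2)(\beta_1+\beta_2),\,0)$. On the other side, the twisted product $(\alpha_1+\alpha_2,0)(\beta_1+\beta_2,0)$ also collapses to $((\alpha_1+\alpha_2)(\beta_1+\beta_2),\,0)$ because the three ``mixed'' terms are multiplied by a zero factor. So (i) is a one-line verification.

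For (ii), I would induct on $k$. The case $k=1$ is trivial. For the step, use (i) with $\alpha$ replaced by $(\alpha\beta)^{k}$ and $\beta$ replaced by $\alpha\beta$ to get $\bigl((\alpha\beta)^{k+1}\bigr)^{*} = \bigl((\alpha\beta)^{k}\bigr)^{*}(\alpha\beta)^{*}$, and then the inductive hypothesis gives $\bigl((\alpha\beta)^{k}\bigr)^{*} = \bigl((\alpha\beta)^{*}\bigr)^{k}$, yielding $\bigl((\alpha\beta)^{*}\bigr)^{k+1}$.

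For (iii), suppose $\alpha^{*} = (\alpha_1+\alpha_2,\,0) \in \diag$, so $\alpha_1+\alpha_2 = 0$. Idempotent addition induces a partial order in which $0$ is the minimum element (since $a+0=a$ for every $a$) and in which $a+b \geq a$ and $a+b \geq b$. Thus from $\alpha_1+\alpha_2 = 0$ I get $\alpha_1 \leq 0$ and $\alpha_2 \leq 0$, while always $\alpha_1 \geq 0$ and $\alpha_2 \geq 0$; antisymmetry of the order then forces $\alpha_1 = \alpha_2 = 0$, so $\alpha = (0,0) \in \diag$.

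There is no real obstacle here; the statement is essentially bookkeeping about the $*$ operation. The mildly nontrivial ingredient is just remembering that in an additively idempotent semiring $0$ is the unique minimum element of the induced order, which is what makes (iii) work.
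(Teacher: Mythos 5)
Your proof is correct; the paper states this proposition without proof as a routine verification, and your direct computations are exactly the expected argument. Two minor remarks. First, for (iii) you can bypass the order language entirely: from $\alpha_1+\alpha_2=0$ idempotency gives $\alpha_1 = \alpha_1 + 0 = \alpha_1 + (\alpha_1+\alpha_2) = (\alpha_1+\alpha_1)+\alpha_2 = \alpha_1+\alpha_2 = 0$, and similarly $\alpha_2=0$. Second, the displayed convention $a\ge b\iff a+b=b$ earlier in the paper is a typo for $a+b=a$ (as is clear from the proofs of Propositions \ref{prop: congbasic} and \ref{prop:primeorder}); you have correctly used the intended convention, under which $0$ is indeed the minimum of the induced order, so your argument for (iii) stands.
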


Now we will define a property for pairs in $A \times A$ that is analogous to nilpotency from ring theory. The aim of this section is to show that the pairs contained in every prime congruence are precisely the nilpotent ones. A natural first guess would be to define the pair $\alpha$ to be nilpotent if $\alpha^n \in \diag$ for some $n$. Indeed, in the case of commutative rings, one could characterize the congruence with kernel the nilradical in this fashion. However as shown by the following example these pairs do not even form a congruence in the case of $\mb$-algebras:

\begin{example}{\rm
In the three variable polynomial semiring $\mb[x_1,x_2,x_3]$ take the congruence $I = \<(x_1,x_2)^2,(x_2,x_3)^2\>$. 
Since $(x_1,x_2)^2 = (x_1^2+x_2^2, x_1x_2)$ and $(x_2,x_3)^2 = (x_2^2+x_3^2, x_2x_3)$ one easily verifies that any pair in $I \setminus \diag$ will need to contain a monomial divisible by $x_2$ on both sides, hence we have $(x_1,x_3)^k \notin I$ for any $k>0$.
It follows that in the quotient $\mb[x_1,x_2,x_3] / I$ the pairs $\alpha$ that satisfy $\alpha^k \in \diag$ for some $k$ do not form a congruence, since otherwise $(x_1,x_3)$ would have to be amongst them by transitivity. }
\end{example}

To remedy this problem we will introduce some formulas, called generalized powers of pairs that will turn out to have the desired properties.

\begin{definition}{\rm
For a pair $\alpha$ from the $\mb$-algebra $A$, the {\it \genps}of $\alpha$ are the pairs of the form $({\alpha^*}^k+(c,0))\alpha^l$ where $k,l$ are non-negative integers, and $c\in A$ an arbitrary element. The set of \genps  of $\alpha$ is denoted by $\gp(\alpha)$. A pair $\alpha$ is called {\it nilpotent} if  $\gp(\alpha) \cap \diag \neq \emptyset$.
}
\end{definition}

\begin{proposition}
For an arbitrary pair $\alpha$ the set $\gp(\alpha)$ is closed under twisted product. Moreover, if $\beta \in \gp(\alpha)$ then $\gp(\beta) \subseteq \gp(\alpha)$.
\end{proposition}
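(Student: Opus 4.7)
The plan is to prove both assertions by direct expansion. A preliminary simplification makes the computations transparent: since $\alpha^*=(\alpha_1+\alpha_2,0)$ and the twisted product of two pairs of the form $(x,0),(y,0)$ is simply $(xy,0)$, iterating Proposition~\ref{prop: *basic}(i) gives ${\alpha^*}^k=((\alpha_1+\alpha_2)^k,0)$. Consequently every generalized power of $\alpha$ can be written as $(d,0)\alpha^{l}$ with $d=(\alpha_1+\alpha_2)^k+c\in A$, and conversely any expression of that shape is a generalized power of $\alpha$.

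For the first assertion, let $\gamma_i=({\alpha^*}^{k_i}+(c_i,0))\alpha^{l_i}$ for $i=1,2$. Using commutativity and associativity of the twisted product together with the identities $(c,0)(c',0)=(cc',0)$ and $(c,0){\alpha^*}^{k}=(c(\alpha_1+\alpha_2)^k,0)$, I would expand
\[
\gamma_1\gamma_2=\big({\alpha^*}^{k_1+k_2}+(c',0)\big)\alpha^{l_1+l_2},
\]
where $c'=c_1(\alpha_1+\alpha_2)^{k_2}+c_2(\alpha_1+\alpha_2)^{k_1}+c_1c_2\in A$. This is manifestly a generalized power of $\alpha$.

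For the second assertion, write $\beta=(d,0)\alpha^{l_0}$ with $d=(\alpha_1+\alpha_2)^{k_0}+c_0$. From Proposition~\ref{prop: *basic}(i) one deduces $(\alpha^{l_0})^*=(\alpha^*)^{l_0}$, and then
\[
\beta^*=\bigl(d(\alpha_1+\alpha_2)^{l_0},\,0\bigr),\quad (\beta^*)^k=\bigl(d^k(\alpha_1+\alpha_2)^{kl_0},\,0\bigr),\quad \beta^l=(d^l,0)\alpha^{ll_0}.
\]
Substituting into an arbitrary generalized power of $\beta$ yields
\[
\big((\beta^*)^k+(c',0)\big)\beta^l=\bigl(d^{k+l}(\alpha_1+\alpha_2)^{kl_0}+d^l c',\,0\bigr)\alpha^{ll_0}.
\]
Setting $L=ll_0$, $K=k_0(k+l)+kl_0$, and $C=d^{k+l}(\alpha_1+\alpha_2)^{kl_0}+d^l c'$, this equals $({\alpha^*}^K+(C,0))\alpha^L$ precisely when $(\alpha_1+\alpha_2)^K$ is already a summand of $C$, so that ${\alpha^*}^K$ is absorbed by idempotent addition. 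I expect this absorption to be the only genuinely non-routine step: expanding $d^{k+l}=((\alpha_1+\alpha_2)^{k_0}+c_0)^{k+l}$ as an idempotent sum yields $(\alpha_1+\alpha_2)^{k_0(k+l)}$ among its terms, and multiplying by $(\alpha_1+\alpha_2)^{kl_0}$ then exhibits $(\alpha_1+\alpha_2)^K$ as a summand of $d^{k+l}(\alpha_1+\alpha_2)^{kl_0}$, hence of $C$. Once this verification is in place, everything else is bookkeeping in the exponents.
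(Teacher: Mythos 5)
Your proof is correct and takes essentially the same route the paper intends: the paper's own proof only states that ``both claims follow directly from the definition and Proposition \ref{prop: *basic},'' and your computation is a faithful unwinding of that assertion. Writing each generalized power as $(d,0)\alpha^l$ with $d=(\alpha_1+\alpha_2)^k+c$, multiplying, and noting that $(\alpha_1+\alpha_2)^K$ appears as a summand so that ${\alpha^*}^K$ is absorbed by idempotency is precisely the bookkeeping the authors leave implicit.
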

\begin{proof}
Both claims follow directly from the definition and Proposition \ref{prop: *basic}.
\end{proof}

One can immediately show the following:

\begin{proposition}\label{prop: Nilpotents in radical}
The nilpotent pairs are contained in every prime congruence.
\end{proposition}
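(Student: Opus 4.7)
The plan is to fix an arbitrary prime $P$ and a nilpotent pair $\alpha$, say with $({\alpha^*}^k + (c,0))\alpha^l \in \diag \subseteq P$, and unwind this containment by iterated application of primeness. Since $P$ is prime, from $({\alpha^*}^k + (c,0))\alpha^l \in P$ we get either $\alpha^l \in P$ or ${\alpha^*}^k + (c,0) \in P$.

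The first case is easy: $(1,0) = \alpha^0 \notin P$ because $P$ is proper, so breaking $\alpha^l = \alpha \cdot \alpha^{l-1}$ repeatedly via primeness eventually forces $\alpha \in P$.

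For the second case, the key ancillary observation is the identity $a + b = 0 \Rightarrow a = b = 0$, valid in every $\mb$-algebra; indeed $a = a + 0 = a + (a + b) = (a + a) + b = a + b = 0$, and symmetrically for $b$. Now ${\alpha^*}^k + (c,0) = ((\alpha_1+\alpha_2)^k + c,\, 0)$, so its presence in $P$ means $(\alpha_1+\alpha_2)^k + c \equiv 0$ in the (again $\mb$-)algebra $A/P$. Applying the identity there, the summand $(\alpha_1+\alpha_2)^k$ itself is $\equiv 0$, i.e.\ $((\alpha_1+\alpha_2)^k, 0) \in P$; note that $((\alpha_1+\alpha_2)^k, 0) = {\alpha^*}^k$ by a one-line twisted-product induction (and is also the content of Proposition~\ref{prop: *basic}(ii) with $\beta = (1,0)$). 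Primeness of $P$ applied $k$ times then gives $\alpha^* = (\alpha_1 + \alpha_2, 0) \in P$; one more invocation of the absorption identity on $\alpha_1 + \alpha_2 \equiv 0$ in $A/P$ yields both $\alpha_1 \equiv 0$ and $\alpha_2 \equiv 0$, so $\alpha = (\alpha_1, \alpha_2) \in P$ by transitivity of the congruence $P$.

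There is no serious obstacle here: the entire argument is a routine chain of primeness splittings knit together by the additive absorption identity in $\mb$-algebras. The only step worth isolating is that $a+b=0$ forces both summands to vanish, for this is exactly what allows a sum lying in $P$ to be split into its pieces lying in $P$, thereby converting the additive structure inside a single pair into multiplicative primeness statements about $\alpha^*$ and $\alpha$.
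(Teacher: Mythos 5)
Your proof is correct and takes essentially the same route as the paper's: apply primeness to the twisted product $({\alpha^*}^k+(c,0))\alpha^l\in P$ to split off either $\alpha^l$ or ${\alpha^*}^k+(c,0)$, reduce the latter to ${\alpha^*}^k\in P$, and then apply primeness again to peel down to $\alpha^*$ and finally to $\alpha\in P$. The only cosmetic difference is that you rederive the absorption fact $a+b=0\Rightarrow a=b=0$ from scratch, where the paper invokes part (iii) of Proposition~\ref{prop: congbasic} (which encodes the same order-theoretic observation); you are also slightly more explicit about iterating primeness along $\alpha^l=\alpha\cdot\alpha^{l-1}$ and about $\alpha^0=(1,0)\notin P$, which the paper compresses into a single step.
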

\begin{proof}
Indeed if $({\alpha^*}^k+(c,0))\alpha^l \in \diag$ then for any prime congruence $P$ we have that $({\alpha^*}^k+(c,0))\alpha^l \in P$, which implies that either $\alpha \in P$ or $({\alpha^*}^k+(c,0))\in P$. Moreover, if $({\alpha^*}^k+(c,0))\in P$ then by (ii) in Proposition \ref{prop: congbasic} we have that ${\alpha^*}^k \in P$ and by Proposition \ref{prop: *basic} $\alpha^* = (\alpha_1+\alpha_2,0) \in P$, now applying (i) from Proposition \ref{prop: congbasic} we get that $(\alpha_1,0) \in P$ and $(\alpha_2,0) \in P$ so $\alpha \in P$.
\end{proof}

Now we prepare to show that the reverse implication holds as well. We need the following two lemmas:

\begin{lemma}\label{lemma: xcong0}
Let $x \in A$ be an arbitrary element and $I = \<(x,0)\>$. Then $(y,z) \in I$ if and only if there exist an $r\in A$ such that $y+rx = z + rx$.
\end{lemma}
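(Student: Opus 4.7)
Define the candidate set
\[
J \;=\; \{(y,z)\in A\times A \mid \exists\, r\in A \text{ with } y + rx = z + rx\},
\]
and aim to show $J = I$. The strategy has two halves: first I would verify directly that $J\subseteq I$, and then I would verify that $J$ is a congruence of $A$ containing the generating pair $(x,0)$, which forces $I\subseteq J$ by minimality of $I = \langle (x,0)\rangle$.

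For the inclusion $J\subseteq I$, given $(y,z)\in J$ with witness $r$, Proposition \ref{prop: congbasic}(i) applied to $(x,0)\in I$ and the pair $(r,0)$ yields $(rx,0)\in I$. Adding the reflexive pair $(y,y)\in I$ gives $(y+rx,y)\in I$, and similarly $(z+rx,z)\in I$; combined with the equality $y+rx=z+rx$ and transitivity of $I$, this produces $(y,z)\in I$. Conversely, $(x,0)\in J$ is immediate by choosing $r=1$, since idempotency of addition gives $x + 1\cdot x = x = 0 + 1\cdot x$.

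It remains to check that $J$ satisfies the congruence axioms (C1)--(C5). Reflexivity uses $r=0$ and symmetry is clear from the definition. For transitivity, if $y+rx = z+rx$ and $z+sx = w+sx$, then adding $sx$ to the first and $rx$ to the second and chaining the resulting equalities yields $y + (r+s)x = w + (r+s)x$, so $(y,w)\in J$. Additive compatibility (C4) is handled by summing the two witnessing equalities. The only step that is not formal is multiplicative compatibility (C5), and I expect this to be the main obstacle: given $(y,z),(y',z')\in J$ with witnesses $r,r'$, I would route the argument through the intermediate element $zy'$. Multiplying $y+rx = z+rx$ by $y'$ shows $(yy',zy')\in J$ with witness $ry'$, while multiplying $y'+r'x=z'+r'x$ by $z$ shows $(zy',zz')\in J$ with witness $zr'$; then the transitivity of $J$ just established closes the argument, giving $(yy',zz')\in J$. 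Once C5 is in hand, $J$ is a congruence containing $(x,0)$, so $I\subseteq J$ and the two inclusions combine to give the lemma.
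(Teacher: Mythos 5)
Your proof is correct and follows essentially the same strategy as the paper: define $J$ as the stated set, observe that $J\subseteq I$ and $(x,0)\in J$, and then verify that $J$ is a congruence so that $I\subseteq J$ by minimality. Your verification of (C5) routes through the same intermediate element ($zy'$, i.e.\ $zv$ in the paper's notation) that the paper uses, just split into two applications of transitivity rather than exhibiting the single combined witness $vr+zs$ directly.
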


\begin{proof}
Let $J$ be the set of pairs $(y,z)$ such that  there exist an $r\in A$ such that $y+rx = z + rx$. Clearly $(x,0) \in J$ and $J \subseteq I$, so it is enough to show that $J$ is a congruence.  C1 and C2 hold trivially. For C3 assume that $y+rx = z + rx$ and $z+sx = v + sx$, then we have $y+(r+s)x = z + (r+s)x = v+(r+s)x$ giving us $(y,v) \in J$. For C4 and C5 assume that $y+rx = z + rx$ and $v+rx = w + rx$ then we have $y+v + (r+s)x = v+w + (r+s)x$ and $yv + (vr+zs)x = zv + (vr+zs)x = zw + (vr+zs)x$ showing that both conditions hold. 
\end{proof}

\begin{lemma}\label{lem: genps in diag}
If for some $c,x \in A$ and a pair $\alpha$ from $A$ we have that  \[({\alpha^*}+(c,0))\alpha \in \<(x,0)\> \cap Ann(x)\] then there exists a $b\in A$ such that $({\alpha^*}^3+(b,0))\alpha \in \diag$.
\end{lemma}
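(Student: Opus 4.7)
Set $s := \alpha_1 + \alpha_2$, so that $\alpha^* = (s,0)$ and ${\alpha^*}^3 = (s^3,0)$. The plan is to unpack the hypothesis into two concrete scalar identities in $A$ and then to produce the element $b$ by a direct guess. By Lemma \ref{lemma: xcong0}, the assumption $(\alpha^*+(c,0))\alpha \in \<(x,0)\>$ provides some $r \in A$ with
\[(\mathrm{B}) \qquad (s+c)\alpha_1 + rx = (s+c)\alpha_2 + rx,\]
while $(\alpha^*+(c,0))\alpha \in Ann(x)$ reads
\[(\mathrm{A}) \qquad x(s+c)\alpha_1 = x(s+c)\alpha_2.\]
The target $({\alpha^*}^3 + (b,0))\alpha \in \diag$ unpacks to the single identity $(s^3+b)\alpha_1 = (s^3+b)\alpha_2$ for some $b \in A$.

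The first move is to multiply $(\mathrm{B})$ by $s^2$, producing
\[(\ast) \qquad (s^3+s^2c)\alpha_1 + rs^2 x = (s^3+s^2c)\alpha_2 + rs^2 x,\]
so only the symmetric correction $rs^2 x$ separates us from the target. The guess I would try is $b := s^2 c + Cx$ with $C := r(s+c)$: the $s^2c$ piece is forced by $(\ast)$, and $C$ is chosen so that $(\mathrm{A})$ multiplied by $r$ gives $Cx\alpha_1 = Cx\alpha_2$ directly. Writing $D$ for this common value, additive idempotence forces
\[D = D+D = Cx\alpha_1 + Cx\alpha_2 = Cxs = rxs^2+rxsc,\]
so $D$ dominates $rs^2 x = rxs^2$, meaning $D + rs^2 x = D$.

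With these two ingredients in hand the remainder is bookkeeping. Adding $Cx\alpha_1$ to both sides of $(\ast)$ and using $Cx\alpha_1 = D \geq rs^2 x$ collapses the $rs^2 x$ on the left into $Cx\alpha_1$, turning the left side into $(s^3+s^2c+Cx)\alpha_1 = (s^3+b)\alpha_1$; on the right the same absorption followed by the substitution $Cx\alpha_1 = Cx\alpha_2$ yields $(s^3+b)\alpha_2$, and the two sides remain equal since $(\ast)$ did before the addition. The main obstacle in this plan is spotting the right $b$: the term $rs^2 x$ cannot be absorbed into an $\alpha_i$-multiple individually, so $b$ must be engineered so that $bx\alpha_1$ and $bx\alpha_2$ are simultaneously forced to coincide via $(\mathrm{A})$ and to dominate $rs^2 x$; the choice $C = r(s+c)$ is essentially the minimal common multiplier that makes both requirements compatible.
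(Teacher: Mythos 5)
Your proof is correct and lands on the same $b$ as the paper (writing $y = rx$, your $b = s^2 c + r(s+c)x = c(\alpha_1+\alpha_2)^2 + y(\alpha_1+\alpha_2+c)$ is literally the paper's choice). The verification route is slightly tidier: instead of expanding $\beta_1, \beta_2$ fully and checking term-by-term dominance as the paper does, you multiply the $\langle(x,0)\rangle$-identity by $s^2$, then use the $Ann(x)$-identity (scaled by $r$) to produce a single symmetric term $D = Cx\alpha_1 = Cx\alpha_2$ that both equalizes the two sides and absorbs the leftover $rs^2x$. Essentially the same approach, cleanly packaged.
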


\begin{proof}
Since  $({\alpha^*}+(c,0))\alpha \in \<(x,0)\>$ by Lemma \ref{lemma: xcong0} we have that for some $r \in A$ \[\alpha_1^2+\alpha_1\alpha_2+c\alpha_1 + rx = \alpha_2^2+\alpha_1\alpha_2+c\alpha_2 + rx \]
Let $y = rx$. By $({\alpha^*}+(c,0))\alpha \in Ann(x)$ we have that \[y(\alpha_1^2+\alpha_1\alpha_2+c\alpha_1) = y(\alpha_2^2+\alpha_1\alpha_2+c\alpha_2). \]
Set $b=y(\alpha_1 + \alpha_2 + c) + c(\alpha_1+\alpha_2)^2$, and $\beta = ({\alpha^*}^3+(b,0))\alpha$. After expanding we get: \[\beta_1 = \sum_{i=1}^{4}\alpha_1^i\alpha_2^{(4-i)}+y(\alpha_1^2 + \alpha_1\alpha_2+c\alpha_1)+c( \sum_{i=1}^{3}\alpha_1^i\alpha_2^{(3-i)})\] \[\beta_2 = \sum_{i=1}^{4}\alpha_2^i\alpha_1^{(4-i)}+y(\alpha_2^2 + \alpha_1\alpha_2+c\alpha_2)+c( \sum_{i=1}^{3}\alpha_2^i\alpha_1^{(3-i)})\]
The terms appearing in $\beta_2$ but not in $\beta_1$ are $\alpha_2^4,\:y\alpha_2^2,\:yc\alpha_2,\:c\alpha_2^3$. However we have: \[\beta_1 \geq y(\alpha_1^2 + \alpha_1\alpha_2+c\alpha_1) = y(\alpha_2^2 + \alpha_1\alpha_2+c\alpha_2) \geq y\alpha_2^2 + yc\alpha_2\] 
It follows that \[\beta_2 \geq \alpha_2^2(\alpha_1^2 +\alpha_1\alpha_2 + c\alpha_1 + y) =  \alpha_2^2(\alpha_2^2 +\alpha_1\alpha_2 + c\alpha_2 + y) \geq \alpha_2^4 + c\alpha_2^3 \]
showing us $\beta_1 \geq \beta_2$ and by symmetry $\beta_1 = \beta_2$, so indeed $\beta \in \diag$.
\end{proof}

We are ready to prove:

\begin{theorem}\label{thm:radical}
For any congruence $I$ of a $\mb$-algebra A, we have that \[Rad(I) = \{\alpha \mid \gp(\alpha) \cap I \neq \emptyset \}.\] In particular the intersection of every prime congruence of $A$ is precisely the set of nilpotent pairs.
\end{theorem}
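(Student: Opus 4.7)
My plan is to prove the displayed equality of sets; the second sentence of the theorem is just the case $I=\diag$.

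The inclusion ``$\supseteq$'' is a direct generalization of Proposition~\ref{prop: Nilpotents in radical}. If $({\alpha^*}^k+(c,0))\alpha^l \in I$, then this pair lies in every prime $P\supseteq I$, so by primality either $\alpha^l\in P$ or $({\alpha^*}^k+(c,0))\in P$. In the first case, repeated use of the prime property gives $\alpha\in P$. In the second case, Proposition~\ref{prop: congbasic}(iii) applied to the pair $((\alpha_1+\alpha_2)^k+c,0)$ forces ${\alpha^*}^k\in P$, whence ${\alpha^*}\in P$ by primality, whence $(\alpha_1,0),(\alpha_2,0)\in P$ by Proposition~\ref{prop: congbasic}(iii) again, and so $\alpha\in P$. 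Thus $\alpha\in Rad(I)$.

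For the reverse inclusion I argue by contrapositive. Suppose $\gp(\alpha)\cap I=\emptyset$ and consider the family $\mathcal{F}$ of congruences $J$ with $I\subseteq J$ and $\gp(\alpha)\cap J=\emptyset$. It is nonempty and closed under unions of chains (a single element of a chain union lies in some member of the chain), so Zorn's lemma produces a maximal element $P$. Since $\alpha=({\alpha^*}^0+(0,0))\alpha\in\gp(\alpha)$, we have $\alpha\notin P$; it remains to show that $P$ is prime. Passing to the quotient $A/P$, I may assume $P=\diag$ and hence that \emph{every strictly larger congruence of $A$ meets $\gp(\alpha)$}.

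The proof of primeness proceeds in two stages. First, I claim $A$ is cancellative. If not, there exist $a\neq 0$ and a non-diagonal pair $\mu$ with $a\mu\in\diag$. Then $\langle(a,0)\rangle$ and $Ann_A(a)$ (which is a congruence by Proposition~\ref{prop:primeprop}(i)) are both strictly larger than $\diag$, so by maximality each meets $\gp(\alpha)$, giving generalized powers $\rho_1\in\langle(a,0)\rangle$ and $\rho_2\in Ann_A(a)$. Their twisted product $\rho=\rho_1\rho_2$ is again a generalized power of $\alpha$ and lies in $\langle(a,0)\rangle\cap Ann_A(a)$, hence so does $\rho^*\rho$. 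Applying Lemma~\ref{lem: genps in diag} with its pair $\alpha$ replaced by $\rho$ and with $c=0$, $x=a$ yields a generalized power of $\rho$ — hence of $\alpha$ — in $\diag$, contradicting $\gp(\alpha)\cap\diag=\emptyset$. Knowing $A$ is cancellative, Lemma~\ref{lem:cancellative annulator} promotes $Ann_A(\nu)$ to a congruence for every pair $\nu$. Suppose $A$ fails to be a domain: there exist non-diagonal $\mu,\nu$ with $\mu\nu\in\diag$. Then $Ann_A(\nu)$ is nontrivial (it contains $\mu$) and $\langle\nu\rangle$ is nontrivial, so by maximality each contains a generalized power of $\alpha$, say $\rho_1\in Ann_A(\nu)$ and $\rho_2\in\langle\nu\rangle$. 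From $\rho_1\nu\in\diag$ we get $\nu\in Ann_A(\rho_1)$, which is itself a congruence, so $\langle\nu\rangle\subseteq Ann_A(\rho_1)$ and hence $\rho_1\rho_2\in\diag$. But $\rho_1\rho_2\in\gp(\alpha)$, again contradicting $\gp(\alpha)\cap\diag=\emptyset$. Therefore $P$ is prime.

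The main obstacle is the cancellativity step, because Lemma~\ref{lem: genps in diag} is stated only for the restricted shape $(\alpha^*+(c,0))\alpha$, not for an arbitrary generalized power. The key maneuver is to apply the lemma not to $\alpha$ itself but with its role of $\alpha$ played by the product $\rho$ already sitting inside $\langle(a,0)\rangle\cap Ann_A(a)$, exploiting the fact that generalized powers of generalized powers of $\alpha$ remain generalized powers of $\alpha$.
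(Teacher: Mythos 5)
Your proof is correct, and it takes a genuinely different internal route from the paper's, though both rest on the same foundations (Zorn's lemma plus Lemma~\ref{lem: genps in diag}). After the identical Zorn step producing a maximal $P\supseteq I$ disjoint from $\gp(\alpha)$, the paper shows $P$ is intersection indecomposable and then invokes Theorem~\ref{thm: ind QC are prime} to conclude that if $P$ were not prime it would fail to be QC, producing a single element $({\alpha^*}^k+(c,0))\alpha^l\in\gp(\alpha)\cap Ann(x)\cap\langle(x,0)\rangle$; after rebalancing $k$ and $l$, Lemma~\ref{lem: genps in diag} applied to $\alpha^k$ gives the contradiction. You instead split the primality of $P$ into two direct stages: first cancellativity (using the lemma with $\rho^*\rho$ in place of the $(\alpha^*+(c,0))\alpha$ shape, which elegantly avoids the $k=l$ normalization), then domain-ness via a short argument with $Ann$ of pairs that, notably, does not invoke the lemma at all — only the fact from Lemma~\ref{lem:cancellative annulator} that annihilators of pairs are congruences once cancellativity is in hand. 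The payoff of your route is that it does not depend on Theorem~\ref{thm: ind QC are prime} (nor on Lemma~\ref{cancellative powers}), so it slightly decouples this theorem from the intersection-indecomposable characterization of primes; the paper's route is a bit shorter because it reuses machinery it has already built. One presentational remark: in your forward inclusion the cases $k=0$ or $l=0$ deserve a word — $\alpha^0=(1,0)$ can never lie in a proper prime, so when $l=0$ one is automatically in the second branch, and when $k=0$ the second branch leads to $(1,0)\in P$ and is therefore vacuous — but this is a routine check and does not affect correctness.
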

\begin{proof}
Note that the intersection of all prime congruences is $Rad(\diag)$. We can reduce to the case $I=\diag$ after considering the quotient $A/I$. Proposition \ref{prop: Nilpotents in radical} tells us that the nilpotent elements are contained in $Rad(\diag)$, for the other direction we have to show that for a non-nilpotent pair $\alpha$ there is a prime congruence $P$ such that $\alpha \notin P$. We have that $\gp(\alpha) \cap \diag = \emptyset$. By Zorn's lemma there is a congruence $J$ that is maximal amongst the congruences that are disjoint from $\gp(\alpha)$. If $J$ is prime we are done. Assume $J$ is not prime, we first show that $J$ is intersection indecomposable. Assume the contrary $J = K \cap L$ for some congruences $J \subsetneq K,L$. Then the maximality of $J$ implies that there exists a $\beta \in K \cap \gp(\alpha)$ and a  $\gamma \in L \cap \gp(\alpha)$, but then $\beta\gamma \in L\cap K \cap \gp(\alpha) = J \cap \gp(\alpha)$ a contradiction. So J is not prime but intersection indecomposable, then it follows from Theorem \ref{thm: ind QC are prime} that $J$ is not QC.  Thus there exists a non-zero $x \in A/J$ such that $Ann_{A/J}(x) \supset  \diag_{A/J}$.  Let $K$ be the congruence generated by $(x,0)$ in $A/J$.  Again by maximality, we have that every non-trivial congruence in $A/J$ contains some element of $\gp(\alpha)$, so in particular for some $k,l,c$ we have an element $({\alpha^*}^k+(c,0))\alpha^l \in \gp(\alpha) \cap Ann_{A/J}(x) \cap K$. After multiplying with some power of $\alpha^*$ or $\alpha$ (depending on which of $k$ or $l$ is larger) we can assume that $k=l$. Now we can apply Lemma \ref{lem: genps in diag} for the pair $\alpha^k$ and the semiring $A/J$ and obtain that for some $b$ we have $({\alpha^*}^{3k}+(b,0))\alpha^k \in J$ contradicting $\gp(\alpha) \cap J = \emptyset$.
\end{proof}

We conclude this section by a list of corollaries of the above theorem.

\begin{proposition}
QC congruences are radical congruences.
\end{proposition}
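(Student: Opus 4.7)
The plan is to unwind both sides of the desired implication so that the problem becomes a direct calculation. By definition $I$ is radical iff $Rad(I) \subseteq I$, and Theorem \ref{thm:radical} characterizes $Rad(I) = \{\alpha \mid \gp(\alpha) \cap I \neq \emptyset\}$. So it suffices to show that if $A/I$ is cancellative and some generalized power of $\alpha$ lies in $I$, then $\alpha \in I$. Passing to the quotient $A/I$, which preserves both cancellativity and the shape of a generalized power (since $(-)^*$ and twisted product commute with any semiring morphism), I may assume $A$ itself is cancellative and that $({\alpha^*}^k + (c,0))\alpha^l \in \diag$ for some $k,l \geq 0$ and $c \in A$, and try to conclude $\alpha \in \diag$.

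The key point is that a generalized power has a very constrained form. Since $\alpha^* = (\alpha_1+\alpha_2, 0)$, twisted powers of $\alpha^*$ satisfy ${\alpha^*}^k = ((\alpha_1+\alpha_2)^k, 0)$, so ${\alpha^*}^k + (c,0) = ((\alpha_1+\alpha_2)^k + c,\, 0)$. Its twisted product with $\alpha^l$ is the componentwise scaling $\bigl(((\alpha_1+\alpha_2)^k + c)\,(\alpha^l)_1,\; ((\alpha_1+\alpha_2)^k + c)\,(\alpha^l)_2\bigr)$. Membership in $\diag$ says the two scaled coordinates agree, and cancellativity in $A$ then forces one of two alternatives: either the common scalar $(\alpha_1+\alpha_2)^k + c$ vanishes, or $(\alpha^l)_1 = (\alpha^l)_2$, i.e.\ $\alpha^l \in \diag$.

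In the second alternative, Lemma \ref{cancellative powers} directly yields $\alpha \in \diag$ for $l \geq 1$; the case $l = 0$ is vacuous since $\alpha^0 = (1,0) \notin \diag$. In the first alternative, the elementary observation that in a $\mb$-algebra $a+b = 0$ implies $a = b = 0$ (because idempotence gives $a = a+0 = a+(a+b) = a+b = 0$) forces $(\alpha_1+\alpha_2)^k = 0$. Cancellativity rules out zero divisors, so unwinding the $k$-th power gives $\alpha_1 + \alpha_2 = 0$, whence $\alpha_1 = \alpha_2 = 0$ and $\alpha \in \diag$.

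The main obstacle is really just the bookkeeping: one has to recognize that the generalized power factors as (one-sided) scalar times $\alpha^l$, so the cancellative hypothesis bites exactly as needed. Handling the corner cases $k=0$ or $l=0$ and the zero-scalar case, by means of the idempotent addition identity above together with Lemma \ref{cancellative powers}, completes the argument; no deeper construction is required.
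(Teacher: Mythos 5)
Your proof is correct and follows essentially the same strategy as the paper's: reduce to the trivial congruence in the quotient, observe via Theorem \ref{thm:radical} that it suffices to show a generalized power of $\alpha$ lying in $\diag$ forces $\alpha \in \diag$, and then apply the cancellativity dichotomy (Proposition \ref{prop:primeprop}(iii)) to the factorization $({\alpha^*}^k+(c,0))\alpha^l = (s,0)\alpha^l$. The only divergence is in the branch where the scalar $(s,0) \in \diag$: the paper chains through Propositions \ref{prop: congbasic} and \ref{prop: *basic} and then Lemma \ref{cancellative powers}, while you close it more directly by noting that idempotent addition forces $(\alpha_1+\alpha_2)^k = 0$, that cancellative $\mb$-algebras have no zero divisors, and hence $\alpha_1 = \alpha_2 = 0$ — a slightly more elementary route to the same conclusion.
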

\begin{proof}
By considering the appropriate quotients it is enough to prove the theorem for the case when the congruence is the trivial congruence. We have to show that if for some pair $\alpha$ we have $\gp(\alpha) \cap \diag \neq \emptyset$ then $\alpha \in \diag$. Suppose that for some $k,l$ we have  $({\alpha^*}^k+(c,0))\alpha^l \in \diag$. Then by cancellativity either $\alpha^l\in \diag$ and then by Lemma \ref{cancellative powers} $\alpha \in \diag$, or $({\alpha^*}^k+(c,0)) \in \diag$ and then from Proposition \ref{prop: congbasic} it follows that ${\alpha^*}^k \in \diag$  which in turn by Proposition \ref{prop: *basic} implies that $\alpha^k \in \diag$, and finally by Lemma \ref{cancellative powers} that $\alpha \in \diag$.
\end{proof}

Let us denote by $\gann_A(\alpha)$ the set $\{\beta\mid \gp(\alpha\beta) \cap \diag \neq \emptyset\}$.

\begin{proposition}\label{prop:rad corol1}
Let A be an arbitrary  $\mb$-algebra and $\alpha \in A\times A$ a pair. 
\begin{itemize}
\item[(i)] $\gann_A(\alpha)$ is the intersection of all prime congruences not containing $\alpha$ (where by empty intersection we mean the full set $A \times A$), in particular $\gann_A(\alpha)$ is a congruence.
\item[(ii)] If $\diag$ is a radical congruence then $Ann_A(\alpha) = \gann_A(\alpha)$, in particular $Ann_A(\alpha)$ is a congruence.
\end{itemize}
\end{proposition}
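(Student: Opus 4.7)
The plan is to reduce both claims to Theorem \ref{thm:radical} and then exploit the defining property of prime congruences. The crucial observation is that, by applying Theorem \ref{thm:radical} to the trivial congruence of $A$, we have
\[
\beta \in \gann_A(\alpha) \iff \gp(\alpha\beta) \cap \diag \neq \emptyset \iff \alpha\beta \in Rad(\diag),
\]
that is, $\alpha\beta$ lies in every prime congruence of $A$.

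For part (i), I would next use the prime property itself: for any prime $P$, $\alpha\beta \in P$ is equivalent to $\alpha \in P$ or $\beta \in P$. Combining with the observation above, $\beta \in \gann_A(\alpha)$ is equivalent to the statement that for every prime $P$, either $\alpha \in P$ or $\beta \in P$, which is exactly the condition that $\beta$ lies in every prime $P$ that does \emph{not} contain $\alpha$. Hence $\gann_A(\alpha)$ equals the intersection of all primes not containing $\alpha$. The edge case is when every prime of $A$ contains $\alpha$; then the intersection is declared to be $A\times A$, and on the other side $\alpha \in Rad(\diag)$, so since $Rad(\diag)$ is a congruence (being an intersection of congruences), Proposition \ref{prop: congbasic}(i) gives $\alpha\beta \in Rad(\diag)$ for every $\beta$, so $\gann_A(\alpha) = A\times A$ as well. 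In both cases, $\gann_A(\alpha)$ is an intersection of congruences, and hence a congruence.

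For part (ii), if $\diag$ is radical then $Rad(\diag) = \diag$, so the translation gives
\[
\beta \in \gann_A(\alpha) \iff \alpha\beta \in Rad(\diag) = \diag \iff \beta \in Ann_A(\alpha),
\]
so $Ann_A(\alpha) = \gann_A(\alpha)$, which is a congruence by part (i).

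There is no real obstacle here beyond the small bookkeeping of the empty-intersection convention in (i); the whole proof consists of rewriting definitions through Theorem \ref{thm:radical} and invoking the prime property.
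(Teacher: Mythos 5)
Your proof is correct and takes essentially the same route as the paper: translate $\gann_A(\alpha)$ via Theorem~\ref{thm:radical} into membership of $\alpha\beta$ in $Rad(\diag)=\bigcap_P P$, then apply the prime property (together with Proposition~\ref{prop: congbasic}(i) for the converse implication) to get the characterization, and specialize to the radical-$\diag$ case for (ii). Your explicit handling of the empty-intersection edge case is a small addition that the paper leaves implicit, but the argument is otherwise the same.
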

\begin{proof}
First let $\beta \in \gann_A(\alpha)$. Then by Theorem \ref{thm:radical}, we have that $\alpha\beta \in Rad(\diag) = \bigcap_{P\:prime}P$, so by the prime property every prime that does not contain $\alpha$ needs to contain $\beta$. For the other direction let $\beta$ be an element of every prime congruence that does not contain $\alpha$, then $\alpha\beta$ is contained in every prime and by Theorem \ref{thm:radical} $\gp(\alpha\beta) \cap \diag \neq \emptyset$. The second half of the statement follows from the fact that if $\diag$ is a radical congruence then $\gp(\alpha\beta) \cap \diag \neq \emptyset$ implies $\alpha\beta \in \diag$.
\end{proof}

While it might appear that Proposition \ref{prop:rad corol1} provides a simpler proof for Lemma \ref{lem:cancellative annulator} and Theorem \ref{thm: ind QC are prime}, but we remind the reader that  Theorem \ref{thm: ind QC are prime} was used in the proof of Theorem \ref{thm:radical} which in turn we used to prove Proposition \ref{prop:rad corol1}.

\begin{proposition}\label{prop: radindec}
A congruence is prime if and only if it is radical and intersection indecomposable.
\end{proposition}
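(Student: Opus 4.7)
The plan is to split the equivalence cleanly. The forward direction is essentially free from what has already been proved: if $P$ is prime, Proposition~\ref{prop:primes are indec} gives intersection indecomposability, and $P = Rad(P)$ holds because $P$ itself is one of the primes containing $P$ (so $Rad(P) \subseteq P$) while $P$ is contained in every prime that contains it (so $P \subseteq Rad(P)$).

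For the converse I would assume $I$ is radical and intersection indecomposable (implicitly proper, as discussed at the end of this plan) and argue by contradiction. Suppose there are pairs $\alpha, \beta \notin I$ with $\alpha\beta \in I$, and let $S$ denote the set of prime congruences containing $I$, so that $I = Rad(I) = \bigcap_{P \in S} P$ by hypothesis. Since each $P \in S$ contains $\alpha\beta$ and is prime, $S$ decomposes as $S = S_\alpha \cup S_\beta$, where
\[
S_\alpha = \{P \in S \mid \alpha \in P\}, \qquad S_\beta = \{P \in S \mid \beta \in P\}.
\]
Because $\alpha \notin I = \bigcap_{P \in S} P$, there exists some $P \in S$ with $\alpha \notin P$, and then primeness forces $\beta \in P$; hence $S_\beta \neq \emptyset$, and symmetrically $S_\alpha \neq \emptyset$. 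Setting $J = \bigcap_{P \in S_\alpha} P$ and $K = \bigcap_{P \in S_\beta} P$, both $J$ and $K$ are congruences containing $I$, with $\alpha \in J \setminus I$ and $\beta \in K \setminus I$, so the inclusions $I \subsetneq J$ and $I \subsetneq K$ are strict. On the other hand $J \cap K = \bigcap_{P \in S} P = I$, contradicting intersection indecomposability.

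There is no real obstacle here, since Theorem~\ref{thm:radical} already gives the description $I = \bigcap_{P \in S} P$ that drives the argument; the only subtle point is the boundary case $S = \emptyset$, which occurs precisely when no prime contains $I$, i.e.\ when $I$ is the improper congruence $A \times A$. Such $I$ is vacuously both radical and intersection indecomposable but not prime, so the statement is to be read as a claim about proper congruences, consistent with the convention (used in Theorem~\ref{thm: ind QC are prime}) that only proper congruences are candidates for primeness.
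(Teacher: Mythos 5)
Your proof is correct, and the converse direction takes a genuinely different route from the paper's. The paper proves the converse by rerunning the annihilator argument of Theorem~\ref{thm: ind QC are prime}: given a radical, intersection-indecomposable $\diag$ that is not prime, one picks $\alpha \notin \diag$ with $Ann_A(\alpha) \neq \diag$, invokes Proposition~\ref{prop:rad corol1}~(ii) (so $Ann_A(\alpha) = \gann_A(\alpha)$ is a congruence because $\diag$ is radical), forms $Q = \bigcap_{\beta \in Ann_A(\alpha)} Ann_A(\beta)$, and then replaces the appeal to Lemma~\ref{cancellative powers} by the observation that $\beta^2 \in \diag$ forces $\beta \in \diag$ directly from radicality. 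Your argument bypasses all of that machinery: writing $I = Rad(I) = \bigcap_{P \in S} P$ and splitting $S$ into $S_\alpha$ and $S_\beta$ according to which factor each prime swallows produces a decomposition $I = J \cap K$ with both $J$ and $K$ strictly larger (nonemptiness of $S_\alpha$ and $S_\beta$ being exactly the content of $\alpha,\beta \notin I$). This is the classical "irreducible radical ideals are prime" argument, and it uses only the definitions of prime and radical rather than the twisted-power characterization of $Rad$. It is arguably cleaner and more self-contained than the paper's proof-by-reference. Your remark about the improper congruence being excluded is also correct and matches the paper's convention that primes are proper.
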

\begin{proof}
Prime congruences are radical by definition and intersection indecomposable by Proposition  \ref{prop:primes are indec}. For the other direction we can argue the same way as in the proof of Theorem \ref{thm: ind QC are prime}, except that this time $\beta^2 \in \diag$ implies $\beta \in \diag$ simply by the definition of a radical congruence.
\end{proof}

\subsection{Semialgebras satisfying the ACC}\label{sec:ACC}

While most of the algebras in this paper do not satisfy the ascending chain condition (ACC) for congruences, we make a few remarks about the ones that do satisfy it. Firstly, we have the following statement from ring theory that holds in this setting. The argument for it is essentially the same as in the classical case.

\begin{proposition}\label{prop: finiteminprimes}
Let $A$ be a  $\mb$-algebra with no infinite ascending chain of radical congruences. Then over every congruence there are finitely many minimal primes.
\end{proposition}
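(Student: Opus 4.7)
My plan is to mimic the classical noetherian argument from commutative algebra, using Proposition \ref{prop: radindec} in place of the usual reduction to irreducible ideals. Let $S$ denote the collection of radical congruences of $A$ over which there exist infinitely many minimal primes, and argue by contradiction, assuming $S \neq \emptyset$. Since by hypothesis the set of radical congruences of $A$ satisfies the ACC, $S$ admits a maximal element $I$: starting from any element of $S$ and choosing strict enlargements within $S$ as long as possible produces a chain that must terminate, and the terminal element is maximal.

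Next I would argue that $I$ itself cannot be prime, since otherwise $I$ would be its own unique minimal prime. Being radical but not prime, $I$ is not intersection indecomposable by Proposition \ref{prop: radindec}, so I can write $I = J_0 \cap K_0$ with $J_0, K_0 \supsetneq I$. To stay inside the class of radical congruences (where the ACC hypothesis applies), I replace $J_0, K_0$ with $J := Rad(J_0)$ and $K := Rad(K_0)$; these are radical and still strictly contain $I$.

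The key reduction is to show that every minimal prime over $I$ is a minimal prime over $J$ or over $K$. Given any prime $P \supseteq I = J_0 \cap K_0$, if $\alpha \in J_0 \setminus P$ and $\beta \in K_0$, then Proposition \ref{prop: congbasic}(i) gives $\alpha\beta \in J_0 \cap K_0 \subseteq P$, and primality forces $\beta \in P$. Hence $J_0 \subseteq P$ or $K_0 \subseteq P$, and since $P$ is radical, $J \subseteq P$ or $K \subseteq P$. When $P$ is moreover minimal over $I$ and contains, say, $J$, any prime between $J$ and $P$ also contains $I$, forcing $P$ to be minimal over $J$ too. Consequently the minimal primes over $I$ are contained in the union of the minimal primes over $J$ and those over $K$; by maximality of $I$ in $S$, both $J$ and $K$ have only finitely many minimal primes, contradicting the choice of $I$.

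The main obstacle, and essentially the only place where the argument diverges from the ring-theoretic template, is the need to replace $J_0, K_0$ with their radicals so that the ACC hypothesis on radical congruences can actually be invoked at the induction step. This is harmless once one notes that any prime containing $J_0$ automatically contains $Rad(J_0) = J$, so passing from $J_0, K_0$ to $J, K$ loses no primes in the containment relation exploited above.
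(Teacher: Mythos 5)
Your proof is correct and follows essentially the same route as the paper: reduce to radical congruences, take a maximal bad one, invoke Proposition \ref{prop: radindec} to decompose it as an intersection, and use the prime property to distribute minimal primes among the factors. The only substantive difference is cosmetic: you make explicit the passage to $Rad(J_0), Rad(K_0)$ so the ACC on radical congruences applies (which the paper leaves slightly implicit), while you leave slightly implicit the initial observation that minimal primes over $I$ coincide with those over $Rad(I)$ (which the paper states up front); both gaps are trivially closed by noting a prime contains a congruence iff it contains its radical.
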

\begin{proof}
The primes lying over a congruence $I$ are the same as the primes lying over $Rad(I)$, so it is enough to prove the statement for radical congruences.  Assume that there are radical congruences of $A$ with infinitely many minimal primes lying over them, and let $J$ be a maximal congruence amongst these. Since $J$ is not prime then by Proposition \ref{prop: radindec} it is the intersection of two strictly larger congruences $K$ and $L$. Then every prime containing $J$ contains at least one of $K$ and $L$ so the minimal primes lying over $J$ are amongst those that are minimal over $K$ or $L$ and by the maximality of $J$ there is only finitely many of these. 
\end{proof}

One can define primary congruences in the following way:

\begin{definition}{\rm
We will call a congruence $I$ of a $\mb$-algebra $A$ primary if $\{\alpha\mid \exists \beta\notin I :\alpha\beta \in I \}\subseteq Rad(I)$.
}
\end{definition}

As one would expect this class satisfies the following property:

\begin{proposition}
The radical of a primary congruence is a prime congruence.
\end{proposition}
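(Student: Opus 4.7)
The plan is to pass to the quotient $A/I$, so that without loss of generality $I = \diag$ is a primary congruence; that is, whenever $\alpha\beta\in\diag$ and $\beta\notin\diag$, the pair $\alpha$ is nilpotent in the sense of Theorem~\ref{thm:radical}. Since $Rad(\diag)$ is automatically a radical congruence, Proposition~\ref{prop: radindec} would let us reduce to proving intersection indecomposability, but the cleanest route is to verify the prime property directly: if $\alpha\beta$ is nilpotent and $\beta$ is not, then $\alpha$ is nilpotent. The argument will apply the primariness hypothesis in a cascade, alternating between the pair and element levels, and will repeatedly use the basic observation that in a $\mb$-algebra a sum of elements equal to $0$ forces every summand to be $0$.

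By Theorem~\ref{thm:radical} and Proposition~\ref{prop: *basic}, the nilpotency of $\alpha\beta$ manifests as an identity $\gamma = ((AB)^k + c, 0)(\alpha\beta)^l \in \diag$ with $A = \alpha_1+\alpha_2$, $B = \beta_1+\beta_2$; set $u := (AB)^k + c$. Factor $\gamma = [(u, 0)\alpha^l]\cdot \beta^l$; since $\beta$ is not nilpotent the pair $\beta^l$ (itself a generalized power of $\beta$) lies outside $\diag$, so primariness forces $(u, 0)\alpha^l$ to be nilpotent. Unpacking this nilpotency via Theorem~\ref{thm:radical} and using the computations $((u, 0)\alpha^l)^* = (uA^l, 0)$ and $((u, 0)\alpha^l)^N = (u^N, 0)\alpha^{lN}$, we obtain a pair of the special form $(w, 0)\alpha^{lN} \in \diag$, where $w = u^{M+N}A^{lM} + du^N$ for suitable $M, N, d$.

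Another pass of primariness (unless $\alpha^{lN} \in \diag$, which would already prove $\alpha$ nilpotent) shows $(w, 0)$ is nilpotent; as this pair has second coordinate $0$, a direct generalized-power computation makes this equivalent to $w$ being nilpotent as an element of $A$. Expanding $w^K = 0$ and forcing each summand to vanish yields $u^{(M+N)K}A^{lMK} = 0$; primariness applied at the element level (with $A^{lMK}\ne 0$, for otherwise $A$, and hence $\alpha^{*L} = (A^L, 0)$, is already nilpotent) shows $u$ is nilpotent, and expanding $((AB)^k+c)^P = 0$ analogously produces $A^{kP}B^{kP} = 0$. A final application of primariness, with $B^{kP}\ne 0$ forced by the hypothesis that $\beta$ is not nilpotent, gives $A^{kP}$ nilpotent, whence $A$ is nilpotent and $\alpha^{*L} = (A^L, 0) = 0 \in \diag$ exhibits a generalized power of $\alpha$. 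The main difficulty is bookkeeping: each pair-level primariness step produces a new generalized-power identity that must be expanded before the next step, and what makes the cascade work is the fact that every pair produced along the way is of the form $(\text{element}, 0)\cdot(\text{twisted power})$, which ultimately reduces everything to element-level equations where the idempotent semiring structure can take over.
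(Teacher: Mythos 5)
Your proof is correct, and it uses the same two ingredients as the paper's: Theorem~\ref{thm:radical} to translate membership in $Rad$ into a generalized-power identity, and the primary hypothesis to strip a non-nilpotent factor off a pair lying in the congruence. The paper's own argument is considerably terser: it records that the primary property ``gives'' $({(\alpha\beta)^*}^k+(c,0))\in Q$ and then splits $(\alpha^*)^k(\beta^*)^k$, but the primary condition as defined only yields membership in $Rad(Q)$ rather than $Q$, so those lines compress a further round of generalized-power unpacking into a single step. Your cascade makes this explicit: after each application of primariness you re-expand via the identities $((u,0)\alpha^l)^*=(uA^l,0)$ and $((u,0)\alpha^l)^N=(u^N,0)\alpha^{lN}$ (coming from Proposition~\ref{prop: *basic}), which eventually descends to element-level equations $w^K=0$, $u^T=0$, $A^L=0$ where idempotence of addition forces each summand to vanish. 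One could shorten your chain by peeling the whole block $(\alpha\beta)^l$ at once---dispatching the degenerate case $(\alpha\beta)^l\in Q$ immediately by a direct primariness step---rather than peeling $\beta^l$ and then $\alpha^{lN}$ on separate passes, which costs you two extra peel-and-unpack rounds; but nothing in your argument is incorrect, and the dichotomies you insert at each stage (``either $A^\bullet$, resp.\ $B^\bullet$, already vanishes and $\alpha$, resp.\ $\beta$, is nilpotent, or else primariness applies'') are exactly the right safety net. In short: same approach, carried out with the bookkeeping the paper elides.
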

\begin{proof}
Let $Q$ be a primary congruence, assume that $Rad(Q)$ is not prime. Then we have $\alpha,\beta \notin Rad(Q)$ such that $\alpha\beta \in Rad(Q)$. Then for some $k,l$ we have  $({(\alpha\beta)^*}^k+(c,0))(\alpha\beta)^l \in Q$. Now since $GP(\alpha^l) \subseteq GP(\alpha)$, neither $\alpha^l$ nor $\beta^l$ can be in $Rad(Q)$ so by the primary property we have that $({(\alpha\beta)^*}^k+(c,0))\in Q$ implying ${(\alpha\beta)^*}^k \in Q$. Since ${(\alpha\beta)^*}^k = {(\alpha^*)}^k{(\beta^*)}^k$, this means that at least one of $\alpha^*$, $\beta^*$ is nilpotent in the quotient by $Q$, but then since $GP(\alpha^*) \subseteq GP(\alpha)$ we have that $\alpha$ or $\beta$ is nilpotent, a contradiction.
\end{proof}

Unfortunately, there is no general analogue of primary decomposition from commutative algebra. It is easy to show an example of an intersection indecomposable congruence that is not primary in a semiring that satisfies the ACC.

\begin{example}{\rm
Consider the 4-element  $\mb$-algebra $A$, with set of elements $\{1,0,x,y\}$ satisfying the relations $\{1+x=1,x+y=x,x^2=x,xy=0,y^2 = 0\}$. It is easy to check that the 3 non-trivial proper congruences of this algebra are $I_1 = \{(0,y)\}$ $I_2 = \{(0,y),(0,x)\}$ $I_3 = \{(0,y),(1,x)\}$. We see that $I_1 \subseteq I_2,I_3$ so $\diag$ is intersection indecomposable. $A/I_2 \cong \mb$ and $A/I_3 \cong \mb$ so $I_2$ and $I_3$ are prime congruences. Also we have that $(1,x)(x,0) = (x,x) \in \diag$, so neither $I_1$ nor the trivial congruence are prime. It follows that $Rad(\diag) = I_2\cap I_3=I_1$ and $(1,x) \notin Rad(\diag)$ so $\diag$ is intersection indecomposable but not primary. Also note that $Rad(\diag)$ in this case is not prime so even if one changes the notion of primary congruences, as long as we require the radical of primaries to be primes this algebra would provide a counterexample to primary decomposition.
}
\end{example}

\section{Prime congruences of polynomial and Laurent polynomial semirings}\label{sec:polysemirings}
\subsection{The prime congruences of $\mb(\bx)$ and $\mb[\bx]$}\label{sec:b[x]}
Throughout this section $\mb(\bx)$ and $\mb[\bx]$ denote the Laurent polynomial semiring and the polynomial semiring with $k$ variables $\bx = (x_1,\dots,x_k)$. First we show that the kernel of the primes of these semirings are easy to describe:

\begin{proposition}\label{prop:kernels}
\begin{itemize}
\item[(i)] For any proper congruence $I$ of $\mb(\bx)$, we have that $Ker(I) = \{0\}$.
\item[(ii)] For any QC congruence $Q$ of $\mb[\bx]$ we have that $Ker(Q)$ is the polynomial semialgebra generated by a subset of the variables $x_1,\dots,x_k$.
\end{itemize}
\end{proposition}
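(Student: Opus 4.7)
My strategy is to handle both parts through a common initial reduction. Whenever $(f,0)$ belongs to a congruence and $f = \bx^{\alpha_1} + \cdots + \bx^{\alpha_n}$ is expanded as a sum of distinct (Laurent) monomials (which is possible because the coefficient semifield is $\mb$), each summand satisfies $0 \leq \bx^{\alpha_i} \leq f$ in the idempotent order, so Proposition \ref{prop: congbasic}(iii) immediately yields $(\bx^{\alpha_i},0)$ in the congruence as well. Thus in both parts I may assume that $(m,0)$ lies in the congruence for every monomial summand $m$ of $f$, reducing everything to a question about single monomials.

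For part (i), I would argue as follows: suppose $I$ is proper and $f \in Ker(I)$ with $f\neq 0$. Picking any monomial summand $\bx^\alpha$ of $f$, the reduction above gives $(\bx^\alpha,0)\in I$. Since Laurent monomials are units in $\mb(\bx)$, taking the twisted product with the pair $(\bx^{-\alpha},0)$ (via Proposition \ref{prop: congbasic}(i)) yields $(1,0)\in I$, contradicting properness. Hence $Ker(I) = \{0\}$.

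For part (ii), I would set $S := \{x_i : (x_i,0)\in Q\}$ and let $J$ denote the ideal of $\mb[\bx]$ generated by $S$, i.e.\ the polynomials each of whose monomial summands is divisible by some $x_j \in S$. The inclusion $J \subseteq Ker(Q)$ is immediate from Proposition \ref{prop: congbasic}(i) together with axiom (C4). For the reverse, by the monomial reduction it suffices to show that every monomial $m = x_{j_1}^{a_1}\cdots x_{j_r}^{a_r} \in Ker(Q)$ admits a variable factor in $S$. I would factor $(m,0) = (x_{j_1},0)(m/x_{j_1},0)$ as a twisted product and invoke the QC characterization in Proposition \ref{prop:primeprop}(iii): either $(x_{j_1},0)\in Q$, in which case $x_{j_1}\in S$ and the monomial is done, or $(m/x_{j_1},0)\in Q$, in which case I recurse on the monomial $m/x_{j_1}$, whose total degree is strictly smaller. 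The recursion must terminate either by uncovering a variable of $m$ that lies in $S$, or by bottoming out at the pair $(1,0)$; the latter would force $(1,0)\in Q$, contradicting properness. So some $x_{j_l}\in S$ divides $m$, finishing the proof.

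The only point requiring care is the peeling recursion in (ii), where one must verify that the QC property applies cleanly to factoring off a single variable at a time and that the properness of $Q$ prevents the recursion from whittling $m$ all the way down to $1$. Apart from this bookkeeping, both parts are immediate consequences of the monomial reduction via Proposition \ref{prop: congbasic}(iii).
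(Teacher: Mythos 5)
Your proof is correct and follows essentially the same route as the paper's: reduce to monomial generators of the kernel via Proposition~\ref{prop: congbasic}(iii), then use invertibility of Laurent monomials for (i) and the QC factorization property (repeatedly peeling off one variable at a time) for (ii). The only difference is that you spell out the peeling as an explicit terminating recursion and flag the role of properness, which the paper leaves implicit in the sentence ``a monomial is in $Ker(Q)$ if and only if at least one of the variables in that monomial is in $Ker(Q)$.''
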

\begin{proof}
In both cases by Proposition \ref{prop: congbasic} we have that the kernel of any congruence is generated by monomials. In the case of $\mb(\bx)$ any monomial has a multiplicative inverse, so if $Ker(I) \neq \{0\}$ then we have $(1,0) \in Ker(I)$ so $I$ has to be the improper congruence. For (ii) if $Q$ is QC then $(fg,0) \in Q$ implies that $(f,0) \in Q$ or $(g,0) \in Q$, so a monomial is in $Ker(Q)$ if and only if at least one of the variables in that monomial is in $Ker(Q)$. 
\end{proof}

So in fact prime congruences of $\mb[\bx]$ with non-zero kernels will correspond to prime congruences of a polynomial semirings in less variables. Next recall that quotients by primes are totally ordered and consider the following proposition:

\begin{proposition}\label{prop: orderingbasic}
\begin{itemize}
\item[(i)] If $Q$ is a congruence of  $\mb[\bx]$  or $\mb(\bx)$ such that the quotient by $Q$ is totally ordered, then in each equivalence class of $Q$ there is at least one monomial. 
\item[(ii)] A congruence $P$ of $\mb(\bx)$ is prime if and only if $\mb(\bx)/P$ is totally ordered.
\item[(iii)] If $Q$ is a prime congruence of $\mb[\bx]$ with $Ker(Q) = \{0\}$, then $Q = P|_{\mb[\bx]}=P$ for some prime congruence $P$ of $\mb(\bx)$.
\item[(iv)] For a prime $P$ of $\mb(\bx)$ the multiplicative monoid of $\mb(\bx)/P$ is isomorphic to a quotient of the additive group $(\mz^k,+)$. For a prime $P$ of $\mb[\bx]$ the multiplicative monoid of $\mb[\bx]/P$ is isomorphic to the restriction of a quotient of the additive group $(\mz^{k'},+)$ to $(\mn^{k'},+)$, where $k-k'=|\{x_1,\dots,x_k\}\cap Ker(P)|$.
\end{itemize}
\end{proposition}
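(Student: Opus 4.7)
The plan is to treat the four parts sequentially, with (i) serving as the key technical backbone.

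For (i), I would expand a representative as a sum of monomials $f = m_1 + \cdots + m_n$. In the totally ordered quotient, the monomial images have a maximum $m_j$, so idempotence of addition gives $m_i + m_j \equiv m_j$ for every $i$ and hence $f \equiv m_j$ in the quotient; the argument is identical in $\mb[\bx]$ and $\mb(\bx)$. For (ii), the forward direction is Proposition~2.9(i) applied to the domain $\mb(\bx)/P$. Conversely, by Proposition~2.9(ii) it suffices to show cancellativity: given $ab \equiv ac \pmod P$ with $a \not\equiv 0$ in the quotient, I would use (i) to replace $a,b,c$ by monomial representatives and then multiply by $a^{-1}$, which is available in $\mb(\bx)$, to obtain $b \equiv c \pmod P$.

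Part (iii) is the main technical step. I would define $P \subseteq \mb(\bx) \times \mb(\bx)$ by declaring $(f,g) \in P$ iff there exists a Laurent monomial $\mu$ with $\mu f, \mu g \in \mb[\bx]$ and $(\mu f, \mu g) \in Q$. The delicate point is independence from the choice of $\mu$: if $\mu_1$ and $\mu_2$ both clear denominators, multiplying by $\mu_2$ lands $(\mu_1\mu_2 f, \mu_1\mu_2 g)$ in $Q$, and then the QC property of $Q$ together with $(\mu_1,0) \notin Q$ (which follows from $Ker(Q)=\{0\}$ and $\mu_1$ being a monomial) allows one to cancel $\mu_1$, yielding $(\mu_2 f, \mu_2 g) \in Q$. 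The congruence axioms for $P$ and the identity $P|_{\mb[\bx]} = Q$ are then routine. Primeness is deduced from (ii): every element of $\mb(\bx)/P$ is represented by a Laurent monomial (clear denominators into $\mb[\bx]$ and apply (i) to $Q$), so $\mb(\bx)/P$ inherits the total ordering from $\mb[\bx]/Q$.

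For (iv), in $\mb(\bx)$ the nonzero monomials form a multiplicative group isomorphic to $(\mz^k,+)$, and since $P$ is proper and monomials are units, no nonzero monomial can be equivalent to $0$; the relation $P$ restricted to monomials is therefore a group congruence, producing a quotient of $\mz^k$. By (i) every nonzero class of $\mb(\bx)/P$ has a monomial representative, so the nonzero multiplicative monoid equals this quotient group. For $\mb[\bx]/P$, Proposition~4.1(ii) describes $Ker(P)$ as the ideal generated by a subset of the variables; letting $k'$ be the number of variables outside $Ker(P)$, part (i) forces every nonzero class to be represented by a monomial in those $k'$ variables, which form $\mn^{k'}$. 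Cancellativity of $\mb[\bx]/P$ makes the induced congruence on $\mn^{k'}$ cancellative, so it extends to a group congruence on the Grothendieck group $\mz^{k'}$, identifying the nonzero multiplicative monoid with the image of $\mn^{k'}$ in this quotient. The primary obstacle in the whole proposition is the well-definedness argument in (iii): once that is secured, the remaining verifications are straightforward consequences of part (i) and the structure theorems of Section~2.
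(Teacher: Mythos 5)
Your proposal is correct and follows essentially the same route as the paper: (i) by maximal-monomial representatives via idempotence, (ii) by noting the quotient is a semifield (hence cancellative) and invoking Proposition~\ref{prop:primeorder}, (iii) by transporting $Q$ to $\mb(\bx)$ with the QC property securing well-definedness, and (iv) by combining (i), (iii) and Proposition~\ref{prop:kernels}. The only real stylistic difference is that you build $P$ in (iii) explicitly as a set of pairs $\{(f,g)\,:\,\exists\,\mu\text{ with }(\mu f,\mu g)\in Q\}$, whereas the paper specifies $P$ only on pairs of monomials and lets the totally ordered quotient determine the rest; your version is a bit more constructive at the cost of checking the congruence axioms directly.

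One small imprecision in your well-definedness step: if $\mu_1,\mu_2$ are Laurent monomials clearing denominators of $(f,g)$, the product $\mu_1\mu_2 f$ need not lie in $\mb[\bx]$, so ``multiplying by $\mu_2$'' does not immediately make sense inside $\mb[\bx]\times\mb[\bx]$. The fix is routine: either note that witnesses may always be enlarged to genuine monomials of $\mb[\bx]$ (if $\mu f,\mu g\in\mb[\bx]$ and $m\in\mb[\bx]$ is any monomial then $m\mu f,m\mu g\in\mb[\bx]$, and by QC with $Ker(Q)=\{0\}$ the condition $(m\mu f,m\mu g)\in Q$ is equivalent to $(\mu f,\mu g)\in Q$), or choose monomials $m_1,m_2\in\mb[\bx]$ with $m_2\mu_1=m_1\mu_2$, multiply by $m_2$, and cancel $m_1$ by QC. With that patch the argument is complete.
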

\begin{proof}
The first statement follows from the fact that if the quotient is totally ordered, then every polynomial is congruent to any of its monomials that is maximal with respect to the ordering on the quotient. For (ii) consider that every monomial in  $\mb(\bx)$ has a multiplicative inverse, so by (i) we see that the if the quotient by a congruence $P$ is totally ordered then it is a semifield, which is in particular cancellative and then by Proposition \ref{prop:primeorder} $P$ is prime. For (iii) first note that congruences of $\mb(\bx)$ with totally ordered quotients are determined by the equivalence class of $1$. Take a prime congruence $Q$ of $\mb[\bx]$ with $Ker(Q) = \{0\}$, and let $P$ be the congruence of  $\mb(\bx)$ with a totally ordered quotient satisfying that for any monomials $m_1,m_2 \in \mb[\bx]$: $$(1,m_1/m_2) \in P \iff (m_2,m_1) \in Q\;\mbox{and}\;(1,m_1/m_2 + 1) \in P \iff (m_2,m_1 + m_2) \in Q.$$ Note that while writing a Laurent monomial as quotient of monomials of $\mb[\bx]$ is not done uniquely, the above is still well defined because of the QC property of $Q$. $P$ is prime since its quotient is totally ordered and cancellative and it is straightforward to check that $P|_{\mb[\bx]} = Q$. (iv) follows from (i),(iii) and Proposition \ref{prop:kernels}.
\end{proof}

A {\it group ordering} (resp. {\it semigroup ordering}) of a group (resp. semigroup) $(G,+)$, is an ordering $\leq$ on the elements of $G$ satisfying that for any $g_1,g_2 \in G$ with $g_1 \leq g_2$ and an arbitrary $g_3 \in G$ we have $g_1+g_3 \leq g_2+g_3$. The previous proposition tells us that to understand the prime quotients of $\mb(\bx)$ we need to describe the group orderings on the quotients of $(\mz^k,+)$. When we think of $(\mz^k,+)$ (resp. $(\mn^k,+)$) as the group (resp. semigroup) of Laurent monomials (resp. monomials) with the usual multiplication their group orderings are called {\it term orderings}. (Note that in the literature it is sometimes required that the generating variables are larger than the unit under a term ordering, but we do not use this convention). Term orderings are described by a result of Robbiano in \cite{Rob85}:

\begin{proposition}\label{prop: Robbiano}
For every term ordering $\leq$ of the Laurent monomials $\{\bx^\bn\mid \bn \in \mz^k\}$ there exist a matrix $U$  with $k$ columns and $l\leq k$ rows, such that $\bx^{\bn_1} <  \bx^{\bn_2}$ if and only if the first non-zero coordinate of $U(\bn_2-\bn_1)$  is positive. Term orderings of the monomials $\{\bx^\bn\mid \bn \in \mn^k\}$ are restrictions of the orderings on the Laurent monomials.
\end{proposition}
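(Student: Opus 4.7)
The plan is to translate the combinatorial ordering on monomials into a convex-geometric statement about positive cones, then peel off linear functionals one dimension at a time. A term order on $\{\bx^{\bn}\mid \bn\in\mz^k\}$ is the same thing as a total group order on $(\mz^k,+)$, and such an order is determined by its positive cone $P=\{\bn\in\mz^k\mid \bx^{\bn}>1\}$, which is a subsemigroup with $\mz^k=P\sqcup\{0\}\sqcup(-P)$. My plan is to exhibit a matrix $U$ whose rows are successive linear functionals making the lexicographic rule on $U\bn$ agree with $\leq$.

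First I would extend the order to $\mq^k$ by declaring $\bn\in\mq^k$ positive iff some (equivalently every) positive integer multiple of it lies in $P$; the semigroup property guarantees this is well defined and compatible. Next I would take the closure $\overline{P_{\mq}}\subseteq \mr^k$ in the Euclidean topology. The crucial geometric observation is that $\overline{P_{\mq}}$ is a closed convex cone whose union with $-\overline{P_{\mq}}$ covers $\mr^k$; by a standard convexity argument this forces $\overline{P_{\mq}}$ to be a closed half-space bounded by some hyperplane $H_1=\ker u_1$, where $u_1:\mr^k\to\mr$ is a linear functional chosen to be non-negative on $P_{\mq}$. Thus for any $\bn\in\mz^k$ with $u_1(\bn)\neq 0$, the sign of $u_1(\bn)$ determines the sign of $\bn$ under the order.

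To deal with the "ties" on $H_1$, I would restrict the order to the subgroup $\mz^k\cap H_1$, which has rank strictly less than $k$, and iterate the same construction, obtaining $u_2$ on $H_1$, then $u_3$ on $H_1\cap H_2$, and so on. Extending each $u_i$ arbitrarily to $\mr^k$ (its values outside the relevant subspace are irrelevant to the comparison at the $i$-th stage), the process terminates after at most $k$ steps with functionals $u_1,\dots,u_l$ whose common kernel meets $\mz^k$ trivially. Stacking them as rows gives the matrix $U$, and by construction $\bx^{\bn_1}<\bx^{\bn_2}$ iff the first nonzero entry of $U(\bn_2-\bn_1)$ is positive. For the polynomial case, any term order on $\{\bx^{\bn}\mid \bn\in\mn^k\}$ extends uniquely to the Grothendieck group $\mz^k$ by declaring $\bn_1-\bn_2>0$ iff $\bx^{\bn_1}>\bx^{\bn_2}$; the semigroup-order axiom guarantees this extension is well defined and is a group order, so the Laurent result restricts to give the polynomial statement.

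The main obstacle is the half-space step: one must verify carefully that the topological closure of $P_{\mq}$ genuinely has $\mr^k\setminus\overline{P_{\mq}}\subseteq -\overline{P_{\mq}}$ with equality on a linear subspace, so that it really is bounded by a hyperplane rather than some more complicated convex set. The cleanest way is to show directly that $\overline{P_{\mq}}\cap(-\overline{P_{\mq}})$ is a linear subspace (using that $P_{\mq}$ is closed under positive rational scaling and addition) and that its complement in $\overline{P_{\mq}}$ is convex and cone-like; then separation in finite-dimensional convex geometry produces $u_1$. Once this core step is in hand, the inductive descent is routine.
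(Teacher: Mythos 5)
The paper states this proposition without proof, citing Robbiano \cite{Rob85}, so there is no ``in-paper'' argument to compare against directly; however, the paper does re-run essentially this argument (adapted to $\mr\oplus\mz^k$) in Lemma \ref{lem:RobbforT}, and your outline is exactly Robbiano's: extend the group order from $\mz^k$ to $\mq^k$, take the closure of the positive cone in $\mr^k$, identify it as a closed half-space $\{u_1\ge 0\}$, and recurse on the sublattice in $\ker u_1$. The one place your sketch is genuinely thin is the ``half-space step'' you yourself flag. The lineality-space reduction you gesture at requires first knowing that $\overline{P_\mq}$ is a \emph{proper} subset of $\mr^k$, and your description (``show that the complement in $\overline{P_\mq}$ is convex and cone-like, then separate'') does not actually address this. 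A clean way to close the gap: if $\overline{P_\mq}=\mr^k$ then the real convex cone generated by $P_\mq$ is all of $\mr^k$, hence $0$ is a nontrivial nonnegative real combination of finitely many vectors in $P_\mq$; since those vectors are rational, the coefficient set is a rational polyhedral cone with nonempty relative interior, so a positive \emph{rational} solution exists, which after clearing denominators puts $0\in P_\mq$, a contradiction. Once $\overline{P_\mq}\subsetneq\mr^k$, the fact that $\overline{P_\mq}\cup(-\overline{P_\mq})=\mr^k$ forces the lineality space $L$ to be a hyperplane and $\overline{P_\mq}/L$ to be a ray, giving the half-space. With that supplied, the rest of your argument --- the descent, the bound $l\le k$, and the Grothendieck-group extension from $\mn^k$ to $\mz^k$ (which uses order-cancellativity, a consequence of totality) --- is correct and matches the intended proof.
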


We will say that the $i$-th row of the matrix $U$ is {\it non-redundant} if there is an integer vector $\bn \in \mz^k$ such that the first non-zero coordinate of $U\bn$ is the $i$-th coordinate. If all of the rows of $U$ are non-redundant we will call it an {\it admissible} matrix. If $U$ is an admissible matrix for an ordering as in the setting of Proposition \ref{prop: Robbiano}, then it will be called a defining matrix of the ordering. It is easy to verify that the defining matrix can always be chosen to have orthonormal rows, and that for an ordering defined by a square matrix there is a unique orthogonal defining matrix.As explained above, term orderings define prime congruences of $\mb(\bx)$ and $\mb[\bx]$, which will be denoted by $P(U)$ and $P[U]$ respectively. One can also consider the $\mb$-algebra of Laurent monomials (resp. monomials) whose addition is defined by the term ordering of $U$, and the surjections from $\mb(\bx)$ (resp. $\mb[\bx]$) onto these that map each polynomial to their leading monomial, then $P(U)$ (resp. $P[U]$) are just the kernel of these maps. Note that prime congruences given by term orderings are minimal by (i) of Proposition \ref{prop: orderingbasic} since every equivalence class of them contains precisely one monomial. \par\smallskip
If an admissible matrix $U$ is the defining matrix of a term ordering then the zero vector is the only integer vector in the kernel of $U$, since a term ordering is a total ordering of all of the monomials. If $U$ has integer vectors in its kernel, it still gives us a group ordering on the quotient $\mz^k/(Ker(U)\cap \mz^k)$, defined the same way as in Proposition \ref{prop: Robbiano}. In this case we will still call $U$ the defining matrix of the ordering on that quotient and denote by $P(U)$ or $P[U]$ the corresponding prime congruences of $\mb(\bx)$ and $\mb[\bx]$. Explicitly speaking, $P(U)$ is generated by the pairs $(\bx^{\bn_1}+\bx^{\bn_2},\bx^{\bn_2})$ such that either $U(\bn_2-\bn_1)=\b0$ or the first non-zero coordinate of $U(\bn_1-\bn_2)$ is positive and $P[U] = P(U)|_{\mb[\bx]}$. We will soon see that every prime congruence of these $\mb$-algebras arise this way. \par\smallskip
Since the rows of an admissible matrix $U$ are linearly independent its rank $r(U)$ is equal to the number of its rows. For $i\leq r=r(U)$ let us denote by $U(i)$ the matrix that consists of the first $i$ rows of $U$. Note that if $U$ is admissible then so are all of the $U(i)$. Let us use the convention that $U(0)$ for any $U$ is the "empty matrix" which corresponds to the only group ordering of the one element quotient $\mz^k/\mz^k$ and $P(U(0))$ (resp. $P[U(0)]$) are the maximal congruences of $\mb(\bx)$ (resp. $\mb[\bx]$) that identify every non-zero element with $1$. Accordingly we will write $r(U(0)) = 0$.
Now we describe the primes lying above a congruence $P(U)$.

\begin{proposition}\label{prop: primes of term orderings}
Let $U$ be an admissible matrix with $k$ columns. Then every proper congruence of $\mb(\bx)$ containing $P(U)$ is an element of the strictly increasing chain $$P(U)=P(U(r(U)))\subset P(U(r(U)-1)) \subset \dots\subset P((U(0))).$$ In particular every proper congruence of $\mb(\bx)/P(U)$ is prime and $dim(\mb(\bx)/P(U)) = r(U)$.
\end{proposition}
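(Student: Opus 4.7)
The plan is to reduce to analyzing congruences on the quotient semiring $B := \mb(\bx)/P(U)$, which has a very transparent structure. Since $P(U)$ is generated by identifying $\bx^{\bn_1}+\bx^{\bn_2}$ with $\bx^{\bn_2}$ whenever $\bx^{\bn_1}\geq \bx^{\bn_2}$ in the $U$-order, and this order totally orders $\mz^k/(\ker U\cap \mz^k)$, every element of $B$ reduces to either $0$ or a single Laurent monomial. Thus the multiplicative monoid of $B$ is $G\cup\{0\}$ for $G = \mz^k/(\ker U\cap\mz^k)$, with addition given by the lex order coming from $U$; in particular $B$ is totally ordered, as is any further quotient $B/\bar I$.

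Given a proper congruence $I\supseteq P(U)$, Proposition 4.2(i) guarantees each equivalence class contains a monomial, and since every element of $B$ is already either $0$ or a monomial, $\bar I := I/P(U)$ is completely determined by the set $H := \{\bn\in\mz^k : (\bx^\bn,1)\in I\}$ (properness rules out any non-zero element being identified with $0$, since non-zero elements of $B$ are invertible). Clearly $H$ is a subgroup of $\mz^k$ containing $\ker U\cap\mz^k$, and by Proposition 2.4(iii) it is order-convex: if $\bn\in H$ and $\bx^{\bn'}$ lies between $1$ and $\bx^\bn$ in the $U$-order, then $\bn'\in H$. So the classification of proper congruences $I\supseteq P(U)$ reduces to classifying the order-convex subgroups of $\mz^k$ that contain $\ker U\cap\mz^k$.

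I expect this classification step to be the main content: I would show that the only such convex subgroups are $H_i := \{\bn\in\mz^k : (U\bn)_j=0\text{ for all }j\leq i\}$ for $0\leq i\leq r(U)$. Convexity of each $H_i$ is a direct lex check. For the converse, given a convex $H\supseteq\ker U\cap \mz^k$, let $i$ be the largest index such that $(U\bn)_j = 0$ for every $\bn\in H$ and every $j\leq i$; by the choice of $i$ there is some $\bn_0\in H$ with $(U\bn_0)_{i+1}\neq 0$, and then for any $\bn\in H_i$ one can take $N$ large enough that both $\bx^{N\bn_0}$ and $\bx^{-N\bn_0}$ sandwich $\bx^\bn$ in the $U$-order, forcing $\bn\in H$ by convexity. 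The subgroup $H_i$ precisely corresponds to the congruence $P(U(i))$; each $P(U(i))$ is prime by Proposition 4.2(ii); and the admissibility of $U$ (the non-redundancy of each row) furnishes the witness $\bn_0$ needed to show $H_{i-1}\supsetneq H_i$, making the chain strict of length $r(U)$ and giving the stated dimension. The one step requiring genuine care is the sandwiching argument in the convex-subgroup classification; everything else amounts to unwinding the relevant definitions.
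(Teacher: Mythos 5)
Your proposal is correct and takes essentially the same route as the paper: both arguments turn on the sandwich property of Proposition~\ref{prop: congbasic}(iii) applied to large integer multiples of a witness exponent, using non-redundancy of the rows of $U$ for strictness of the chain. Your reformulation, classifying the order-convex subgroups of $\mz^k/(Ker(U)\cap\mz^k)$ rather than reducing to singly-generated congruences as the paper does, is an equivalent and slightly cleaner packaging of the same computation (just note the degenerate case $H=Ker(U)\cap\mz^k$, where no witness $\bn_0$ exists and one has $H=H_{r(U)}$ directly).
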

\begin{proof}
The congruences $P(U(i))$ are prime since their quotients are totally ordered and cancellative. Furthermore, the chain in the proposition is strictly increasing since the rows of $U$ are non-redundant. Since the $P(U(i))$-s form a finite chain, it is enough to verify that every congruence that is generated by a single pair is one of these, and then it will follow for an arbitrary congruence $P(U)\subseteq I$ that $I=P(U(i))$ where $i$ is the smallest such that $P(U(i))$ can be generated by a pair in $I$. Note that in a semifield each congruence is determined by the equivalence class of $1$, since for any congruence $I$ we have that $(\alpha_1,\alpha_2) \in I \iff (\alpha_1\alpha_2^{-1},1)\in I$. Therefore for any congruence $P(U) \subsetneq I$ generated by a single pair we have that $I = \<(1,\bx^\bn)\>$ for some $\bn \in \mz^k$ satisfying $\bn \notin Ker(U)$. Let $s$ be the smallest integer such that for the $s$-entry of $U\bn$ we have $(U\bn)[s] \neq 0$, then we have that $(1,\bx^\bn) \in  P(U(s-1))$. Moreover, if $(1,\bx^{\bn'}) \in  P(U(s-1))$ for some $\bn'$, then $\forall j<s:\; (U\bn')[j] = 0$. Then for some $k \in \mz$ with large enough absolute value we have that either $1 \leq \bx^{\bn'} \leq \bx^{k\bn}$ or $ \bx^{k\bn} \leq \bx^{\bn'} \leq 1$ where $\leq$ is the ordering on the quotient $\mb(\bx)/P(U)$. Then by (iii) of  Proposition \ref{prop: congbasic} we have that $(1, \bx^{\bn'}) \in I$, so $P(U(s-1)) \subseteq I$ and then $P(U(s-1))  = I$.
\end{proof}

Finally, we need the following lemma to prove our main result:

\begin{lemma}\label{lem: minprimes}
For every prime congruence $Q$ of $\mb(\bx)$ we have an admissible matrix $U$ such that $P(U) \subseteq Q$ and $Ker(U)\cap\mz^k = \{\b0\}$.
\end{lemma}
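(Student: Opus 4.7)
The plan is to extract from $Q$ a group ordering on a quotient of $\mz^k$, then refine it to a total group ordering on $\mz^k$, and finally apply Robbiano's theorem to get the matrix $U$.

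First I would set $H = \{\bn \in \mz^k \mid (\bx^\bn, 1) \in Q\}$. By part (iv) of Proposition \ref{prop: orderingbasic}, the multiplicative monoid of $\mb(\bx)/Q$ is $\mz^k/H$, and by part (ii) this quotient carries the total ordering induced by the idempotent addition in $\mb(\bx)/Q$. Concretely, for the images $\bar\bn, \bar\bn' \in \mz^k/H$, this ordering is $\bar\bn \leq_Q \bar\bn'$ iff $\bx^\bn + \bx^{\bn'} \equiv \bx^{\bn'} \pmod Q$. One checks easily that $\leq_Q$ is a total group ordering on $\mz^k/H$.

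Next I would lift $\leq_Q$ to a total group ordering $\leq$ on all of $\mz^k$ by breaking ties inside each $H$-coset. Since $H$ is a subgroup of a finitely generated free abelian group it is itself free abelian, so it admits some total group ordering $\prec$ (e.g.\ any lex ordering with respect to a basis). Define $\bn \leq \bn'$ to mean: either $\bar\bn <_Q \bar{\bn'}$ in $\mz^k/H$, or $\bn' - \bn \in H$ and $\bn' - \bn \succeq 0$. A routine case-analysis shows that $\leq$ is a total group ordering on $\mz^k$; by construction it refines $\leq_Q$ in the sense that $\bar\bn <_Q \bar{\bn'}$ forces $\bn < \bn'$.

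Then by Proposition \ref{prop: Robbiano} the ordering $\leq$ is defined by some matrix $U$ with $k$ columns, which I may assume admissible by dropping redundant rows. Since $\leq$ is a total ordering on $\mz^k$ itself, no nonzero integer vector lies in $Ker(U)$, giving $Ker(U)\cap \mz^k = \{\b0\}$. To see $P(U) \subseteq Q$, take any generator $(\bx^{\bn_1} + \bx^{\bn_2}, \bx^{\bn_2})$ of $P(U)$ with $\bn_1 \leq \bn_2$ in the $U$-ordering. If $\bar{\bn_1} <_Q \bar{\bn_2}$, then $\bx^{\bn_1}+\bx^{\bn_2}\equiv \bx^{\bn_2}\pmod Q$ directly from the ordering on $\mb(\bx)/Q$. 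Otherwise $\bn_2 - \bn_1 \in H$, so $\bx^{\bn_1}\equiv \bx^{\bn_2}\pmod Q$ and the same congruence holds after adding $\bx^{\bn_2}$ to both sides by idempotency. Either way the generator lies in $Q$.

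The main obstacle is the lifting step: one must verify that the lexicographic combination of $\leq_Q$ with $\prec$ is genuinely a total group ordering on $\mz^k$ (so that Robbiano applies) and that it refines $\leq_Q$ in the sense needed for the inclusion $P(U) \subseteq Q$. Everything else is then either a direct appeal to Proposition \ref{prop: Robbiano} or a one-line check using the description of $\leq_Q$ in terms of the idempotent addition.
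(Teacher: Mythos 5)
Your proof is correct and uses the same core idea as the paper: refine the (possibly non-total) ordering $\leq_Q$ induced on monomials by $Q$ to a total group ordering on $\mz^k$ via a lexicographic combination, then invoke Proposition~\ref{prop: Robbiano}. The only cosmetic difference is that you break ties using a group ordering on the subgroup $H$ that $Q$ collapses, whereas the paper breaks ties with an arbitrary term ordering $\preceq_0$ on all of $\mz^k$; these yield the same construction.
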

\begin{proof}
Recall that for an admissible matrix $U$ the condition $Ker(U)\cap\mz^k = \{\b0\}$ is equivalent to saying that $U$ is the defining matrix of a term ordering. Intuitively speaking $U$ can be obtained by taking an arbitrary ordering on the subspace that $Q$ identifies with $1$. To see this, denote the ordering induced by the addition on $\mb(\bx)/Q$ by $\leq_Q$ and fix an arbitrary term ordering $\preceq_0$. Now we define a new term ordering $\preceq$ as $$m_1 \preceq m_2 \iff m_1 <_Q m_2 \;or\;[(m_1,m_2)\in Q\;and\;m_1\preceq_0 m_2].$$
To verify that $\preceq$ is indeed a term ordering consider $m_1,m_2$ such that $m_1 \preceq m_2$ and an arbitrary monomial $s \neq 0$. We have that either $m_1 <_Q m_2$, but then by the cancellativity of  $\mb(\bx)/Q$ it follows that $sm_1 <_Q sm_2$, or $(m_1,m_2) \in Q$ and $m_1 \preceq_0 m_2$ and then since $Q$ is a congruence and $\preceq_0$ is a term ordering we have that  $(sm_1,sm_2) \in Q$ and $sm_1 \preceq_0 sm_2$. Now from the definition of $\preceq$ we see that $m_1 \preceq m_2 \Rightarrow m_1 \leq_Q m_2$, so for the defining matrix $U$ of $\preceq$ we have $P(U) \subseteq Q$.
\end{proof}

A {\it lattice polytope} in $\mr^k$ is just a polytope whose vertices are all in $\mz^k$. The {\it Newton polytope} of a polynomial $f = \sum_i \bx^{\bn_i}$ of $\mb(\bx)$ or $\mb[\bx]$ is the convex hull of the lattice points $\bn_i \in \mz^k$. It will be denoted by $newt(f)$. By convention $newt(0)$ is the empty set.
Now we proceed to describe the prime congruences and radical of $\mb(\bx)$. We remind that by convention we also write the maximal congruence of $\mb(\bx)$ as $P(U)$ where $U$ is a matrix with "zero rows".

\begin{theorem}\label{thm:primesofb(x)} For the $k$-variable Laurent polynomial semialgebra $\mb(\bx)$ we have that:
\begin{itemize}
\item[(i)] The set of prime congruences of  $\mb(\bx)$ is $\{P(U)\mid\:U\; is \; an\; admissible \; matrix \; with \; k\; columns\}$. The prime congruence $P(U)$ is minimal if and only if $Ker(U)\cap\mz^k = \{\b0\}$.
\item[(ii)] $dim(\mb(\bx)) = k$. 
\item[(iii)] The pair $(f,g)$ lies in the radical of the trivial congruence of $\mb(\bx)$ if and only if $newt(f) = newt(g)$.
\item[(iv)] The $\mb$-algebra $\mb(\bx)/Rad(\diag)$ is isomorphic to the  $\mb$-algebra with elements the lattice polytopes and addition being defined as the convex hull of the union, and multiplication as the Minkowski sum.
\item[(v)] Every radical congruence is QC.
\end{itemize}
\end{theorem}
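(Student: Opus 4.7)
The plan is to prove the five parts in order, with (i) doing the structural work and the others following either directly or with a small extra argument. For (i), I would combine Lemma \ref{lem: minprimes} and Proposition \ref{prop: primes of term orderings}: given any prime $Q$, the former yields an admissible $U$ with $Ker(U)\cap\mz^k=\{\b0\}$ and $P(U)\subseteq Q$, and then the latter forces $Q=P(U(i))$ for some $0\le i\le r(U)$. For the minimality criterion: if $Ker(U)\cap\mz^k=\{\b0\}$ then $U$ defines a term ordering and every class modulo $P(U)$ contains a single monomial (Proposition \ref{prop: orderingbasic}(i)), so no prime sits strictly below $P(U)$; conversely if some $\b0\ne\bn_0\in Ker(U)\cap\mz^k$, appending a row $u$ with $u\cdot\bn_0\ne 0$ produces an admissible $U'$ whose new row is non-redundant (witnessed by $\bn_0$), and $P(U')\subsetneq P(U)$ because $(1,\bx^{\bn_0})\in P(U)\setminus P(U')$. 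Part (ii) is then immediate: by (i), any chain of primes ending at some $P(U)$ has length at most $r(U)\le k$, and this bound is attained by taking $U$ to be the $k\times k$ identity matrix and using the chain $P(U)\subset P(U(k-1))\subset\dots\subset P(U(0))$.

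For (iii) and (iv), combine (i) with Theorem \ref{thm:radical}. Since intersection over all primes coincides with intersection over minimal primes, $Rad(\diag)=\bigcap_U P(U)$ with $U$ running over defining matrices of term orderings. In each quotient $\mb(\bx)/P(U)$, a polynomial $f$ is congruent to its unique $U$-leading monomial, which corresponds to the $U$-maximal vertex of $newt(f)$ (Proposition \ref{prop: orderingbasic}(i)). Hence $(f,g)\in P(U)$ iff $newt(f)$ and $newt(g)$ share their $U$-maximum, and two lattice polytopes coincide iff they have the same maximum vertex under every term ordering: every vertex of a lattice polytope is singled out as the unique maximum by some admissible $U$, obtained from a supporting hyperplane at that vertex plus a generic perturbation that breaks all remaining ties while keeping rows non-redundant. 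This proves (iii). For (iv), the map $f\mapsto newt(f)$ then descends to a bijection between $\mb(\bx)/Rad(\diag)$ and the set of lattice polytopes; the identities $newt(f+g)=\mathrm{conv}(newt(f)\cup newt(g))$ (immediate from idempotent addition, so that the support of $f+g$ is the union of supports) and $newt(fg)=newt(f)+newt(g)$ (the classical Minkowski-sum identity) show this bijection is a $\mb$-algebra isomorphism.

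For (v), the decisive input is Proposition \ref{prop:kernels}(i): every proper congruence of $\mb(\bx)$ has trivial kernel. Let $I$ be a proper radical congruence and suppose $(a,0)\alpha\in I$; I verify the QC criterion from Proposition \ref{prop:primeprop}(iii). If $a=0$ then $(a,0)\in\diag\subseteq I$. Otherwise $a\ne 0$, so for every prime $P\supseteq I$ we have $(a,0)\notin P$, and since $(a,0)\alpha\in P$ primality forces $\alpha\in P$; intersecting over all such $P$ gives $\alpha\in Rad(I)=I$. The main obstacle across the whole argument is the polyhedral step in (iii): converting a supporting hyperplane at a chosen vertex of $newt(f)$ into an admissible integer matrix $U$ that realises that vertex as a strict $U$-maximum on all of $newt(f)\cap\mz^k$ requires a careful rational approximation and tie-breaking to keep the matrix admissible and to ensure no other lattice point in $newt(f)\cup newt(g)$ ties with it.
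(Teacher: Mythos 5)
Your proposal is correct and tracks the paper's proof closely: (i) from Lemma \ref{lem: minprimes} plus Proposition \ref{prop: primes of term orderings}, (ii) from the length of the chain $P(U(i))$, (iii)--(iv) from the correspondence between term orderings, leading monomials, and vertices of Newton polytopes, and (v) from Proposition \ref{prop:kernels}(i) giving trivial kernels. One small remark: the ``careful rational approximation'' you flag as the main obstacle in (iii) is not actually needed, since Robbiano's classification (Proposition \ref{prop: Robbiano}) permits real entries in the defining matrix, so a separating hyperplane's normal vector can be taken directly as the first row of an admissible $U$; your explicit construction of a strictly smaller prime for the non-minimality direction in (i) is a useful elaboration of a step the paper leaves implicit.
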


\begin{proof}
For (i) consider that by Lemma \ref{lem: minprimes} every prime contains a prime $P(U)$ with $Ker(U)\cap\mz^k = \{\b0\}$ and by Proposition \ref{prop: primes of term orderings} every prime lying over some $P(U)$ is $P(U(i))$ for some $0\leq i \leq r(U)$.  (ii) follows from Proposition \ref{prop: primes of term orderings} and the fact that there are term orderings whose defining series is of length $k$ (for example the usual lexicographic order). For (iii) first note that since every prime is contained in a minimal prime the radical of the trivial congruence is the intersection of the minimal primes. By (i) a minimal prime $P(U)$ corresponds to a term ordering, and for a monomial $m$ and a polynomial $f$ we have $(f,m) \in P(U)$ if and only if $m$ is the leading term of $f$ in the corresponding term ordering. Hence it is enough to show that the set of vertices of $newt(f)$ are precisely the exponents of the monomials of $f$ that are leading terms with respect to some term ordering. On one hand by Proposition \ref{prop: Robbiano} the leading term is determined by maximizing a set of linear functionals on $newt(f)$, so its exponent indeed has to be one of the vertices. On the other hand for any vertex $v$ of $newt(f)$ one can pick a hyperplane that separates it from the rest of the vertices. Choosing the normal vector $\bu$ of such a hyperplane to point towards the side of $v$, for any admissible matrix $U$ with $Ker(U)\cap\mz^k = \{\b0\}$ having $\bu$ as a first row we have that the leading term of $f$ in the term ordering defined by $U$ is the monomial with exponent $v$. Now since the set of vertices determine the polytope $newt(f)$ we have that $(f,g)$ lies in every prime if and only if $newt(f) = newt(g)$. For (iv) one easily checks that $newt(f+g)$ is the convex hull of $newt(f) \cup newt(g)$ and $newt(fg)$ is the Minkowski sum of $newt(f)$ and $newt(g)$. For (v) assume that for a radical congruence $I$,  $(g,0)(f_1,f_2) \in I$ then $(g,0)(f_1,f_2)$ is in every prime containing $I$, but since all primes have trivial kernels $(f_1,f_2)$ has to be in every prime containing $I$ and then $(f_1,f_2) \in I$.
\end{proof}

In the one variable case there are two different term orderings, and for two or more variables there are infinitely many, hence by Proposition \ref{prop: finiteminprimes} we have the following corollary:

\begin{corollary}\label{cor: infinitechains}
If $k>1$ there are infinitely many minimal prime congruences in $\mb(\bx)$ and if $k = 1$ there are exactly two. In particular for $k>1$ $\mb(\bx)$ does not satisfy the ACC for radical congruences (or equivalently for QC congruences).
\end{corollary}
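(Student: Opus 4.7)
The plan is as follows. By Theorem \ref{thm:primesofb(x)}(i), the minimal prime congruences of $\mb(\bx)$ are exactly those of the form $P(U)$ where $U$ is an admissible $k$-column matrix satisfying $Ker(U)\cap\mz^k=\{\b0\}$; equivalently, they are in bijection with term orderings on the Laurent monomial lattice $\mz^k$. So the counting problem reduces to exhibiting term orderings.

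For $k=1$, an admissible $1\times 1$ matrix with trivial integer kernel is just a single nonzero real number, and the induced ordering on $\mz$ depends only on its sign; hence there are exactly two minimal primes, the one that identifies each polynomial with its highest-degree monomial and the one that identifies it with its lowest-degree monomial. For $k>1$ I would exhibit an infinite family of pairwise distinct term orderings. For each positive integer $n$, let $U_n$ be the $k\times k$ matrix whose top row is $(n,1,0,\dots,0)$ and whose remaining $k-1$ rows form the lower-right $(k-1)\times (k-1)$ identity block padded by zeros in the first column. One checks that each $U_n$ is admissible (row $2$ is exposed by $\bn=(1,-n,0,\dots,0)$, and rows $i\geq 3$ by standard basis vectors $\bn = \bE_i$), and $\det(U_n)=n\neq 0$ gives $Ker(U_n)\cap\mz^k=\{\b0\}$, so $U_n$ defines a term ordering. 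To see the induced congruences are distinct, compute
\[
U_n\cdot(1,-n,0,\dots,0)^T=(0,-n,0,\dots,0)^T,\qquad U_{n+1}\cdot(1,-n,0,\dots,0)^T=(1,-n,0,\dots,0)^T;
\]
thus $\bx^{(1,-n,0,\dots,0)}<1$ under $P(U_n)$ but $\bx^{(1,-n,0,\dots,0)}>1$ under $P(U_{n+1})$, showing $P(U_n)\neq P(U_{n+1})$ for each $n$.

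The ACC claim follows quickly. Proposition \ref{prop: finiteminprimes} asserts that in any $\mb$-algebra satisfying ACC on radical congruences, every congruence has only finitely many minimal primes lying over it; applying this contrapositively to the trivial congruence of $\mb(\bx)$ with $k>1$ yields the failure of ACC on radical congruences. The equivalence with ACC on QC congruences is then immediate from Theorem \ref{thm:primesofb(x)}(v), which identifies the two classes of congruences in $\mb(\bx)$. The only step requiring any real thought is constructing the family $\{U_n\}$ and checking distinctness; the rest is bookkeeping built directly on the preceding theorems.
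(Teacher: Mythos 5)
Your argument follows the same route as the paper: Theorem~\ref{thm:primesofb(x)}(i) reduces the question to counting term orderings, Proposition~\ref{prop: finiteminprimes} (applied contrapositively) yields the ACC failure, and Theorem~\ref{thm:primesofb(x)}(v) gives the radical/QC equivalence. The paper simply asserts that there are two term orderings for $k=1$ and infinitely many for $k>1$; you supply an explicit family $\{U_n\}$, which is a nice addition but not a different method.

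One small logical gap: you conclude ``infinitely many'' from $P(U_n)\neq P(U_{n+1})$ for each $n$, but a sequence whose consecutive terms differ can still take only finitely many values. What you actually need is that the $P(U_n)$ are pairwise distinct, and your own computation gives this with no extra work: for any $m>n$, $U_m\cdot(1,-n,0,\dots,0)^T=(m-n,-n,0,\dots,0)^T$ has positive first entry, so $\bx^{(1,-n,0,\dots,0)}>1$ under $P(U_m)$, while it is $<1$ under $P(U_n)$. Stating it for a general $m>n$ rather than just $m=n+1$ closes the gap. Everything else (admissibility of $U_n$, the determinant check, the $k=1$ sign dichotomy, the use of Propositions~\ref{prop: finiteminprimes} and Theorem~\ref{thm:primesofb(x)}(v)) is correct.
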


Now we turn to $\mb[\bx]$. Recall from (iii) of Proposition \ref{prop: orderingbasic} that the primes of $\mb[\bx]$ with trivial kernel are restrictions of the primes of $\mb(\bx)$. Here we also have over any prime $P[U]$ the strictly increasing chain $$P[U]=P[U(r(U))]\subset P[U(r(U)-1)] \subset \dots\subset P[U(0)].$$
It follows that $dim(P[U]) \geq dim(P(U)) = r(U)$,  the next proposition shows that the dimensions are in fact equal.

\begin{proposition}\label{prop:dimofP[U]}
For any admissible matrix $U$ we have that $dim(\mb[\bx]/P[U]) = r(U)$.
\end{proposition}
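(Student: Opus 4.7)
The inequality $\dim(\mb[\bx]/P[U]) \geq r(U)$ is already delivered by the chain $P[U] = P[U(r(U))] \subsetneq P[U(r(U)-1)] \subsetneq \cdots \subsetneq P[U(0)]$ noted above the statement, so the task is to prove $\dim(\mb[\bx]/P[U]) \leq r(U)$. I plan to induct on $r(U)$; the base case $r(U) = 0$ is immediate since then $P[U] = P[U(0)]$ has quotient $\mb$.

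For the inductive step, let $P[U] = Q_0 \subsetneq Q_1 \subsetneq \cdots \subsetneq Q_n$ be any chain of proper primes in $\mb[\bx]$. It suffices to show $\dim(\mb[\bx]/Q_1) \leq r(U) - 1$, for then $n - 1 \leq r(U) - 1$. If $Ker(Q_1) = \{0\}$, Proposition \ref{prop: orderingbasic}(iii) extends $Q_1$ to a prime of $\mb(\bx)$ above $P(U)$, which by Proposition \ref{prop: primes of term orderings} must be $P(U(m))$ for some $m < r(U)$; hence $Q_1 = P[U(m)]$, and the inductive hypothesis applied to $U(m)$ gives $\dim(\mb[\bx]/Q_1) = m \leq r(U) - 1$.

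The substantive case is $Ker(Q_1) \neq \{0\}$. By Proposition \ref{prop:kernels}(ii), $Ker(Q_1)$ is generated by a nonempty subset $S$ of the variables $\{x_1, \ldots, x_k\}$, and $\mb[\bx]/Q_1 \cong \mb[\bx \setminus S]/\overline{Q_1}$, where $\overline{Q_1}$ is a prime with trivial kernel. Let $\overline{P[U]}$ denote the image of $P[U]$ in $\mb[\bx \setminus S]$. The inclusion $\overline{P[U]} \subseteq \overline{Q_1}$ forces $\overline{P[U]}$ to have trivial kernel, and a direct inspection of the generating pairs then shows $\overline{P[U]} = P[U']$, where $U'$ is obtained from $U$ by deleting the columns indexed by $S$ and removing the newly redundant rows. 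The key lemma will be that $r(U') \leq r(U) - 1$; granted this, the inductive hypothesis applied to $U'$ in the smaller polynomial semiring yields $\dim(\mb[\bx \setminus S]/P[U']) = r(U') \leq r(U) - 1$, and so $\dim(\mb[\bx]/Q_1) \leq r(U) - 1$ as required.

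The main obstacle is this rank-drop lemma, in which the properness of $Q_1$ and the triviality of the kernel of $\overline{P[U]}$ must be exploited jointly. My plan is a proof by contradiction: if $r(U') = r(U)$, then for some $x_\ell \in S$ the $\ell$-th column $\bu_\ell$ of $U$ lies in the $\mq$-span of $\{\bu_m : x_m \notin S\}$, yielding an integer identity $N \bu_\ell = \sum_{m : x_m \notin S} a_m \bu_m$. Writing the corresponding vector $\bn := N e_\ell - \sum_{m : x_m \notin S} a_m e_m \in \mz^k$ as $\bn = \bn^+ - \bn^-$ with $\bn^\pm \in \mn^k$, so that $\bn^+$ involves $x_\ell$ and $\bn^-$ is supported on $\bx \setminus S$, the identity $U\bn = \b0$ gives $\bx^{\bn^+} \sim \bx^{\bn^-}$ modulo $P[U]$. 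If $\bn^- = \b0$, this identifies $\bx^{\bn^+}$ with $1$ modulo $P[U]$, while $x_\ell \in Ker(Q_1)$ forces $\bx^{\bn^+} = 0$ in $\mb[\bx]/Q_1$, contradicting the properness of $Q_1$. Otherwise $\bx^{\bn^-}$ is a nonzero monomial in $\mb[\bx \setminus S]$, and projecting the relation $(\bx^{\bn^+} + \bx^{\bn^-}, \bx^{\bn^+}) \in P[U]$ to $\mb[\bx \setminus S]$ produces $(\bx^{\bn^-}, 0) \in \overline{P[U]}$, contradicting the triviality of its kernel.
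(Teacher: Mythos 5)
The overall scaffolding of your proof matches the paper's: induct on $r(U)$, reduce to bounding the dimension of the quotient by a prime $Q_1$ strictly above $P[U]$, split on whether $Ker(Q_1)$ is trivial, and in the nontrivial case pass to $\mb[\bx\setminus S]$ and show a rank drop. The rank-drop lemma is correctly identified as the crux. However, your proof of that lemma has a genuine gap.

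The claim that $r(U') = r(U)$ forces some $\bu_\ell$ ($x_\ell \in S$) to lie in the $\mq$-span of $\{\bu_m : x_m \notin S\}$ does not follow. Having all rows of $U''$ non-redundant (equivalently, $U''$ admissible, hence linearly independent rows and $\mr$-rank $l$) only puts $\bu_\ell$ in the $\mr$-span of the remaining columns, not the $\mq$-span. Admissible matrices can, and for some term orderings must, have irrational entries — e.g.\ $U = (1,\sqrt 2)$ on two variables — and for such $U$ the columns satisfy no nontrivial $\mq$-linear relation at all. More pointedly: whenever $U$ is the defining matrix of a genuine term ordering, $Ker(U)\cap \mz^k = \{\b0\}$, so there simply is no nonzero integer vector $\bn$ with $U\bn = \b0$; your plan of producing such an $\bn$ and extracting a congruence $\bx^{\bn^+}\sim\bx^{\bn^-}$ cannot succeed. (It is true that, as you hope, $r(U') = r(U)$ turns out to be impossible under the stated hypotheses — but one cannot infer the $\mq$-span statement from $r(U')=r(U)$ alone, and using the eventual conclusion to justify the step would be circular.)

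The paper avoids integer relations altogether. Its argument in the nontrivial-kernel case is that the constraint ``every monomial involving a variable of $S$ is $< 1$'', combined with primality of $Q$ and $x_m \notin Ker(Q)$ for $x_m\notin S$, forces the leading row(s) of $U$ to vanish on every column outside $S$. Concretely: $\bu^{(1)}_m > 0$ for some $m\notin S$ is ruled out by comparing $x_\ell x_m^N$ with $1$ for large $N$; and $\bu^{(1)}_m < 0$ is ruled out by comparing $x_m^b$ with $x_\ell$ for suitable $b$, which would place $x_m$ in $Ker(Q)$. Hence the first row of $U''$ is the zero row, so it is redundant and gets removed, giving $r(U') < r(U)$ directly — no integer vector in $Ker(U)$ required. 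To repair your proof you would need to replace the $\mq$-span step with an argument of this form.
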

\begin{proof}
We will prove by induction on $r(U)$. The $r(U) = 0$ case is clear, since by our earlier conventions for the matrix with "zero rows" we have $\mb[\bx]/P[U] = \mb$ and $dim(\mb) = 0$. Let $U$ now be an arbitrary admissible matrix and $Q$ a prime congruence that is minimal amongst those that strictly contain $P[U]$, to complete the proof we need to show that $dim(\mb[\bx]/Q) \leq r(U)-1$. If $Ker(Q) = \{0\}$ then by (iii) of Proposition \ref{prop: orderingbasic} and Proposition \ref{prop: primes of term orderings} we have that $Ker(Q) = P[U(r(U)-1)]$ and then by the induction hypothesis we have $dim(\mb[\bx]/Q) = r(U)-1$. If $Ker(Q) \neq \{0\}$ then by Proposition \ref{prop:kernels}, $Ker(Q)$ is generated by a subset of the variables, say $x_1,\dots,x_j$. Also by the minimality of $Q$ we have that $Q = \<P(U)\cup\{(x_i,0)|1\leq i \leq j\}\>$. It follows that for some prime $P[U_Q]$ of $\mb[x_{j+1},\dots,x_k]$ the quotient $\mb[\bx]/Q$ is isomorphic to $\mb[x_{j+1},\dots,x_k]/P[U_Q]$. The matrix $U_Q$ can be obtained from $U$ by removing the first $j$ columns, then removing any possible redundant rows. Now since $(1,0) \notin Q$ by (iii) of Proposition \ref{prop: congbasic} we have that for any monomial $m$ containing any of the variables $x_1,\dots,x_j$, $m < 1$ in the ordering defined by $U$. This implies that the for some $1\leq i \leq r(U)$ the first $i$ rows of $U$ have to be such that all non-zero entries are in the first $j$ columns, and the first non-zero entry in those columns is negative. Consequently when the first $j$ columns are removed from $U$, then the first $i$ rows will have all $0$-s as the remaining entries, so they are removed when we obtain $U_Q$. In particular we have that $dim(\mb[\bx]/Q) = r(U_Q) < r(U)$ completing the proof.
\end{proof} 

Now we have the following theorem about the primes and radical of $\mb[\bx]$:

\begin{theorem}\label{thm:primesofb[x]} For the $k$-variable polynomial semiring $\mb[\bx]$ we have that,
\begin{itemize}
\item[(i)] For every prime congruence $P$ of  $\mb[\bx]$ there is a (possibly empty) subset $H$ of the variables $\bx$ and a prime $P[U]$ of the polynomial semiring $\mb[\bx']$ with variables $\bx'=\bx\setminus H$, such that  $P$ is generated by the pairs $\{(x_i,0)|\:x_i\in H\}$ and the image of $P[U]$ under the embedding $\mb[\bx']\hookrightarrow \mb[\bx]$.
\item[(ii)] The minimal prime congruences of $\mb[\bx]$ have $\{0\}$ as their kernel and are all of the form $P[U]$, where $U$ is an admissible matrix with $Ker(U)\cap\mz^k = \{\b0\}$.
\item[(iii)] $dim(\mb[\bx]) = k$. 
\item[(iv)] The pair $(f,g)$ lies in the radical of the trivial congruence of $\mb[\bx]$ if and only if $newt(f) = newt(g)$.
\item[(v)] The $\mb$-algebra $\mb[\bx]/Rad(\diag)$ is isomorphic to the  $\mb$-algebra with elements the lattice polytopes lying in the non negative quadrant $\mr_{+,0}^k$, and addition being defined as the convex hull of the union, and multiplication as the Minkowski sum.
\item[(vi)] The congruence $Rad(\diag)$ is QC.
\end{itemize}
\end{theorem}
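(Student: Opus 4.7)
The strategy is to establish (i) and (ii) first, giving a concrete classification of primes; the remaining parts (iii)--(vi) then follow by mild adaptations of the proof of Theorem \ref{thm:primesofb(x)} together with Proposition \ref{prop:dimofP[U]}.

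For (i), given a prime $P$ of $\mb[\bx]$, Proposition \ref{prop:kernels}(ii) furnishes a subset $H$ of the variables generating $Ker(P)$. Setting $\bx' = \bx \setminus H$, the restriction $P|_{\mb[\bx']}$ is prime by Proposition \ref{prop:primeprop}(v) and has trivial kernel, hence equals some $P[U]$ by Proposition \ref{prop: orderingbasic}(iii). The congruence $P_0$ generated by $\{(x_i,0) : x_i \in H\}$ together with the image of $P[U]$ is clearly contained in $P$; for the reverse, any $(f,g) \in P$ reduces modulo $P_0$ to a pair $(f', g') \in \mb[\bx']$ by erasing all terms involving variables of $H$, and this reduced pair lies in $P|_{\mb[\bx']} = P[U] \subseteq P_0$.

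For (ii), the primes $P[U]$ with $U$ a $k \times k$ admissible matrix satisfying $Ker(U) \cap \mz^k = \{\b0\}$ are minimal, since any strictly smaller prime would have trivial kernel and hence correspond via Proposition \ref{prop: orderingbasic}(iii) to a prime of $\mb(\bx)$ strictly below the minimal $P(U)$ from Theorem \ref{thm:primesofb(x)}(i). For the converse, I will produce a minimal prime inside any given prime $P$ as follows: starting from the decomposition in (i), first replace $U'$ with a refinement $U''$ of maximal rank $|\bx'|$ (possible by Proposition \ref{prop: primes of term orderings}), then define a $k \times k$ matrix $\tilde U$ by prepending $|H|$ rows to $U''$ (with $U''$ padded by zeros in the $H$-columns), each prepended row having $-1$ in a distinct $H$-coordinate and $0$ elsewhere. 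The resulting $\tilde U$ has full rank and trivial integer kernel, so it defines a term ordering; and under this ordering every $H$-variable is strictly below $1$. A short case split on $(f, g) \in P[\tilde U]$---either the common leading term lies in $\mb[\bx']$ and the pair reduces to a relation in $P[U''] \subseteq P[U'] \subseteq P$, or $f$ and $g$ both consist entirely of $H$-divisible monomials and hence vanish modulo $P$---then yields $P[\tilde U] \subseteq P$, as required.

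With the classification in hand, (iii) is immediate from Proposition \ref{prop:dimofP[U]} applied to a minimal $P[U]$ with $r(U) = k$. Parts (iv) and (v) repeat the Newton-polytope argument of Theorem \ref{thm:primesofb(x)}(iii)-(iv): the intersection of the minimal primes identifies polynomials precisely when the vertex sets of their Newton polytopes agree, and in the polynomial setting these polytopes lie in $\mr_{+,0}^k$ since exponents live in $\mn^k$. For (vi), every minimal prime has trivial kernel, so $(a,0) \in Rad(\diag)$ forces $a = 0$; hence given $(a,0)\alpha \in Rad(\diag)$ with $a \neq 0$, the prime property applied in each minimal prime gives $\alpha \in P[U]$ for every minimal $P[U]$, i.e., $\alpha \in Rad(\diag)$, proving the QC property via Proposition \ref{prop:primeprop}(iii). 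The principal technical step of the entire proof is the construction of $\tilde U$ in (ii) together with the verification that $P[\tilde U] \subseteq P$; every other argument is either directly inherited from $\mb(\bx)$ or routine bookkeeping.
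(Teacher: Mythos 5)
Your proposal follows essentially the same strategy as the paper's proof: use Proposition \ref{prop:kernels} to split off the kernel variables $H$, identify the restriction to $\mb[\bx']$ with some $P[U']$ via Proposition \ref{prop: orderingbasic}(iii), and for (ii) build a term ordering in which the $H$-variables are ordered below $1$ by prepending rows of the form $-e_i$; the case split you give to verify $P[\tilde U]\subseteq P$ is the content the paper leaves implicit, so that part is a welcome elaboration.

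Two small inaccuracies worth correcting, though neither breaks the argument. First, the refinement of $U'$ to a minimal prime of $\mb(\bx')$ is justified by Lemma \ref{lem: minprimes} (or Theorem \ref{thm:primesofb(x)}(i)), not by Proposition \ref{prop: primes of term orderings}, which only describes the primes \emph{above} a given $P(U)$. Second, you should not insist that $U''$ (and hence $\tilde U$) be square of ``maximal rank.'' An admissible matrix with $Ker(U'')\cap\mz^{k'}=\{\b0\}$ can have rank strictly less than $k'$---for example $U''=[1\ \sqrt2]$ with $k'=2$---and one cannot add a non-redundant further row in that situation, since any integer vector killed by $U''$ is already $\b0$. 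What your construction actually needs is only $Ker(U'')\cap\mz^{k'}=\{\b0\}$ (equivalently, $U''$ is the defining matrix of a genuine term ordering on $\bx'$), which is exactly what Lemma \ref{lem: minprimes} provides. With that adjustment, $\tilde U$ is admissible with $Ker(\tilde U)\cap\mz^{k}=\{\b0\}$ and your verification $P[\tilde U]\subseteq P$ works unchanged, as does the rest of (iii)--(vi).
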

\begin{proof}
(i) follows from Proposition \ref{prop:kernels}, Theorem \ref{thm:primesofb(x)} and (iii) of Proposition \ref{prop: orderingbasic}. For (ii) let $Q$ be a minimal prime congruence with $Ker(Q) \neq 0$. We can assume that $Ker(Q)$ is generated by the variables $x_1,\dots,x_j$ for some $j$. By the minimality of $Q$, $\mb[\bx]/Q$ is isomorphic to $P[U']$ where $U'$ is the defining matrix of a term ordering on the variables $x_{j+1},\dots,x_k$. Let $U$ be the defining matrix of the term ordering that first orders the variables $x_1,\dots,x_j$ reverse lexicographically, then the rest of the variables by $U'$ (so the first $j$ rows of $U$ are negatives of the first $j$ rows of the identity matrix). Now for the prime congruence $P[U]$ we have $Ker(P[U]) = \{0\}$ and $P[U] \subseteq Q$. (iii) follows from (ii) and Proposition \ref{prop:dimofP[U]}. (iv) and (v) follow by the same argument as in the proof of Theorem \ref{thm:primesofb(x)}. Finally, (vi) also follows  the same way as in Theorem \ref{thm:primesofb(x)} after considering that the radical is the intersection of the minimal primes and minimal primes of $\mb[\bx]$ have trivial kernels.
\end{proof}

\subsection{The prime congruences of $\zma(\bx)$ and $\zma[\bx]$}\label{sec:z[x]}
The description of the primes and the radical of $\zma(\bx)$ and $\zma[\bx]$ can be easily derived from that of $\mb(\bx)$ and $\mb[\bx]$. The key observation is that $\zma \cong \mb(t)/\<(1+t,t)\>$ and consequently $\zma(\bx) = \mb(t,\bx)/\<(1+t,t)\>$ where $\mb(t,\bx)$ is just the semiring of Laurent polynomials over $\mb$ with $k+1$ variables $(t,x_1,\dots,x_k)$. Hence prime congruences of  $\zma(\bx)$ can be identified with the prime congruences of  $\mb(t,\bx)$ containing $(t,1+t)$. By Theorem \ref{thm:primesofb(x)} these are of the form $P(U)$ where $U$ is an admissible matrix with $k+1$ columns, such that the either its first column has all $0$ entries or the first non-zero entry of the first column is positive. We will call such a matrix {\it z-admissible}, and we will denote the congruence defined by it in $\zma(\bx)$ by $P(U)_{\mz}$ and its restriction to $\zma[\bx]$ by $P[U]_{\mz}$. \par\smallskip
By the Newton polytope, $newt(f)$, of a polynomial $f = \sum_i t^{c_i}\bx^{\bn_i}$ in $\zma(\bx)$ or $\zma[\bx]$, we mean the convex hull of the points $[c_i, \bn^i] \in \mz^{k+1}$. We define the {\it hat} of $newt(f)$ to be the set $$\overline{newt(f)} = \{(y_0,\dots,y_k) \in newt(f)\mid \forall z>y_0:\;(z,y_1,\dots,y_k) \notin newt(f) \}. $$
We have the following theorem:

\begin{theorem}\label{thm:primesofz(x)} For the k-variable polynomial semiring $\zma[\bx]$ and the k-variable Laurent polynomial semiring $\zma(\bx)$ we have that:
\begin{itemize}
\item [(i)] The minimal primes of $\zma(\bx)$ (resp. $\zma[\bx]$) are of the form  $P(U)_{\mz}$ (resp. $P[U]_{\mz}$) for a z-admissible matrix $U$ with $k+1$ columns satisfying $Ker(U) \cap \mz^{k+1} = \{\b0\}$.
\item[(ii)] $dim(\zma(\bx)) = dim(\zma[\bx]) = k+1$
\item[(iii)] For any $f,g \in \zma(\bx)$ (resp. $f,g \in \zma[\bx]$) the pair $(f,g)$ lies in the radical of the trivial congruence of  $\zma(\bx)$ (resp. $\zma[\bx]$) if and only if $\overline{newt(f)} = \overline{newt(g)}$.
\item[(iv)] Every radical congruence of $\zma(\bx)$ is QC. $Rad(\diag)$ in $\zma[\bx]$ is QC.
\end{itemize}
\end{theorem}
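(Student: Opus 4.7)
My approach is to lift the results from Theorem~\ref{thm:primesofb(x)} and Theorem~\ref{thm:primesofb[x]} using the identification $\zma(\bx)\cong \mb(t,\bx)/\langle(1+t,t)\rangle$ established just before the statement, under which congruences of $\zma(\bx)$ correspond to congruences of $\mb(t,\bx)$ containing $(1+t,t)$ and primes to primes. The first observation is that a prime $P(U)$ of $\mb(t,\bx)$ contains $(1+t,t)$ precisely when $1\leq t$ holds in the totally ordered quotient, which by Proposition~\ref{prop: Robbiano} is equivalent to z-admissibility of $U$. Part (i) for $\zma(\bx)$ then follows directly from Theorem~\ref{thm:primesofb(x)}(i) and Lemma~\ref{lem: minprimes} intersected with the z-admissibility condition; for $\zma[\bx]$ I mirror the proof of Theorem~\ref{thm:primesofb[x]}(ii) — given a candidate minimal prime with non-trivial kernel, I refine it strictly by a z-admissible ordering that places the kernel variables below $1$ reverse-lexicographically, contradicting minimality.

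For part (ii), $\dim(\zma(\bx))\leq k+1$ follows from Theorem~\ref{thm:primesofb(x)}(ii) together with Proposition~\ref{prop:primeprop}(v) applied to the quotient map $\mb(t,\bx)\to\zma(\bx)$. The matching lower bound is realised by taking $U$ to be the $(k+1)\times(k+1)$ identity (z-admissible, trivial integer kernel) and invoking Proposition~\ref{prop: primes of term orderings} to obtain a chain of primes of length $k+1$; every truncation $U(i)$ remains z-admissible because its first column still starts with $1$. The same chain, combined with Proposition~\ref{prop:dimofP[U]}, takes care of $\zma[\bx]$.

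Part (iii) is the geometric heart of the argument. Since by (i) the minimal primes are the $P(U)_\mz$ for z-admissible $U$ with trivial integer kernel, and each such prime identifies every polynomial with its unique leading monomial, the claim reduces to the identity
\[
\{\text{leading exponents of }f\text{ as }U\text{ varies}\}=\text{vertex set of }\overline{newt(f)}.
\]
For the inclusion $\subseteq$, if the leading exponent $(c_0,\bn_0)$ were not in the hat then some $(c',\bn_0)\in newt(f)$ with $c'>c_0$ exists; writing it as a convex combination of monomial exponents of $f$ and noting that $U(c'-c_0,\b0)^T$ begins with a strictly positive entry (by z-admissibility), at least one of those exponents would strictly dominate $(c_0,\bn_0)$, a contradiction. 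For $\supseteq$, a vertex $v$ of $\overline{newt(f)}$ has $(1,\b0)$ outside the tangent cone of $newt(f)$ at $v$, so by polar-cone duality the normal cone at $v$ contains some $\bu$ with $u_0>0$; I place $\bu$ as the first row of $U$, adjoin further rows breaking the remaining ties among the finitely many exponents appearing in $f$, and complete to a full-rank $(k+1)\times(k+1)$ matrix — the result is z-admissible (because $u_0>0$), has trivial integer kernel, and selects $v$ as unique leading exponent. To close the argument I use the geometric fact that $\overline{newt(f)}$ is the graph of the upper concave PL envelope of its vertices above the projection $\pi(newt(f))$, so equality of the vertex sets forces $\overline{newt(f)}=\overline{newt(g)}$. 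The $\zma[\bx]$ version follows by restriction along $\zma[\bx]\hookrightarrow\zma(\bx)$ since a polynomial and its image share the same hat.

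Part (iv) is then short. The analogue of Proposition~\ref{prop:kernels}(i) — whose proof transports verbatim because every monomial of $\zma(\bx)$ is invertible — shows every proper congruence of $\zma(\bx)$ has trivial kernel, hence so does every prime; the same reasoning as in Theorem~\ref{thm:primesofb(x)}(v) then upgrades this to the statement that every radical congruence of $\zma(\bx)$ is QC. In $\zma[\bx]$ the minimal primes have trivial kernel by (i), and the argument of Theorem~\ref{thm:primesofb[x]}(vi) yields that $Rad(\diag)$ is QC. The principal obstacle in the entire proof is the geometric content of (iii): the normal-cone characterisation of hat vertices and the reconstruction of the hat from its vertex set through the upper concave envelope; the remaining parts are essentially a transport of the $\mb$-theory along the z-admissibility condition.
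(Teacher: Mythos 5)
Your approach matches the paper's: lift everything from $\mb(t,\bx)$ via $\zma(\bx)\cong\mb(t,\bx)/\langle(1+t,t)\rangle$, identify primes of $\zma(\bx)$ with primes of $\mb(t,\bx)$ containing $(1+t,t)$, observe that this is exactly z-admissibility of the defining matrix, and then specialize the $\mb$-theory. Parts (i) and (ii) are fine. Part (iii) is also correct; your polar-cone argument for producing a z-admissible $U$ that selects a given hat vertex is a clean alternative to the paper's construction via a positive combination of outward normals of the hat facets (both need the same tie-breaking remark when $\bu$ sits on the boundary of the normal cone), and you correctly isolate and justify the step the paper leaves implicit, namely that $\overline{newt(f)}$ is recovered from its vertex set as the graph of the upper concave envelope over $\pi(newt(f))$.

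In (iv) there is a mismatch with the printed statement. You prove, transporting the argument of Theorem \ref{thm:primesofb(x)}(v), that every \emph{radical} congruence of $\zma(\bx)$ is QC, but the theorem as written asserts that \emph{every} congruence of $\zma(\bx)$ is QC. The paper's own proof (``(iv) follows the same way as in Theorems \ref{thm:primesofb(x)} and \ref{thm:primesofb[x]}'') read literally also only yields the radical case, since the cited argument needs the pair to lie in every prime over the congruence. In fact the stronger assertion appears to be false: in $\zma(x)$ take $E=\langle((x+1)x,(x+1)x^2)\rangle$; then $(x+1,0)\notin E$ and $(x+1)(1,x)\in E$, yet $(1,x)\notin E$ (in the one-step/chain description of a generated congruence, writing $1=a+c(x^2+x)$ or $1=a+c(x^3+x^2)$ in $\zma(x)$ forces $c=0$, so no chain leaves $1$), hence $\zma(x)/E$ is not cancellative. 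So your ``radical'' version is the one that is actually provable, and the word ``radical'' seems to have been dropped in the statement of (iv); this should be flagged explicitly rather than silently proving a weaker claim than what the theorem asserts.
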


\begin{proof}
(i) and (ii) follows from the discussion preceding the theorem. For (iii) by the same argument as in the proof of Theorem \ref{thm:primesofb(x)} we need to show that the vertices of $\overline{newt(f)}$ are precisely the exponents of the monomials of $f$ that are maximal with respect to the ordering in the quotient of some minimal prime.  By (i) we have that in both cases minimal primes correspond to term orderings of the variables $(t,\bx)$ such that $1 < t$ and it is clear that the leading monomial of $f$ with respect to such a term ordering has to be one of the vertices lying on $\overline{newt(f)}$. For the other direction for a vertex $v$ on $\overline{newt(f)}$ let $\bu$ be a linear combination with positive coefficients of the outwards pointing normal vectors of the $k$-dimensional faces of $\overline{newt(f)}$ containing $v$, such that the first coordinate of $\bu$ is positive. Such a $\bu$ can be chosen since the outwards pointing normal vector of any $k$-dimensional face of  $\overline{newt(f)}$ have positive first coordinate, so if we set the coefficients corresponding to those faces large enough $\bu$ will also have a positive first coordinate. Moreover, $v$ is the unique vertex that maximizes the scalar product taken with $\bu$ on $\overline{newt(f)}$. Hence we can choose a z-admissible matrix $U$ with $\bu$ as its first row and $Ker(U) \cap \mz^{k+1} = \{\b0\}$ and in the term ordering defined by $U$ the leading term of $f$ will be the monomial with exponent $v$. Finally, (iv) follows the same way as in Theorems \ref{thm:primesofb(x)} and \ref{thm:primesofb[x]}.
\end{proof}
\subsection{The prime congruences of $\rma(\bx)$ and $\rma[\bx]$}\label{sec:T[x]}

In this section we describe the primes and the radical of the semirings of polynomials and Laurent polynomials with coefficients in $\rma$. 

A matrix $U$ whose first column has either all zero entries or its first non-zero entry is positive can define a prime congruence $P(U)_{\rma}$ of $\rma(\bx)$, which, as in the previous cases is generated by pairs $(t^{c_1}\bx^{\bn_1} + t^{c_2}\bx^{\bn_2}, t^{c_2}\bx^{\bn_2})$ such that $U((c_2,\bn_2)-(c_1,\bn_2))$ is either the $\b0$ vector or its first non-zero coordinate is positive. Clearly if $U$ is z-admissible and we consider $\zma(\bx)$ as a subsemiring of $\rma(\bx)$, we have $P(U)_{\rma}|_{\zma(\bx)}=P(U)_{\mz}$. However $P(U)_{\rma}$ might not be the only congruence that restricts to $P(U)_{\mz}$ as shown by the following example:

\begin{example}{\rm
Let $r \in \mr$ be an irrational number and let $U$ be the matrix that consists of the single line $[1\ r]$. Since $Ker(U) \cap \mz^2 = \{\b0\}$, $U$ defines a total ordering on $\mz^2$ and hence $P(U)$ is a minimal prime of $\mb(x_1,x_2)$ and $P(U)_{\mz}$ is a minimal prime of $\zma(x_1)$. Consequently any subsequent rows to $U$ would be redundant. However $Ker(U) \cap \mr\oplus\mz \neq \{0\}$, so $U$ does not define a total ordering on the monomials of $\rma(x_1)$, and one can add a subsequent row to $U$ which will give the ordering on the elements in $Ker(U) \cap (\mr \oplus \mz)$. For example denoting by $U_+$ the matrix which is obtained from $U$ by adding the row $[0\ 1]$ and $U_-$ the matrix which is obtained by adding the row $[0\ -1]$, we have that $P(U_+)_{\rma}$ and $P(U_+)_{\rma}$ are distinct minimal primes of $\rma(x_1)$ both strictly containing $P(U)_{\rma}$, and $P(U_+)_{\rma}|_{\zma(\bx)}=P(U_-)_{\rma}|_{\zma(\bx)}=P(U)_{\rma}|_{\zma(\bx)}=P(U)_{\mz}$}.
\end{example}

Motivated by this example we define an $l\times (k+1)$ matrix $U$ to be {\it t-admissible} if its rows are non-redundant with respect to the ordering defined on $\mr\oplus \mz^k$, i.e.  for every $1\leq i \leq l$ there is a ${\pmb v} \in \mr\oplus \mz^k$ such that the $i$-th is the first non-zero entry of $U{\pmb v}$; moreover, we require that in the first column of $U$ either all of the entries are $0$ or its first non-zero entry is positive. Clearly z-admissible matrices are also t-admissible, but some t-admissible matrices, like $U_+$ and $U_-$ from the above example, might not be z-admissible. Then the prime congruence $P(U)_{\rma}$ is defined for all t-admissible matrices $U$, and $P(U)_{\rma}|_{\zma(\bx)}=P(U')_{\mz}$ where $U'$ is the matrix we obtain from $U$ after removing rows that become redundant when $U$ defines an ordering of the monomials with coefficients in $\zma$.
The restriction of $P(U)_{\rma}$ to $\rma[\bx]$ will be denoted by $P[U]_{\rma}$. As previously, we aim to show that all primes of $\rma(\bx)$ are of the form $P(U)$ for a t-admissible $U$. For this we will need the following variation on the result from \cite{Rob85} which we recalled in Proposition \ref{prop: Robbiano}. 

\begin{lemma}\label{lem:RobbforT}
For any group ordering $\preceq$ on the multiplicative group of the monomials of $\rma(\bx)$ satisfying that for every $c_1,c_2\in \mr$ and $\bn\in \mz^k$ we have that $t^{c_1}x^{\bn_1} \preceq t^{c_2}x^{\bn_2}$ if and only if $c_1 \leq c_2$ by the usual ordering on $\mr$, there exits a t-admissible matrix $U$ such that $t^{c_1}x^{\bn_1} \prec t^{c_2}x^{\bn_2}$ if and only if the first non-zero coordinate of $U((c_2,\bn_2)-(c_1,\bn_1))$ is positive. 
\end{lemma}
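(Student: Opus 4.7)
The hypothesis forces the $t$-exponent to dominate the ordering, so my plan is to exhibit $U$ as a lexicographic extension: its first row records the standard order on $\mr$, and the remaining rows come from applying Proposition \ref{prop: Robbiano} to the group ordering induced on the sublattice $\mz^k$. I would identify the multiplicative group of monomials of $\rma(\bx)$ with the additive group $\mr\oplus\mz^k$ via $t^c\bx^\bn\leftrightarrow(c,\bn)$. Under this identification, the hypothesis translates into the statement that $(c,\bn)\succ\b0$ whenever $c>0$ and $(c,\bn)\prec\b0$ whenever $c<0$, i.e.\ the sign of an element is determined by its first coordinate as soon as that coordinate is non-zero. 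Combined with translation invariance of $\preceq$, this yields the lexicographic description $(c_1,\bn_1)\prec(c_2,\bn_2)$ iff either $c_1<c_2$, or $c_1=c_2$ and $\bn_1\prec_H\bn_2$, where $\preceq_H$ denotes the restriction of $\preceq$ to the subgroup $H=\{0\}\oplus\mz^k\cong\mz^k$.

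Next I would apply Proposition \ref{prop: Robbiano} to $\preceq_H$, which is a group ordering on $\mz^k$, obtaining an admissible $l'\times k$ matrix $U_H$ (with $l'\le k$) such that $\bn\succ_H\b0$ iff the first non-zero entry of $U_H\bn$ is positive. I would then form the $(l'+1)\times(k+1)$ matrix $U$ whose first row is $(1,0,\ldots,0)$ and whose $(i+1)$-th row is $(0,U_H[i])$ for $i=1,\dots,l'$, where $U_H[i]$ denotes the $i$-th row of $U_H$. For any $v=(c,\bn)\in\mr\oplus\mz^k$ one then has $Uv=(c,U_H\bn)^T$, and the desired equivalence follows directly from the lexicographic description above: if $c>0$ the first entry of $Uv$ already witnesses positivity, while if $c=0$ positivity is decided by the first non-zero entry of the block $U_H\bn$.

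Finally I would verify t-admissibility. The first column of $U$ is $(1,0,\ldots,0)^T$, whose first non-zero entry is positive, as required. For non-redundancy of the rows: row $1$ is witnessed by $v=(1,\b0)\in\mr\oplus\mz^k$, for which the first non-zero entry of $Uv$ sits in row $1$; for $i\ge 1$, pick a vector $\bn\in\mz^k$ witnessing the non-redundancy of the $i$-th row of $U_H$, so that the first non-zero entry of $U_H\bn$ lies in row $i$, and then the vector $v=(0,\bn)\in\mr\oplus\mz^k$ places the first non-zero entry of $Uv$ in row $i+1$ of $U$.

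The main obstacle is really just a careful parsing of the hypothesis: once one recognizes that it forces the $t$-coordinate to dominate, the problem collapses onto Robbiano's theorem applied to the subgroup $\mz^k$, and the matrix $U$ is assembled by prefixing a new row $(1,0,\ldots,0)$ and a new all-zero column. No genuinely new classification argument beyond Proposition \ref{prop: Robbiano} is needed, and the real-valued entries from the first coordinate cause no difficulty since the notion of ``first non-zero entry'' still makes sense for a finite matrix with real entries.
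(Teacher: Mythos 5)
Your argument rests on a misreading of the hypothesis. The condition in the lemma (as the paper's own proof interprets and uses it) says only that for a \emph{fixed} exponent vector $\bn$ the monomials $t^{c}\bx^{\bn}$ are ordered by the usual order on $c$; equivalently, the restriction of $\preceq$ to the subgroup $\mr\oplus\{\b0\}$ is the standard ordering of $\mr$. You instead read it as the much stronger assertion that the sign of an arbitrary $(c,\bn)$ is determined by the sign of $c$ whenever $c\neq 0$, i.e.\ that the $t$-coordinate strictly dominates the ordering. That stronger assertion does not follow from the hypothesis, and the orderings the lemma must cover include many where it fails. For example, take $k=1$ and order $\mr\oplus\mz$ lexicographically with the $\mz$-coordinate dominant: $(c,n)\succ\b0$ iff $n>0$, or $n=0$ and $c>0$. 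This is a group ordering, it restricts to the usual ordering on $\mr\oplus\{0\}$ exactly as required, and yet $(1,-1)\prec\b0$ despite having positive first coordinate. Its defining t-admissible matrix has first row $(0,1)$, not $(1,0)$.

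Consequently your construction, which always produces a $U$ whose first row is $(1,0,\dots,0)$, covers only those orderings where $t$ dominates and misses the rest; in particular it cannot recover the full family of minimal primes of $\rma(\bx)$ needed for Theorem~\ref{thm: primesofT[x]}(ii). The substantive content of the lemma is precisely that the first row $\bu$ of $U$ may have zero first coordinate, so that one cannot simply prefix the row $(1,0,\dots,0)$ and invoke Proposition~\ref{prop: Robbiano} on $\mz^k$. The paper handles this by adapting Robbiano's topological argument: extend $\preceq$ to $G=\mr\oplus\mq^k$, form $G_+$, $G_-$ and the boundary subspace $I_G$, use the hypothesis on $\mr\oplus\{\b0\}$ to show that $V_+$ and $V_-$ each contain an open quadrant so that $\dim I_G=k$, take $\bu$ to be a normal to $I_G$, and then recurse (via Proposition~\ref{prop: Robbiano} or induction) on the subgroup $G_0=\{\bv\mid\bu\cdot\bv=0\}$, which is isomorphic to $\mz^k$ or to $\mr\oplus\mz^l$ with $l<k$ depending on whether $\bu$ has nonzero first coordinate. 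This topological step is exactly what your proof skips, and it cannot be skipped.
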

\begin{proof}
First note that the multiplicative group of the monomials of $\rma(\bx)$ is isomorphic to the additive group $(\mr \oplus \mz^k, +)$. It follows from Lemma 1 of \cite{Rob85} (and can also be easily checked) that every group ordering of $(\mr \oplus \mz^k, +)$ uniquely extends to a group ordering of $G = (\mr \oplus \mq^k, +)$. By a slight abuse of notation let us denote the ordering induced on $G$ by $\preceq$ as well. Let $G_+$ denote the set $\{\bv\in G|\bv \succ \b0\}$ and $G_-$ denote the set $\{\bv\in G|\bv \prec \b0\}$. Now following the original argument from \cite{Rob85} we define $I_G$ to be the set of points $p \in \mr^{k+1}$ such that each open (Euclidean) neighbourhood of $p$ contains elements from both $G_+$ and $G_-$. It is easy to verify that $I_G$ is a linear subspace. Let $V_+$ (resp. $V_-$) denote the open set in $\mr^{k}$ that consists of points with an open neighbourhood that does not intersect $G_-$ (resp. $G_+$). Now we have that $\mr^{k+1}\setminus I_G = V_- \cup V_+$, so the complement of $I_G$ is the union of disjoint open sets and hence disconnected, it follows that $dim(I_G) \geq k$. On the other hand $V_+$ and $V_-$ each contain at least an open quadrant, so $dim(I_G) = k$. Let us note that this is where the argument would fail if one wanted to extend it to an arbitrary group ordering on $\mr \oplus \mz^k$, but in our case, due to the elements of $\mr \oplus \{\b0\}$ being ordered in the usual way, for the vector ${\pmb e_0} = (1,0,\dots,0)$ and a $\mz$-basis ${\pmb e_1},\dots,{\pmb e_k}$ of $\mz^k$ satisfying ${\pmb e_i} \succ \b0$, we have that the positive $\mr$-linear combinations of ${\pmb e_0},\dots,{\pmb e_k}$ are indeed in $V_+$ and the negatives of these are in $V_-$. Now for the normal vector $\bu$ of $I_G$ pointing towards $V_+$ and any $\bv_1,\bv_2 \in G$ we have that $\bu\cdot(\bv_2-\bv_1) > 0 \Rightarrow \bv_1 \prec \bv_2$, where $\cdot$ denotes the usual scalar product on $\mr^{k+1}$, so $\bu$ can be chosen as the first row of $U$. Moreover, the subgroup $G_0 = \{\bv\in G|\bu\cdot\bv=0 \}$ is isomorphic to $\mz^k$ when the first coordinate of $\bu$ is non-zero, and it is isomorphic to $\mr \oplus \mz^l$ for some $l<k$ if the first coordinate of $\bu$ is zero. Hence either by Proposition \ref{prop: Robbiano} or by induction we have that the ordering on $G_0$ is given by a matrix with at most $k$ rows, and by adding to that matrix $\bu$ as a first row we obtain the $U$ in the lemma.
\end{proof}

In the following proposition we will list the analogues of Propositions \ref{prop: orderingbasic}/(iii), \ref{prop:kernels}, \ref{prop: primes of term orderings}, \ref{prop:dimofP[U]} and Lemma \ref{lem: minprimes} for $\rma(\bx)$ and $\rma[\bx]$. We will omit the proofs since they are essentially the same as in the previous section. 
Recall that $U(i)$ denotes the matrix that consists of the first $i$ rows of $U$.

\begin{proposition}\label{prop:propsofT}
\begin{itemize}
\item [(i)] Primes of $\rma(\bx)$ always have $\{0\}$ as their kernel, and the kernel of a prime in $\rma[\bx]$ is generated by a subset of the variables $\bx$.
\item[(ii)] If $Q$ is a prime congruence of $\rma[\bx]$ with $Ker(Q) = \{0\}$, then $Q = P|_{\rma[\bx]}=P$ for some prime congruence $P$ of of $\rma(\bx)$.
\item[(iii)] Every congruence of $\rma(\bx)$ containing some $P(U)_{\rma}$ for an $l\times (k+1)$ t-admissible matrix $U$ is of the form $P(U(i))_{\rma}$) for some $0\leq i \leq l$. 
\item[(iv)] For an $l\times (k+1)$ t-admissible matrix $U$, we have  $dim(\rma(\bx)/P(U)_{\rma}) = dim(\rma[\bx]/P[U]_{\rma}) = r(U) = l$.
\item[(v)] Every prime of $\rma(\bx)$, contains a prime $P(U)_{\rma}$ for a t-admissible matrix $U$ with $Ker(U) \cap \mr \oplus \mz^{k} = \{\b0\}$.
\end{itemize}
\end{proposition}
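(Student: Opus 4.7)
The plan is to carry over each of the five statements from the corresponding results in Section \ref{sec:b[x]}, with Lemma \ref{lem:RobbforT} playing the role of Proposition \ref{prop: Robbiano} wherever totality of a group ordering is invoked. Throughout, I will write a monomial of $\rma(\bx)$ as $t^c\bx^{\bn}$ with $c\in\mr$, $\bn\in\mz^k$, and exploit that $\rma$ sits inside $\rma(\bx)$ as invertible monomials.

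For (i), by Proposition \ref{prop: congbasic}(iii) every kernel is generated by monomials, since if $(f,0)\in Q$ and $f=\sum_i t^{c_i}\bx^{\bn_i}$ then $f\ge t^{c_i}\bx^{\bn_i}$ for each $i$, forcing each summand into $Ker(Q)$. In $\rma(\bx)$ monomials are invertible, so a non-zero kernel element yields $(1,0)\in Q$, making $Q$ improper. In $\rma[\bx]$, a prime $P$ is QC (Proposition \ref{prop:primeprop}(iv)); thus $(t^c\bx^{\bn},0)\in P$ gives $(\bx^{\bn},0)\in P$ (cancel the invertible $t^c$), and repeated application of QC splits $\bx^{\bn}$ into variables. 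Statement (ii) is proved exactly as Proposition \ref{prop: orderingbasic}(iii): for a prime $Q$ of $\rma[\bx]$ with trivial kernel, define a relation $P$ on $\rma(\bx)$ via the rule $(t^{c_1}\bx^{\bn_1}/t^{c_2}\bx^{\bn_2},1)\in P \iff (t^{c_1}\bx^{\bn_1},t^{c_2}\bx^{\bn_2})\in Q$; QC of $Q$ makes this well-defined on the Laurent classes, the resulting quotient is totally ordered and cancellative, hence $P$ is prime by Proposition \ref{prop:primeorder}(ii), and clearly $P|_{\rma[\bx]}=Q$.

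For (iii), the quotient $\rma(\bx)/P(U)_\rma$ is a semifield, because every element is congruent to a monomial (the leading one under the total order given by $U$, using Proposition \ref{prop: congbasic}(iii)) and every monomial is invertible. Hence any congruence on this quotient is determined by the equivalence class of $1$, and we reproduce the argument of Proposition \ref{prop: primes of term orderings}: a single generator $(1,t^c\bx^{\bn})$ strictly larger than $P(U)_\rma$ has $U(c,\bn)\neq\b0$; if $s$ is the smallest index where a non-zero entry occurs, then the convexity trick of Proposition \ref{prop: congbasic}(iii) shows the generated congruence equals $P(U(s-1))_\rma$, and t-admissibility ensures each $P(U(i))_\rma$ is strictly contained in $P(U(i-1))_\rma$. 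Statement (iv) for $\rma(\bx)$ is then immediate: the chain in (iii) has length $l$. The statement for $\rma[\bx]$ follows by induction on $r(U)$, mirroring Proposition \ref{prop:dimofP[U]}. The base case is trivial; in the inductive step, a minimal prime $Q$ strictly containing $P[U]_\rma$ either has trivial kernel (use (ii) and the $\rma(\bx)$ result) or has kernel generated by some variables $x_{i_1},\ldots,x_{i_j}$, in which case $\rma[\bx]/Q$ is isomorphic to a polynomial semiring in the remaining variables modulo the prime defined by the matrix $U_Q$ obtained from $U$ by deleting those columns and the rows that become all-zero; since those deleted rows had their first non-zero entry in the killed columns and that entry had to be negative (because $(1,0)\notin Q$ forces such monomials to be strictly below $1$), $r(U_Q)<r(U)$, and the induction closes.

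The main obstacle is (v). Given a prime $Q$ of $\rma(\bx)$, the cancellative totally-ordered quotient $\rma(\bx)/Q$ identifies each class with a monomial, so $Q$ induces a (not necessarily total) group preorder $\leq_Q$ on the multiplicative group of monomials $\mr\oplus\mz^k$. Fix a reference total term ordering $\preceq_0$ on $\rma(\bx)$ whose defining matrix satisfies the $\rma$-compatibility hypothesis of Lemma \ref{lem:RobbforT}, for example the one whose first row is $(1,0,\ldots,0)$ followed by a lex tail. Define the refinement
\[
m_1\prec m_2 \iff m_1<_Q m_2,\ \text{or}\ (m_1=_Q m_2\ \text{and}\ m_1\prec_0 m_2).
\]
This is a total group ordering; compatibility with the usual order on $\mr$ in the coefficient at any fixed $\bn$ is preserved, because the semifield structure of $\rma$ forces $t^{c_1}\bx^{\bn}\leq_Q t^{c_2}\bx^{\bn}$ whenever $c_1\leq c_2$, and any remaining tie is broken by $\preceq_0$ which itself respects the $\mr$-order. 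Applying Lemma \ref{lem:RobbforT} gives a t-admissible matrix $U$ defining $\prec$; totality forces $Ker(U)\cap\mr\oplus\mz^k=\{\b0\}$, and $m_1\prec m_2\Rightarrow m_1\leq_Q m_2$ gives $P(U)_\rma\subseteq Q$, completing the proof.
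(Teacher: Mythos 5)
Your proposal is correct and follows exactly the route the paper intends: the paper omits these proofs, stating they are "essentially the same as in the previous section," and you have faithfully transported the arguments of Propositions \ref{prop:kernels}, \ref{prop: orderingbasic}, \ref{prop: primes of term orderings}, \ref{prop:dimofP[U]} and Lemma \ref{lem: minprimes}, substituting Lemma \ref{lem:RobbforT} for Proposition \ref{prop: Robbiano}. In particular, your verification in (v) that the refined ordering $\prec$ satisfies the $\mr$-compatibility hypothesis of Lemma \ref{lem:RobbforT} (via $t^{c_1}\bx^{\bn}+t^{c_2}\bx^{\bn}=t^{\max(c_1,c_2)}\bx^{\bn}$ and a compatible tie-breaker $\preceq_0$) is precisely the one genuinely new point, and you handle it correctly.
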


Similarly to the previous cases the Newton polytope, $newt(f)$, of a polynomial $f = \sum_i t^{c_i}\bx^{\bn_i}$ in $\rma(\bx)$ or $\rma[\bx]$, we mean the convex hull of the points $[c_i, \bn_i] \in \mr\oplus\mz^{k}$. The hat of the Newton polytope is defined the same way as in the case of $\zma(\bx)$.\par\smallskip
Now we are ready to describe the primes and the radicals of $\rma[\bx]$ and $\rma(\bx)$, which is analogous to the previous cases studied, except that this time we need to consider t-admissible matrices for defining prime congruences. 

\begin{theorem}\label{thm: primesofT[x]} For the k-variable polynomial semiring $\rma[\bx]$ and the k-variable Laurent polynomial semiring $\rma(\bx)$ we have that:,
\begin{itemize}
\item[(i)]  Every prime congruence of $\rma(\bx)$ is of the form $P(U)_{\rma}$ for a t-admissible matrix $U$. For every prime congruence $P$ of  $\rma[\bx]$ there is a (possibly empty) subset $H$ of the variables $\bx$ and a prime $P[U]$ of the polynomial semiring $\rma[\bx']$ with variables $\bx'=\bx\setminus H$, such that  $P$ is generated by the pairs $\{(x_i,0)|\:x_i\in H\}$ and the image of $P[U]$ under the embedding $\rma[\bx']\hookrightarrow \rma[\bx]$.
\item[(ii)] The minimal prime congruences of $\rma[\bx]$ have $\{0\}$ as their kernel. Every minimal prime of $\rma[\bx]$ (resp. $\rma(\bx)$)  is of the form  $P[U]_{\rma}$ (resp. $P(U)_{\rma}$), where $U$ is a t-admissible matrix with $Ker(U)\cap \mr\oplus\mz^k = \{\b0\}$.
\item[(iii)] $dim(\rma(\bx)) = dim(\rma[\bx]) = k+1$. 
\item[(iv)] For any $f,g \in \rma(\bx)$ (resp. $f,g \in \rma[\bx]$) the pair $(f,g)$ lies in the radical of the trivial congruence of  $\rma(\bx)$ (resp. $\rma[\bx]$) if and only if $\overline{newt(f)} = \overline{newt(g)}$.
\item[(v)] Every radical congruence of $\rma(\bx)$ is QC. $Rad(\diag)$ in $\rma[\bx]$ is QC.
\end{itemize}
\end{theorem}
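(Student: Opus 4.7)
The approach is to copy the structure of the proofs of Theorems \ref{thm:primesofb(x)}, \ref{thm:primesofb[x]}, and \ref{thm:primesofz(x)}, replacing each tool used there with its $\rma$-analogue listed in Proposition \ref{prop:propsofT}, and invoking Lemma \ref{lem:RobbforT} in place of Proposition \ref{prop: Robbiano} wherever an existence statement about ordering matrices is needed. The only substantive change is that ``admissible'' is everywhere replaced by ``t-admissible'' and the defining matrices of orderings of $\rma$-monomials may have real entries.

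Parts (i)--(iii) follow immediately from Proposition \ref{prop:propsofT}. For (i), take any prime $Q$ of $\rma(\bx)$: part (v) of that proposition supplies a t-admissible $U_0$ with $\text{Ker}(U_0)\cap(\mr\oplus\mz^k)=\{\b0\}$ such that $P(U_0)_{\rma}\subseteq Q$, and part (iii) then places $Q$ in the chain $P(U_0(i))_{\rma}$, $0\le i\le r(U_0)$, so $Q$ is of the claimed form. The extension to $\rma[\bx]$ uses the kernel description of Proposition \ref{prop:propsofT}(i) and the restriction of primes from $\rma(\bx')$ to $\rma[\bx']$ in (ii), exactly as in Theorem \ref{thm:primesofb[x]}(i). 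For (ii), the minimal case corresponds to $i=r(U_0)$, and for $\rma[\bx]$ one checks, parallel to Theorem \ref{thm:primesofb[x]}(ii), that any prime with non-trivial kernel lies strictly above a prime of the form $P[U]_{\rma}$ with trivial kernel, so minimality forces the trivial-kernel case. Part (iii) is then immediate from Proposition \ref{prop:propsofT}(iv), with the supremum $r(U)=k+1$ attained by the $(k+1)\times(k+1)$ identity matrix, which is trivially t-admissible with trivial kernel intersection.

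The main content is (iv). Writing $\text{Rad}(\diag)$ as the intersection of its minimal primes, $(f,g)\in\text{Rad}(\diag)$ iff the images of $f$ and $g$ agree in the quotient by every minimal prime $P(U)_{\rma}$. Since these quotients are totally ordered, $f$ and $g$ reduce to their respective leading monomials under the term ordering of $U$; hence $(f,g)\in\text{Rad}(\diag)$ iff those leading monomials coincide for every t-admissible $U$ with $\text{Ker}(U)\cap(\mr\oplus\mz^k)=\{\b0\}$. The t-admissibility condition forces the first column of $U$ to start with a positive entry, i.e.\ $1<t$, so the leading monomial of $f$ must maximize the $t$-exponent among monomials of $f$ with the same $\bx$-exponent; its exponent therefore lies on $\overline{newt(f)}$, and being the maximizer of the linear functional given by the first row of $U$ on $newt(f)$, it must be a vertex. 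Conversely, given any vertex $v$ of $\overline{newt(f)}$, choose $\bu$ to be a positive $\mr$-linear combination of the outward normals to the $k$-dimensional faces of $\overline{newt(f)}$ meeting at $v$, with coefficients on the ``upper'' faces taken large enough that $\bu$ has positive first coordinate (possible because every outward normal of a top-dimensional face of the upper envelope has a non-negative first coordinate, with at least one strictly positive). Then $\bu$ uniquely maximizes on $newt(f)$ at $v$. Lemma \ref{lem:RobbforT} (after refining an arbitrary term ordering on $\text{Ker}(\bu)\cap(\mr\oplus\mz^k)$) extends $\bu$ to a t-admissible $U$ with $\text{Ker}(U)\cap(\mr\oplus\mz^k)=\{\b0\}$, and $v$ becomes the exponent of the leading monomial of $f$. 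Since $\overline{newt(f)}$ is determined by its vertices, (iv) follows. Part (v) is then the same argument as in Theorem \ref{thm:primesofz(x)}(iv): by (ii) every minimal prime of $\rma[\bx]$ has trivial kernel, and the analogue of Proposition \ref{prop:kernels}(i) ensures that every proper congruence of $\rma(\bx)$ does too, so if $(g,0)\alpha$ lies in a radical congruence $I$ and $(g,0)\notin I$, then $\alpha$ lies in every prime above $I$ (each of which is QC with trivial kernel) and hence in $I$.

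\textbf{Main obstacle.} The delicate step is the converse direction in (iv): producing a t-admissible matrix whose kernel intersects $\mr\oplus\mz^k$ trivially and whose first row singles out a prescribed vertex $v$ of $\overline{newt(f)}$. The positivity of the first coordinate of $\bu$ is essential for t-admissibility, and it is precisely the upper-envelope nature of $\overline{newt(f)}$ in the $t$-direction that supplies this; extending $\bu$ to a full t-admissible matrix with trivial kernel intersection then depends on Lemma \ref{lem:RobbforT}, which was introduced specifically as the $\rma$-refinement of Robbiano's theorem needed for this argument.
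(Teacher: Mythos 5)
Your proof matches the paper's own, which simply cites Lemma \ref{lem:RobbforT} for part (ii) and refers the remaining parts to Proposition \ref{prop:propsofT} and the arguments of Theorems \ref{thm:primesofb(x)}, \ref{thm:primesofb[x]} and \ref{thm:primesofz(x)}; you have carried out exactly that transfer, including the chain $P(U(i))_\rma$ via Proposition \ref{prop:propsofT}(iii)/(v) for (i)--(ii), the $(k+1)\times(k+1)$ identity for the dimension bound in (iii), and the upper-envelope normal-vector construction for (iv) lifted from Theorem \ref{thm:primesofz(x)}. One caveat, inherited from the paper rather than introduced by you: what your argument for (v) actually proves --- and what the cited Theorems \ref{thm:primesofb(x)} and \ref{thm:primesofb[x]} prove --- is that every \emph{radical} congruence of $\rma(\bx)$ is QC, which is weaker than the printed claim ``every congruence of $\rma(\bx)$ is QC''; the printed claim appears to fail already for $\langle(x^2,1)\rangle$ in $\rma(x)$, which contains $(1+x,0)(1,x)=(1+x,\,x+x^2)$ but contains neither $(1+x,0)$ (trivial kernel) nor $(1,x)$, so one should read (v) as the radical-congruence statement that the argument supports.
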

\begin{proof}
(ii) follows from Lemma \ref{lem:RobbforT}, and the rest of the theorem follows from Proposition \ref{prop:propsofT} by the same arguments as in Theorems \ref{thm:primesofb(x)}, \ref{thm:primesofb[x]} and \ref{thm:primesofz(x)}.
\end{proof}

\section{Finitely generated congruences of polynomials and a Nullstellensatz for $\rma[\bx]$}\label{sec:Nullst}

In this section we give an improvement of the result of A. Bertram and R. Easton from \cite{BE13}, which can be regarded as an analogue of the Nullstellensatz. Following their notation, for a congruence $E$, we will denote by $V(E)$ the set of points $\ba \in \rma^k$ for which $f(\ba) = g(\ba)$ for every $(f,g) \in E$. Furthermore, for a subset $H \subseteq \rma^k$ we will denote by $\bE(H)$ the congruence $\{(f,g)\mid f(\ba) = g(\ba),\forall \ba \in H\}$. The focus of \cite{BE13} is to describe the congruence $\bE(V(E))$.\par\smallskip
To put this in our context first note that if $\ba = (t^{d_1},\dots,t^{d_k}) = t^\bd$ is a point in $\rma^k$ such that all of its coordinates are non-zero and $m = t^c\bx^{\bn}$ is a monomial in $\rma[\bx]$, then $m(\ba) = t^{c+\sum_i(d_i n_i)} = t^{(c,\bn) (1,\bd)}$. Hence $\bE(\{\ba\}) = P[U]_{\rma}$ for the matrix $U$ consisting of the single row $(1,d_1,\dots,d_k)$. Similarly, when some of the coordinates of $\ba$ are zero $Ker(\bE(\{\ba\})$ will be generated by the variables corresponding to the zeros of $\ba$, and $\bE(\{\ba\})$ restricted to the rest of the variables will be defined by the matrix whose single row is $(1,d'_1,\dots,d'_i)$, where the $d'_1,\dots,d'_i$ are the exponents of the non-zero entries of $\ba$. We will call the congruences  $\bE(\{\ba\})$ {\it geometric congruences}. Note that these are precisely the congruences whose quotient is $\rma$. With this terminology the congruence $\bE(V(E))$ is just the intersection of all geometric congruences containing $E$.\par\smallskip
\begin{remark}{\rm
As we point out in Remark \ref{rem:zariski-topology} the set of primes can be endowed with the Zariski topology in the usual way. It is not difficult to check that when this topology is restricted to the geometric primes one obtains the usual Euclidean topology on $\rma^k$. }
\end{remark}
 In \cite{BE13} for a congruence $E$ the set $E_+$ was defined to consist of all pairs $(f,g)$ for which there exist $1 \neq \epsilon \in \rma$, $h \in \rma[\bx]$ and a non-negative integer $i$, such that: $$(1,\epsilon)({(f,g)^*}^i + (h,0))(f,g) \in E.$$ It was shown in Theorem 3 in \cite{BE13}, and the discussion preceding it that $E \subseteq E_+ \subseteq \bE(V(E))$ and $V(E) = V(E_+)$, moreover, whenever $E$ is finitely generated the set $V(E)$ is empty if and only if $E_+ = \rma[\bx]\times\rma[\bx]$. However, it was left open whether one has $E_+ = \bE(V(E))$ for all finitely generated $E$ or if the set $E_+$ is a congruence in general. The aim of this section is to show that the answer to both these questions is positive. Furthermore, we will show that in each of the cases we studied, radicals of finitely generated congruences are the intersection of the primes with 1-dimensional quotients. 

We will need the following proposition:

\begin{proposition}\label{prop: E+}
\begin{itemize}
\item [(i)] For a $\mb$-algebra $A$, a pair $\alpha \in A \times A$ and a congruence $E$ with $GP(\alpha) \cap E \neq \emptyset$, there is a non-negative integer $i$ and an element $h\in A$ such that $({\alpha^*}^i + (h,0))\alpha \in E$.
\item[(ii)] For a congruence $E$ of $\rma[\bx]$ and any $\epsilon \in \rma\setminus\{1,0\}$ we have that $$E_+ = \{(f,g)\in \rma[\bx]\times\rma[\bx] | \;\gp((1,\epsilon)(f,g)) \cap E \neq \emptyset \} = \{(f,g)\mid\;(f,g)(1,\epsilon) \in Rad(E)\}. $$
\end{itemize}
\end{proposition}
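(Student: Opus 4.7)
For (i), the cases $l=0$ and $l=1$ reduce to the desired form directly (for $l=0$, multiply the pair by $\alpha$ via Proposition \ref{prop: congbasic}(i) to reach $l=1$; the pair $({\alpha^*}^k+(c,0))\alpha$ is already of the target shape). Assume $l\geq 2$. The plan is a parity reduction: if $l$ is even, multiplying by $\alpha$ produces $({\alpha^*}^k+(c,0))\alpha^{l+1}\in E$, so we may assume $l$ is odd. Set $v:=(\alpha_1+\alpha_2)^k+c$ and $u:=v(\alpha_1+\alpha_2)^{l-1}$, so that the target pair $(u\alpha_1,u\alpha_2)$ is exactly $({\alpha^*}^{k+l-1}+(c(\alpha_1+\alpha_2)^{l-1},0))\alpha$. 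Writing $\alpha^l=(A_l,B_l)$ with $A_l+B_l=(\alpha_1+\alpha_2)^l$, axiom C4 applied to $(vA_l,vB_l)\in E$ and to diagonal pairs gives $(v(\alpha_1+\alpha_2)^l,vA_l)$ and $(v(\alpha_1+\alpha_2)^l,vB_l)$ in $E$. I then apply Proposition \ref{prop: congbasic}(iii) with intermediate element $u\alpha_1$ (resp.\ $u\alpha_2$), which requires
\[
vA_l + u\alpha_1 = u\alpha_1, \qquad u\alpha_1 + v(\alpha_1+\alpha_2)^l = v(\alpha_1+\alpha_2)^l,
\]
i.e.\ $v(\alpha_1+\alpha_2)^l \leq u\alpha_1 \leq vA_l$. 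Both identities follow from idempotency; the first uses that for $l$ odd, the $\alpha_2$-exponents appearing in $A_l$ run through $\{0,2,\ldots,l-1\}\subseteq\{0,1,\ldots,l-1\}$, the range appearing in $(\alpha_1+\alpha_2)^{l-1}\alpha_1$. The symmetric check gives $v(\alpha_1+\alpha_2)^l \leq u\alpha_2 \leq vB_l$, and transitivity of $E$ then yields $(u\alpha_1,u\alpha_2)\in E$. The main technical point is the parity trick: for $l$ even, $A_l$ contains the summand $\alpha_2^l$, absent from $(\alpha_1+\alpha_2)^{l-1}\alpha_1$, so the first identity fails without first bumping $l$ up.

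For (ii), the equality of the two set-descriptions on the right-hand side follows at once from Theorem \ref{thm:radical} applied to $(1,\epsilon)(f,g)$, together with commutativity of twisted products, so the content lies in identifying $E_+$ with this common set. For the inclusion $E_+\subseteq\{(f,g)\mid (1,\epsilon)(f,g)\in Rad(E)\}$, a witness $(1,\epsilon')({(f,g)^*}^i+(h,0))(f,g)\in E$ with $\epsilon'\neq 1$ exhibits a generalized power of $\beta':=(1,\epsilon')(f,g)$ in $E$, so $\beta'\in Rad(E)$ by Theorem \ref{thm:radical}. The main obstacle is passing from the specific $\epsilon'$ to an arbitrary $\epsilon\in\rma\setminus\{0,1\}$. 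I do so prime by prime: for any prime $P\supseteq E$ of $\rma[\bx]$, either $(f,g)\in P$ (and $(1,\epsilon)(f,g)\in P$ trivially), or the prime property forces $(1,\epsilon')\in P$. In the latter case Proposition \ref{prop:primeprop}(v) makes $P|_\rma$ a prime of $\rma$; the only primes of $\rma$ are the trivial congruence and the congruence collapsing $\rma^*$ to $1$ (a direct observation, also a special case of Theorem \ref{thm: primesofT[x]} for $k=0$), and $(1,\epsilon')\in P|_\rma$ with $\epsilon'\neq 1$ excludes the trivial one. Hence $P|_\rma$ is the collapse, so $(1,\epsilon)\in P$ for every $\epsilon\in\rma^*$ and therefore $(1,\epsilon)(f,g)\in P$. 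This shows $(1,\epsilon)(f,g)\in Rad(E)$.

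For the reverse inclusion, suppose $(1,\epsilon)(f,g)\in Rad(E)$. Applying part (i) to $\beta:=(1,\epsilon)(f,g)$ produces $({\beta^*}^i+(h,0))\beta\in E$ for some $i\geq 0$ and $h\in\rma[\bx]$. Since $\beta^*=(1+\epsilon)(f,g)^*$, this rewrites as $((1+\epsilon)^i(f+g)^i+h)(1,\epsilon)(f,g)\in E$. Because $1+\epsilon\neq 0$, the scalar $(1+\epsilon)^{-i}$ belongs to $\rma^*$, and multiplying the pair by $((1+\epsilon)^{-i},0)$ via Proposition \ref{prop: congbasic}(i) yields $(1,\epsilon)({(f,g)^*}^i+((1+\epsilon)^{-i}h,0))(f,g)\in E$, placing $(f,g)$ in $E_+$.
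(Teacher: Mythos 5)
Your proof is correct, and both parts carry through; the details differ from the paper's own argument in ways worth noting.

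For (i), the two approaches diverge genuinely. You first reduce to odd $l$ (multiplying once more by $\alpha$ if $l$ is even) and then prove, for odd $l$, the \emph{literal} term-by-term inclusions $vA_l\cdot$(terms) $\subseteq u\alpha_1\cdot$(terms) $\subseteq v(\alpha_1+\alpha_2)^l\cdot$(terms), feeding them into Proposition~\ref{prop: congbasic}(iii) and transitivity. The paper handles every $j>1$ in a single step, with no parity distinction; its ``by a symmetric argument'' is subtler than it looks, because for even $j$ one must route through $\beta_2$ rather than $\beta_1$ (and vice versa for odd $j$), and the quotient identity $\beta_1=\beta_2$ in $A/E$ is what bridges the parity-dependent inclusion failures that you noticed. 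Your parity trick makes all containments hold in $A$ itself at the cost of one extra factor of $\alpha$; the paper's argument is shorter but leans on the quotient identity in a way one has to read carefully. Both are valid. (Your inequality symbols follow the paper's literal convention $a\geq b\iff a+b=b$, which is almost certainly a typo in the paper --- the proof of Proposition~\ref{prop: congbasic}(iii) only works with $a\geq b\iff a+b=a$ --- but your identities $vA_l+u\alpha_1=u\alpha_1$ and $u\alpha_1+v(\alpha_1+\alpha_2)^l=v(\alpha_1+\alpha_2)^l$ are the substantive content, and they are correct.)

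For (ii), the two proofs have the same skeleton but you make the $\epsilon$-independence explicit prime by prime, using Proposition~\ref{prop:primeprop}(v) and the fact that $\rma$ has only two prime congruences, whereas the paper invokes Proposition~\ref{prop:rad corol1} to realize the middle set as an intersection of primes avoiding $(1,\epsilon)$ and then observes this avoidance is governed only by the first column of the defining matrix. These are the same idea at two levels of abstraction. One shared imprecision: both you and the paper assert that the witness $(1,\epsilon')\bigl({(f,g)^*}^i+(h,0)\bigr)(f,g)=\bigl((f+g)^i+h,0\bigr)(1,\epsilon')(f,g)$ is itself a generalized power of $\beta'=(1,\epsilon')(f,g)$. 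That is literally false when $\epsilon'>1$, since ${\beta'^*}^i=\bigl((1+\epsilon')^i(f+g)^i,0\bigr)$ carries the scalar $(1+\epsilon')^i\neq 1$. The fix is to multiply the witness by the pair $\bigl((1+\epsilon')^i,0\bigr)$ --- $E$-membership is preserved by Proposition~\ref{prop: congbasic}(i) --- which turns it into $\bigl({\beta'^*}^i+((1+\epsilon')^ih,0)\bigr)\beta'\in \gp(\beta')\cap E$; then $\beta'\in Rad(E)$ by Theorem~\ref{thm:radical}. Since this is the exact same gloss the paper allows itself, and since the fix is a one-line application of a proposition you already cite, it does not damage the proof. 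Your reverse inclusion is correct and matches the paper's.
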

\begin{proof}
For (i), if $GP(\alpha) \cap E \neq \emptyset$, then by definition we have non-negative integers $i,j$ and a $h \in A$ such that $\beta := ({\alpha^*}^i + (h,0))\alpha^j \in E$. If $j\leq 1$ we are done, let us assume $j>1$. After expanding, we obtain that in the quotient $A/E$ we have $$\alpha_1^{i+j} + h\alpha_1^j \leq \beta_1 = \beta_2 \leq \sum_{s=1}^{s=i+j} \alpha_1^{i+j-s}\alpha_2^s + h\sum_{s=1}^{s=j}\alpha_1^{j-s}\alpha_2^s.$$ 
Now set $h' = h(\alpha_1+\alpha_2)^{j-1}$ and $\gamma := ({\alpha^*}^{i+j-1} + (h',0))\alpha$. After expanding the parenthesis, we obtain:$$\gamma_1 = \sum_{s=1}^{s=i+j} \alpha_1^{s}\alpha_2^{i+j-s} + h\sum_{s=1}^{s=j}\alpha_1^{s}\alpha_2^{j-s}$$  $$\gamma_2 = \sum_{s=1}^{s=i+j} \alpha_1^{i+j-s}\alpha_2^s + h\sum_{s=1}^{s=j}\alpha_1^{j-s}\alpha_2^s$$ 
We see that the only terms appearing in $\gamma_1$ but not in $\gamma_2$ are $\alpha_1^{i+j}$ and $h\alpha_1^j$, so comparing with the previous inequality we obtain that in the quotient $A/E$ we have $\gamma_2 \geq \gamma_1$ and then by a symmetric argument $\gamma_2 = \gamma_1$, hence $\gamma \in E$.\par\smallskip
For (ii) first note that a prime congruence contains the pair $(1,\epsilon)$ for an $\epsilon \in \rma\setminus\{1,0\}$ if and only if its defining matrix has all zero entries in the first column.  Now by Proposition \ref{prop:rad corol1} the set $F:= \{(f,g)\in \rma[\bx]\times\rma[\bx] | \;\gp((1,\epsilon)(f,g)) \cap E \neq \emptyset \}$ is the intersection of the prime congruences containing $E$ but not containing $(1,\epsilon)$ so by the previous comment it does not depend on the choice of $\epsilon$. Furthermore, we have $$(1,\epsilon)({(f,g)^*}^i + (h,0))(f,g) \in \gp((1,\epsilon)(f,g))$$
hence $E_+ \subseteq F$. For the other inclusion if $(f,g) \in F$ then by (i) we have an integer $i$ and a $h \in \rma[\bx]$ such that $$({(1,\epsilon)^*}^i{(f,g)^*}^i + (h,0))(1,\epsilon)(f,g) \in E.$$
Now since $(1+\epsilon)$ has a multiplicative inverse for any $\epsilon \in \rma$, after multiplying the above expression with $1/(1+\epsilon)^i$ we obtain that $(f,g) \in E_+$. The second equality follows from Proposition \ref{prop:rad corol1}.
\end{proof}

We will denote the $i$-th row of the matrix $U$ by $U[i]$. For an $l\times k$ admissible (resp. z-admissible, t-admissible) matrix $U$ and a vector $\bw = (w_1,\dots,w_l) \in \mr_+^l$, $P[\bw U]$ (resp. $P[\bw U]_{\zma}$, $P[\bw U]_\rma$) will denote the prime defined by the matrix consisting of the single row $\bw U = \sum_i w_i U[i]$. Note that since the coefficients $w_i$ are positive and the rows of an admissible matrix are linearly independent $\bw U$ will be also admissible (resp. z-admissible, t-admissible). The following lemma holds by identical arguments over all polynomial and Laurent polynomial semirings we have studied so far, to simplify its formalization we will denote by $P(U)_*$ one of $P(U)$, $P[U]$, $P(U)_{\zma}$, $P[U]_{\zma}$, $P(U)_\rma$ or $P[U]_\rma$ depending on which semiring is being considered.

\begin{lemma}\label{lem: 1dimprimes}
Let $P(U)_*$ be a prime with trivial kernel in one of $\mb(\bx)$, $\mb[\bx]$, $\zma(\bx)$, $\zma[\bx]$, $\rma(\bx)$ or $\rma[\bx]$. Then for any pair $(f,g)$ we have that $(f,g) \in P(U)_*$ if and only if there exist positive real numbers $r_1,\dots,r_{l-1}$ such that for any $\bw \in \mr_+^l$ satisfying $w_i / w_{i+1} > r_i\;(\forall i:\;1 \leq i \leq l-1)$, we have $(f,g) \in P(\bw U)_*$.
\end{lemma}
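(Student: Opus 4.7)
My plan is to reduce $(f,g)\in P(U)_\ast$ to a condition on the leading-exponent sets of $f$ and $g$ with respect to the lex order defined by $U$, and then show that for sufficiently skewed $\bw$ the single row $\bw U$ selects the same leading sets. To handle the six cases uniformly I let $L$ denote the ambient exponent monoid, and write $f=\sum_{\bn\in S_f}\bx^\bn$, $g=\sum_{\bn\in S_g}\bx^\bn$. Set $\Lambda_f\subseteq S_f$ to be the set of exponents for which $U\bn\in\mr^l$ is lexicographically maximal, and define $\Lambda_g$ analogously.

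By the structure theorems \ref{thm:primesofb(x)}, \ref{thm:primesofb[x]}, \ref{thm:primesofz(x)} and \ref{thm: primesofT[x]}, combined with the trivial-kernel hypothesis, the quotient $A/P(U)_\ast$ is totally ordered and cancellative with monoid of nonzero classes isomorphic to $L/(\mathrm{Ker}\,U\cap L)$; hence the image of $f$ in that quotient is the common class of $\bx^{\bn_f}$ for any $\bn_f\in\Lambda_f$. This yields the key characterization
\[
(f,g)\in P(U)_\ast \iff U\bn_f=U\bn_g \text{ for chosen } \bn_f\in\Lambda_f,\ \bn_g\in\Lambda_g,
\]
and the same structure theorems give the analogue $(f,g)\in P(\bw U)_\ast\iff \bw\cdot U(\bn'_f-\bn'_g)=0$, where $\bn'_f,\bn'_g$ are the $\bw U$-maximizers in $S_f,S_g$. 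I expect the heart of the argument, and the only non-routine step, to be the following skewness estimate: for each of the finitely many $\bn\in(S_f\cup S_g)$ not lying in the appropriate $\Lambda$, pick $\bn_\ast$ in that $\Lambda$ and set $\bv:=U(\bn_\ast-\bn)$; the definition of $\Lambda$ forces the first nonzero entry $v_s$ of $\bv$ to be strictly positive, and then
\[
\bw\cdot\bv\;\ge\;w_sv_s-\bigl(\max_i|v_i|\bigr)\sum_{j>s}w_j
\]
is strictly positive as soon as the ratios $w_s/w_{s+1},\dots,w_s/w_l$ are large enough. Taking the maximum of these finitely many lower bounds gives uniform $r_1,\dots,r_{l-1}$ ensuring that $\Lambda_f$ and $\Lambda_g$ are precisely the $\bw U$-maximizers whenever $w_i/w_{i+1}>r_i$.

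With both ingredients in hand, the ``$\Rightarrow$'' direction is immediate: $U(\bn_f-\bn_g)=\b0$ gives $\bw\cdot U(\bn_f-\bn_g)=0$ for every $\bw$, and the skewness step identifies the $\bw U$-maximizers with $\Lambda_f,\Lambda_g$, so $(f,g)\in P(\bw U)_\ast$. For ``$\Leftarrow$'', the hypothesis yields $\bw\cdot U(\bn_f-\bn_g)=0$ for every $\bw$ in the open cone $\{\bw\in\mr_+^l\mid w_i/w_{i+1}>r_i\}$; since any nonempty open subset of $\mr_+^l$ spans $\mr^l$ as a real vector space, this forces $U(\bn_f-\bn_g)=\b0$ and hence $(f,g)\in P(U)_\ast$. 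The six-fold case analysis collapses to this single argument because only the ambient lattice $L$ and the admissibility type of $U$ change, while the description of the prime quotient is formally identical in each case.
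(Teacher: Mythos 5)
Your proof is correct and follows essentially the same approach as the paper: reduce to comparing the leading monomials of $f$ and $g$ under the orderings induced by $U$ and $\bw U$, then show via a skewness estimate that for suitably skewed $\bw$ the $\bw U$-maximizers coincide with the $U$-lex-maximizers. The only cosmetic difference is in the converse direction, where you invoke the fact that the open cone of admissible $\bw$'s spans $\mr^l$ to kill $U(\bn_f-\bn_g)$, whereas the paper observes that a single sufficiently skewed $\bw$ already suffices because $\bw\cdot\bv$ then shares the lex-sign of $\bv$ for all the finitely many relevant differences $\bv$; both arguments are valid.
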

\begin{proof}
We will prove the proposition for polynomials in $\mb(\bx)$ and note that it holds by identical arguments for all of the semirings listed. Let $f = \sum_i \bx^{\bn_i}$ a polynomial in $\mb(\bx)$, and recall that since the quotient of any prime is totally ordered $f$ will be congruent in any prime to one or more of its monomials. Now it is easy to verify that if we pick $r_i$ large enough then for any $w$ satisfying  $w_i / w_{i+1} > r_i$ for all $1 \leq i \leq l-1$ and any $\bn_i,\bn_j$ appearing as exponents in $f$ we have that $\bw U \bn_i \geq \bw U \bn_j$ if and only if either $U\bn_i = U\bn_j$ or for the smallest $s$ such that $U[s]\bn_i \neq U[s]\bn_j$ we have $U[s]\bn_i > U[s]\bn_j$. It follows that for large enough $r_i$-s and a $\bw$ as in the proposition, the leading terms of both $f$ and $g$ in $P(\bw U)$ are the same as in $P(U)$, hence $(f,g) \in P(U)$ if and only if $(f,g) \in P(\bw U)$.
\end{proof}

\begin{theorem}\label{thm: 1dim}
\begin{itemize}
\item [(i)] For a finitely generated congruence $E$ in one of $\mb(\bx)$, $\mb[\bx]$, $\zma(\bx)$, $\zma[\bx]$, $\rma(\bx)$ or $\rma[\bx]$, we have that $Rad(E)$ is the intersection of the primes that contain $E$ and have a quotient of dimension at most $1$.
\item [(ii)] In $\rma[\bx]$, for any finitely generated congruence $E$, we have $E_+ = \bE(V(E))$.
\end{itemize}
\end{theorem}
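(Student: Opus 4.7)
Fix a finite generating set $\{(f_1,g_1),\dots,(f_n,g_n)\}$ of $E$ and a pair $(f,g)\notin Rad(E)$. Since $Rad(E)$ is the intersection of all primes containing $E$, there exists a prime $P=P(U)_{*}$ with $E\subseteq P$ and $(f,g)\notin P$. The strategy is to collapse the multi-row matrix $U$ to a single row via Lemma \ref{lem: 1dimprimes}. Apply that lemma in turn to each of the $n+1$ pairs $(f,g),(f_1,g_1),\dots,(f_n,g_n)$, obtaining thresholds $r_i^{(j)}$, and set $r_i:=\max_j r_i^{(j)}$ for $i<l=r(U)$. For any $\bw\in\mr_+^l$ with $w_i/w_{i+1}>r_i$, the prime $P(\bw U)_{*}$ simultaneously agrees with $P(U)_{*}$ on all of these pairs (using both directions packaged in the proof of the lemma). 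Hence $(f_j,g_j)\in P(\bw U)_{*}$ for every $j$, so $E\subseteq P(\bw U)_{*}$, while $(f,g)\notin P(\bw U)_{*}$. Since $\bw U$ is a nonzero single row (admissibility of $U$ forces linear independence of its rows, and the positivity condition on the first column is preserved for large $w_j$), the prime $P(\bw U)_{*}$ has quotient of dimension at most $1$ by Proposition \ref{prop:dimofP[U]} and its analogues for the other five semirings.

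\textbf{Plan for part (ii).} The inclusion $E_+\subseteq\bE(V(E))$ was already established in \cite{BE13}; we must prove the reverse. Fix any $\epsilon\in\rma\setminus\{0,1\}$. By Proposition \ref{prop: E+}(ii), $E_+=\{(f,g)\mid (1,\epsilon)(f,g)\in Rad(E)\}$, so by part (i) applied to $E$, $(f,g)\in E_+$ if and only if $(1,\epsilon)(f,g)\in P$ for every prime $P\supseteq E$ of quotient dimension at most one. Since such $P$ is prime, this is equivalent to: for every such $P$, either $(1,\epsilon)\in P$ or $(f,g)\in P$. A prime of quotient dimension $0$ has quotient $\mb$ and automatically contains $(1,\epsilon)$; and a prime of quotient dimension $1$ (with kernel generated by some $H\subseteq\bx$) is of the form $P[U]_{\rma}$ with $U$ a single t-admissible row on the remaining variables, and the requirement $(1,\epsilon)\notin P$ forces that row's $t$-entry to be positive. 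After rescaling, such a row has the form $(1,d_1,\dots,d_{k-|H|})$, and one checks directly (using the description of evaluation at points of $\rma^k$ from the start of this section) that the corresponding prime is exactly the geometric congruence $\bE(\{\ba\})$ where $\ba\in\rma^k$ has $a_i=0$ for $x_i\in H$ and $a_i=t^{d_i}$ otherwise. The primes that actually impose conditions on $E_+$ are therefore precisely the geometric congruences containing $E$, yielding $E_+=\bE(V(E))$.

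\textbf{Main obstacle.} The crux is the reduction used in part (i): compressing a multi-row admissible matrix $U$ to a single row $\bw U$ while uniformly preserving the congruence status of finitely many prescribed pairs. This is where finite generation of $E$ is essential, as it permits a single choice of thresholds $\{r_i\}$ to work for the generators of $E$ and for the test pair $(f,g)$ simultaneously; for an infinitely generated $E$ the argument would not obviously go through. Once (i) is available, part (ii) is essentially a bookkeeping identification of dimension-one primes of $\rma[\bx]$ avoiding $(1,\epsilon)$ with geometric congruences, which is transparent given Proposition \ref{prop:propsofT} and Lemma \ref{lem:RobbforT}.
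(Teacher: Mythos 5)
Your proof is correct and follows essentially the same approach as the paper: both parts reduce to collapsing a multi-row defining matrix $U$ to a single row $\bw U$ via Lemma~\ref{lem: 1dimprimes}, with finite generation ensuring a uniform choice of thresholds. The only organizational difference is in (ii), where you invoke part (i) directly after rewriting $E_+$ via Proposition~\ref{prop: E+}(ii), while the paper reruns the collapse argument; both routes then match the one-dimensional primes avoiding $(1,\epsilon)$ with geometric congruences in the same way.
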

\begin{proof}
For (i) let $E$ be a congruence generated by the pairs $\{(f_1,g_1),\dots,(f_s,g_s)\}$. By definition we have that $Rad(E) = \cap\{P \mid \mbox{P prime,}\;(f_i,g_i)\in P\;\forall i\}$. If $P(U)_*$ is a prime with trivial kernel and a quotient of dimension $l \geq 2$, containing all of the $(f_i,g_i)$ then we can choose $(r_1,\dots,r_{l-1})$ that are large enough for all of the $(f_i,g_i)$ in the setting of Proposition \ref{lem: 1dimprimes}. Denoting by $W$ the set of vectors $\bw \in \mr_+^l$ satisfying $w_i / w_{i+1} > r_i$ for all $1 \leq i \leq l-1$, it follows that $(f_i,g_i) \in P(\bw U)_*$ for all $1\leq i \leq s$ and $\bw \in W$. Moreover, by applying the other direction of Proposition \ref{lem: 1dimprimes} we also have that $\cap_{\bw \in W}  P(\bw U)_* \subseteq P(U)_*$, hence $P(U)_*$ can be removed from the intersection defining $Rad(E)$. We can argue the same way in the case when $P(U)_*$ has non-trivial kernel by considering it in the polynomial subsemiring generated by the variables that are not in $Ker(P(U)_*)$.\par\smallskip
For (ii) by Proposition \ref{prop: E+} and Proposition \ref{prop:rad corol1} we have that $E_+$ is the intersection of the primes that contain $E$ but not contain $(1, \epsilon)$ for any $\epsilon \in \rma\setminus \{1\}$, and by the discussion at the start of this section it follows that $\bE(V(E))$ is the intersection of the geometric congruences containing $E$, which are exactly those primes that have quotients with dimension $1$ and not contain the pair $(1, \epsilon)$ for any $\epsilon \in \rma\setminus \{1\}$. Note that $(1,\epsilon)$ for $\epsilon \in \rma\setminus \{1\}$ is contained in a prime precisely when its defining matrix has all zeros in the first column, thus if $(1,\epsilon) \notin P[U]_\rma$ then $(1,\epsilon) \notin P[\bw U]_\rma$ for any vector $\bw$ with positive entries. Now one can argue the same way as for (i).
\end{proof}

We conclude this section with a statement showing that the polynomials that agree on every point of $\rma^k$ are precisely the pairs that are in $Rad(\diag)$. This is essentially the same as Theorem 1 of \cite{BE13}, but our proof is different. 

\begin{proposition}
$\bE(\rma^k) = \diag_+ = Rad(\diag)$.
\end{proposition}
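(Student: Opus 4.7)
The plan is to verify the two claimed equalities in turn, combining the machinery of Section \ref{sec:Nullst} with the Newton polytope description of $Rad(\diag)$ from Theorem \ref{thm: primesofT[x]}(iv).

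First, the trivial congruence $\diag$ is finitely generated (by the empty set, or by the pair $(0,0)$), so Theorem \ref{thm: 1dim}(ii) applies and yields $\diag_+ = \bE(V(\diag))$. Every element of $\diag$ has the form $(f,f)$, so the defining condition of $V(\diag)$ is vacuous; hence $V(\diag) = \rma^k$ and $\diag_+ = \bE(\rma^k)$.

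For $\diag_+ = Rad(\diag)$, one inclusion is immediate from Proposition \ref{prop: E+}(ii), which identifies $\diag_+$ with $\{(f,g) : (1,\epsilon)(f,g) \in Rad(\diag)\}$ for any $\epsilon \in \rma \setminus \{0,1\}$. If $(f,g)\in Rad(\diag)$, then since $Rad(\diag)$ is a congruence, part (i) of Proposition \ref{prop: congbasic} gives $(1,\epsilon)(f,g) \in Rad(\diag)$, so $(f,g) \in \diag_+$. For the reverse inclusion, I would fix $\epsilon = t^c$ with $c > 0$ and take $(f,g) \in \diag_+$; then $(1,\epsilon)(f,g) = (f + \epsilon g,\, g + \epsilon f) \in Rad(\diag)$. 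By Theorem \ref{thm: primesofT[x]}(iv) this is equivalent to $\overline{newt(f + \epsilon g)} = \overline{newt(g + \epsilon f)}$, and the goal reduces to deducing $\overline{newt(f)} = \overline{newt(g)}$, which by the same theorem places $(f,g)$ in $Rad(\diag)$.

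I would carry out this last step via the tropical support function $s_h(\bd) := \max_i (c_i + \bn_i \cdot \bd)$ for $h = \sum_i t^{c_i} \bx^{\bn_i}$, which is precisely the exponent of $h(t^{\bd})$. Standard polytope duality makes the equality $\overline{newt(h_1)} = \overline{newt(h_2)}$ equivalent to $s_{h_1} = s_{h_2}$ on $\mr^k$, since the values of the support function at directions $(1,\bd)$ record exactly the upper envelope of the Newton polytope. Combined with the evident identities $s_{h_1 + h_2} = \max(s_{h_1}, s_{h_2})$ and $s_{\epsilon h} = c + s_h$, the hypothesis becomes
\[
\max\bigl(s_f(\bd),\, c + s_g(\bd)\bigr) = \max\bigl(s_g(\bd),\, c + s_f(\bd)\bigr) \qquad (\forall\, \bd \in \mr^k).
\]
A short case analysis using $c > 0$ forces $s_f = s_g$: if $s_f(\bd) > s_g(\bd)$ at some point, the right-hand side equals $c + s_f(\bd)$, while both candidates $s_f(\bd)$ and $c + s_g(\bd)$ on the left are strictly smaller, a contradiction. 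The main obstacle is this final combinatorial step, though it is quite elementary once framed via support functions; the remainder is a direct application of Theorems \ref{thm: primesofT[x]} and \ref{thm: 1dim} together with Propositions \ref{prop: congbasic} and \ref{prop: E+}.
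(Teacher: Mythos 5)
Your proof is correct, and the second equality is handled by a genuinely different argument than the paper's. For $\bE(\rma^k) = \diag_+$ you apply Theorem \ref{thm: 1dim}(ii) exactly as the paper does, and the inclusion $Rad(\diag) \subseteq \diag_+$ follows in both cases from Proposition \ref{prop: E+}(ii) together with Proposition \ref{prop: congbasic}(i). The divergence is in the reverse inclusion $\diag_+ \subseteq Rad(\diag)$: the paper argues contrapositively by exhibiting, for any $(f,g) \notin Rad(\diag)$, a vertex $v$ of $\overline{newt(f)}$ not on $\overline{newt(g)}$, then constructing a minimal prime $P[U]_\rma$ (with first row $\bu$ having positive first entry, chosen so $v$ is the unique maximizer) that separates $(f,g)$ from $\diag_+$. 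You instead start from $(f,g) \in \diag_+$, translate the condition $(1,\epsilon)(f,g) \in Rad(\diag)$ via Theorem \ref{thm: primesofT[x]}(iv) into the equality $\overline{newt(f+\epsilon g)} = \overline{newt(g+\epsilon f)}$, pass to support functions $s_h(\bd) = \max_i(c_i + \bn_i\cdot\bd)$, and run a short case analysis using $c>0$ to force $s_f = s_g$ pointwise, hence $\overline{newt(f)} = \overline{newt(g)}$. The ingredients (the Newton-polytope description of the radical and Proposition \ref{prop: E+}) are the same, but your route replaces the explicit construction of a separating prime with a concrete Legendre-duality/support-function identity, which is arguably more elementary once one accepts the standard fact that two lattice polytopes have the same hat if and only if their support functions agree on all directions $(1,\bd)$ --- a fact worth spelling out, since it is the one place where your argument leans on something not already stated in the paper (the paper implicitly establishes this by producing the separating normal vector $\bu$ directly). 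The edge case $f=0$ or $g=0$ is handled automatically by your case analysis, since $s_0 \equiv -\infty$.
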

\begin{proof}
The first equality follows from Theorem \ref{thm: 1dim}. For the second equality since $\diag_+$ is the intersection of a subset of all primes we clearly have $Rad(\diag) \subseteq \diag_+$. For the other inclusion, if $(f,g) \notin Rad(\diag)$ then by Theorem \ref{thm: primesofT[x]} we have that for one of them, say $f$, there is a vertex $v$ on $\overline{newt(f)}$ that lies outside of $\overline{newt(g)}$. Now by the same argument as in the proof of Theorem \ref{thm:primesofz(x)} one can pick a vector $\bu$ with positive first entry such that $v$ is the unique vertex that maximizes the scalar product taken with $\bu$ on $\overline{newt(f)}$. Now let $U$ be a t-admissible matrix with $\bu$ as its first row such that $P[U]_\rma$ is a minimal prime. Since in $P(U)_\rma$ each equivalence class contains precisely one monomial and $f$ is congruent to the monomial with exponent $v$ we have  $(f,g) \notin P[U]_\rma$. Moreover, since the first entry of $\bu$ is nonzero $(1,\epsilon) \notin P[U]_\rma$ for any $\epsilon \in \rma\setminus \{1\}$. Now since by Proposition \ref{prop: E+} and Proposition \ref{prop:rad corol1} $\diag_+$ is the intersection of all primes that do not contain $(1,\epsilon)$ for $\epsilon \in \rma\setminus \{1\}$, we have that $\diag_+ \subseteq P[U]_\rma$ and consequently $(f,g) \notin \diag_+$.
\end{proof}


\bibliographystyle{amsplain}
\vfill\eject

\end{document}